\documentclass[a4paper]{scrartcl}
\usepackage{amsthm}
\usepackage{amsmath}
\usepackage{amssymb}
\usepackage[colorlinks,linkcolor={red!50!black},citecolor={blue!50!black},urlcolor={blue!80!black}]{hyperref}

\usepackage{enumerate}
\usepackage{authblk}
\usepackage[numbers,square,sort&compress,longnamesfirst]{natbib}
\usepackage{booktabs}
\title{A High-Performance Parallel Algorithm for Multi-Objective Integer Optimization}

\author{Kathrin Prinz$^*$}
\author{Levin Nemesch}
\author{Stefan Ruzika}
\affil{RPTU Kaiserslautern-Landau, Germany\\
$^*$Corresponding author, prinz.kathrin@math.rptu.de}

\usepackage{mathtools,amsmath,amssymb,amsfonts}
\usepackage{hyperref}
\usepackage[capitalize,noabbrev]{cleveref}
\usepackage[caption=false]{subfig}
\usepackage{tikz}
\usetikzlibrary{positioning,shapes.misc,patterns}
\usepackage{pgfplots}

\usepackage{lipsum}
\usepackage{amsfonts}
\usepackage{graphicx}
\usepackage{epstopdf}
\usepackage{algorithmic}
\usepackage{enumitem}
\usepackage[ruled,linesnumbered]{algorithm2e}
\usepackage{nicefrac}

\Crefname{algocf}{Algorithm}{Algorithms}

\theoremstyle{plain}
\newtheorem{theorem}{Theorem}
\newtheorem{remark}[theorem]{Remark}
\newtheorem{lemma}[theorem]{Lemma}
\newtheorem{proposition}[theorem]{Proposition}
\newtheorem{corollary}[theorem]{Corollary}

\theoremstyle{definition}

\newtheorem{assumption}{Assumption}
\newtheorem{definition}[theorem]{Definition}

\newtheorem{example}[theorem]{Example}

\newcommand{\e}{\varepsilon}
\newcommand{\llex}{<_{\text{lex}}}
\newcommand{\emethod}[1]{\Pi(#1)}
\newcommand{\E}[1]{E(#1)}
\newcommand{\clE}[2]{E^{#2}(#1)}
\newcommand{\vc}[1]{(#1^1,\dots,#1^{k-1})}
\newcommand{\MOIP}{\textbf{MOIP}}
\newcommand{\cl}[1]{\mathrm{cl}(#1)}
\newcommand{\Y}{\mathcal{Y}}
\newcommand{\Z}{\mathcal{Z}}
\newcommand{\V}[1]{\mathcal{V}(#1)}
\newcommand{\U}{\mathcal{U}}
\newcommand{\W}{\mathcal{W}}
\newcommand{\Li}{\mathcal{L}}
\newcommand{\T}{\mathcal{T}}
\newcommand{\scion}[2]{\mathrm{scion}^{#2}(#1)}

\definecolor{i1}{RGB}{227,27,76}
\definecolor{i5}{RGB}{6,66,132}
\definecolor{i2}{RGB}{255,162,82}
\definecolor{i4}{RGB}{119,182,186}
\definecolor{i6}{RGB}{76,53,117}
\definecolor{i3}{RGB}{106,178,231}
\definecolor{tree}{RGB}{38,208,124}

\definecolor{dummy}{RGB}{50,50,50}
\DeclareMathOperator*{\lexmin}{\text{lex}\,min}
\DeclareMathOperator*{\arglexmin}{arg\,lex\,min}
\DeclareMathOperator*{\argmin}{arg\,min}
\DeclareMathOperator*{\argmax}{arg\,max}

\newcommand\cuboid[8]{
	\filldraw[draw=#7,fill opacity=#8,fill=#7!20,very thick] (axis cs: #1,#2,#3) -- (axis cs:#1,#2,#6) -- (axis cs:#1,#5,#6) -- (axis cs:#1,#5,#3) -- cycle;
	\filldraw[draw=#7,fill opacity=#8,fill=#7!20,very thick] (axis cs: #1,#2,#3) -- (axis cs: #1,#5,#3) -- (axis cs: #4,#5,#3) -- (axis cs: #4,#2,#3) -- cycle;
	\filldraw[draw=#7,fill opacity=#8,fill=#7!20,very thick] (axis cs: #1,#2,#3) -- (axis cs: #1,#2,#6) -- (axis cs: #4,#2,#6) -- (axis cs: #4,#2,#3) -- cycle;
}
\begin{document}
\maketitle
	\begin{abstract}
    Multi-objective integer optimization problems are hard to solve, mainly because the number of nondominated images is often extremely large. We present the first exact algorithm, called PEA, that fully utilizes the multicore architecture of modern hardware. By exploiting the structure of the parameter set of the underlying scalarization, PEA can use a high number of threads while avoiding the usual pitfalls of parallel computing. It is highly scalable and easy to implement. As a result, PEA can solve much larger instances than previous state-of-the-art algorithms. Besides, PEA has a sound theoretical foundation. Unlike other existing parallel algorithms, it always solves the same number of scalarization problems as comparable sequential algorithms. We demonstrate the potential of PEA in a computational study.
    

	\end{abstract}

\section{Introduction}
Many industrial problems involve multiple, often conflicting objectives. 
Such problems can be modeled as \emph{multi-objective optimization problems}, thus, optimizing all objectives simultaneously.
As a result, there is no single optimal solution, but a whole set of so-called \emph{Pareto optimal} or \emph{efficient} solutions. The corresponding images under the objective functions are called \emph{nondominated}.
Consequently, ''solving`` a multi-objective optimization problem requires computing the entire \emph{nondominated set}~$Y_N$ and imposes a huge computational burden: Most such problems are known to be \emph{intractable}, i.\,e., the number of nondominated images is exponential in the  instances size.

State-of-the-art exact algorithms for multi-objective integer optimization problems work in the \emph{objective space} and iteratively transform the problem into a series of single-objective problems via \emph{scalarizations}, cf.~\cite{Boland2017a,Ozlen2014,Daechert2024}. 
This makes it possible to  use highly efficient single-objective problem solvers such as Gurobi~\cite{GurobiOptimization2023} and CPLEX~\cite{IBM2023}. 
With very few exceptions, such algorithms are sequential.

However, modern processors are not designed for purely sequential computing anymore: 
It is hard to find a processor, even in handheld devices, that does not incorporate multiple cores that can execute operations in parallel.
So far, multi-objective optimization barely takes advantage of this potential. 
Meanwhile, the common practice of solving many single-objective scalarization problems already provides a good basis for parallelization.
Why not enumerate the scalarization problems in a way that allows them to be solved independently and in parallel?
Unfortunately, conventional algorithms such as in~\cite{Daechert2024,Ozlen2014} are not well-suited for this: They require complex data structures to avoid solving unnecessary scalarization problems. Thus, the scalarization problems cannot be solved independently and solving them in parallel requires costly information sharing between threads. 

We propose a shift in paradigm: Algorithm designs for multi-objective optimization should be inherently suited for parallel computing. We demonstrate that it is possible to design such a  parallel algorithm for integer problems --- without the need for complex data structures and with independently running scalarization problems. Threads can operate largely autonomously and without much communication. 
Hereby, a guided search in the parameter space of a suitable scalarization rather than in the objective space achieves superior performance.

\subsection{Literature}
We briefly review relevant literature on exact algorithms for multi-objective integer problems.
For a more extensive general overview of exact algorithms for multi-objective (mixed-)integer problems, we refer to the recent survey by Halffmann et al.~\cite{Halffmann2022}.

Many methods to compute the entire nondominated set for multi-objective integer optimization problems work either by restricting the search  to the part of the image space that may contain undiscovered nondominated images, the so-called \emph{search region}, or by searching through the parameter space defined by a scalarization. 

One possible approach to do this is to remove dominated areas in the image space by introducing additional constraints and variables for each discovered nondominated image. Thus, a method relying on such a description needs to iteratively solve increasingly complex single-objective problems~\cite{KLEIN1982,Sylva2004,Lokman2013}.

Another possibility is to describe the search region by so-called local upper bounds.
Here, a set of bounds limits the search region.
This set is iteratively refined with each new nondominated image that is discovered (cf.~\cite{Daechert2015,Klamroth2015,Daechert2017}). 
Upper bound based methods can be combined with different scalarization, see Tamby and Vanderpooten~\cite{Tamby2020} and D{\"a}chert et al.~\cite{,Daechert2024}.


Methods working in the parameter space include the Quadrant Shrinking Method by Boland et al.~\cite{Boland2017}, the L-Shape Method by Boland et al.~\cite{Boland2016}, the algorithm by Kirlik and Sayın~\cite{Kirlik2014} and the recursive algorithm by Ozlen et al.~\cite{Ozlen2014}.

However, between all these approaches, methods that are specifically designed for parallelization are rare:
Dhaenens et al.~\cite{Daechert2015}, extend the algorithm for bi-objective problems by Lemesre et al.~\cite{Lemesre2007} to any number of objectives. First, the search space is divided into equal areas according to one objective, which can be searched in parallel. Then, the rectangles between neighboring images are explored.
Turgut et al.~\cite{Turgut2019} present an exact parallel objective space decomposition algorithm that exploits regional dominance relations between decomposed partitions for pruning.
 The most competitive  exact parallel method so far has been developed by Pettersson et al.~\cite{Pettersson2020}. They introduce a permutation parallelization technique, whereby each thread starts generating the nondominated set using different permutations of the objective functions and shares generated bounds with the other threads.
The practical viability of their method is demonstrated in a computational study.
The most recent algorithm is the tri-objective Parallel Enumeration Algorithm by Ruzika and Prinz~\cite{Prinz2024}, which show superior performance but is limited to only three objectives.

\subsection{Our Contribution}

We extend the Parallel Enumeration Algorithm (PEA) for tri-objective optimization problems by Prinz and Ruzika~\cite{Prinz2024} to any number of objectives\footnote{Note that parts of this work have also appeared in the PhD thesis by Prinz~\cite{Prinz2026}}.
The resulting algorithm is able to distribute scalarization problems among tasks working in parallel.
Each task is able to independently generate their own follow-up scalarization problems, and different tasks rarely need to communicate.
Our algorithm is the first of its kind for any number of objectives and marks an important milestone for the development of new parallel algorithms in multi-objective optimization. 
Unlike existing parallel algorithms, now matter how many threads are used, the number of scalarization problems solved by PEA is always the same, that is, PEA never does any additional or unnecessary work. PEA does not require any complex data structures, and its implementation is straightforward. In addition, PEA is the first parallel algorithm that solves the same number of scalarization problems as state-of-the-art sequential algorithms, which is in $\mathcal{O}(|Y_N|^{\lfloor\frac{k}{2}\rfloor})$ for problems with $k$ objective functions, cf.~\cite{Klamroth2015}.

The approach of PEA can be seen as follows:
The parameters  which are needed for a lexicographic variant of the epsilon-constraint scalarizations throughout the algorithm can be arranged as a directed tree.
Each parameter represents a vertex in the tree, and the arcs between them represent a partial order.
Traversing such a tree leads to an intuitive parallelization, since the knowledge of the root of a subtree is sufficient to start the traversal of this subtree independently from the rest of the larger tree.
This approach enables PEA to solve multi-objective integer optimization problems of unprecedented size in a short amount of time, which is demonstrated in a computational study.  

This paper is organized as follows:
In \Cref{sec:pre}, we introduce our notation, well-known results and basic concepts.
\Cref{sec:general position} presents our results for the special case of images in general position.
We relate these results to the existing literature on local upper bounds.
Then, in \Cref{sec:not general position}, we generalize these results.
Our novel algorithm is presented in \Cref{sec:alg}.  Finally, the efficacy of the parallelization achieved by PEA is demonstrated by the numerical results presented in  \Cref{sec: numerical study}.

\section{Preliminaries}
\label{sec:pre}
In this section, we introduce concepts from the field of multi-objective optimization that we use in this paper. For a more comprehensive introduction, please refer to the book by Ehrgott~\cite{Ehrgott2005}.
We consider multi-objective integer optimization problems
\[
\begin{array}{ll}\tag{\MOIP}
	\min &\; f(x)=(f_1(x),\dots,f_k(x))^\top \\
	\text{s.\,t.} &\; x\in X
\end{array}
\]
with $n\in\mathbb{N}$ variables and \emph{feasible set} $X\subseteq\mathbb{Z}^n$. The vector-valued objective function $f$ maps each feasible solution $x\in X$ to its \emph{image} $f(x)$.
The \emph{image set} $Y\coloneqq \{f(x) : x \in X\}\subseteq\mathbb{R}^k$ subsumes all possible images. The vector spaces $\mathbb{R}^n$ and $\mathbb{R}^k$ are called the \emph{decision space} and the \emph{image space}, respectively. We use the following notation: For any $y\in \mathbb{R}^k$ and $i\in\{1,\dots,k\}$, let $y_{-i}$ be the projection of $y$ onto $\mathbb{R}^{k-1}$ that excludes the $i$-th component, i.\,e., $y_{-i}\coloneqq (y_1,\dots,y_{i-1},y_{i+1},\dots,y_k)$. Furthermore, for any $i\in\{1,\dots,k\}$, we use $[i]\coloneq\{1,\dots,i\}$, $y_{[i]}=(y_1,\dots,y_{i})$ and $y_{-[i]}\coloneq (y_{i+1},\dots,y_k)$. For $a,b\in \mathbb{R}^k$, we denote the multi-dimension intervals by $[a,b]$, $(a,b)$, etc.

Since there is no canonical ordering in the image space $\mathbb{R}^k$, we utilize component-wise orders to define optimality: For images~\(y,\bar{y}\in\mathbb{R}^k\), the \emph{weak component-wise order}, the \emph{component-wise order}, and the \emph{strict component-wise order} are defined by 
\begin{align*}
	&y\leqq\bar{y} \;\text{if and only if}\; y_i \leq \bar{y}_i\; \text{for all}\; i\in[k], \\
	&y\leq\bar{y} \;\text{if and only if}\; y_i \leq \bar{y}_i\; \text{for all}\;  i\in[k] \; \text{and}\; y \neq \bar{y},\\
	&y<\bar{y} \;\text{if and only if}\; y_i < \bar{y}_i \; \text{for all}\;  i\in[k],
\end{align*}
respectively.
Then, a feasible solution~\(x^*\in X\) is called \emph{efficient} if there does not exist a feasible solution~\(x\in X\) such that \(f(x)\leq f(x^*)\).  The corresponding image \(y^*=f(x^*)\) is called \emph{nondominated}. The set of efficient solutions is called the \emph{efficient set} and denoted by \(X_E\). The set of nondominated images is called the  \emph{nondominated set} and denoted by \(Y_N\).  
In the following, we only consider \MOIP\ instances with finite nondominated set.

\subsection{The Lexicographic Epsilon-Constraint Scalarization}

In the epsilon-constraint scalarization, one of the objectives is minimized while all others are bounded from above and turned into constraints. It was first introduced by Haimes et al.~\cite{Haimes1971}. 

Optimal solutions of epsilon-constraint scalarization problems are not guaranteed to be efficient (they are only guaranteed to be \emph{weakly efficient}, cf.\ Ehrgott~\cite{Ehrgott2005}). In order to obtain efficient solutions of \MOIP, we employ the \emph{lexicographic epsilon-constraint scalarization}: For a given vector $\e\in\mathbb{R}^{k-1}$, the corresponding lexicographic epsilon-constraint scalarization \(\emethod{\e}\) is defined as 

\[
\begin{array}{ll}\tag{$\emethod{\e}$}
	\lexmin & (f_{k}(x),f_{k-1}(x),\dots,f_{1}(x))\\
	\text{s.\,t.} & f_{-k}(x)<\e,\\
	& x \in X. 
\end{array}
\]
We allow $\e_i=\infty$ for $i\in[k-1]$ to indicate that the corresponding objective is unconstrained. For clarity, we fix the ordering of the objectives. 
However, all results hold true for any ordering.
We refer to the image $y\in Y_N$ of an optimal solution $x\in X_E$  of $\emethod{\e}$ as $\emph{optimal}$ for $\emethod{\e}$.
For the remainder of this paper, we assume that we have a black-box solver for $\emethod{\e}$ at hand. 
If appropriate, such a black-box solver can be an integer programming solver like CPLEX~\cite{IBM2023} or Gurobi~\cite{GurobiOptimization2023}.
Note that the strict inequality constraints $f_{-k}(x)<\e$ can be simulated by the constraints $f_{-k}(x) \leqq \e - (\delta,\dots,\delta)$ for some $\delta> 0$. Such a $\delta$ is guaranteed to exist, since the nondominated set is required to be finite. Furthermore, $\delta=1$ is a viable choice if the objective functions assume only integer values. 

It is well-known (cf.\ Laumanns et al.~\cite{Laumanns2006}) that for lexicographic epsilon-constraint scalarization problems the optimal image is always uniquely optimal. In addition, for every nondominated image $y\in Y_N$, there exists an $\e\in\mathbb{R}^{k-1}$ such that $y$ is optimal for $\emethod{\e}$. Therefore, it is possible to obtain all nondominated images by repeatedly solving lexicographic epsilon-constraint scalarization problems for different values of $\e$.

It is important to note that solving a lexicographic epsilon-constraint scalarization problem may require more effort than solving an epsilon-constraint scalarization problem. Therefore, the scalarization problems that are solved by PEA might take longer than the ones of similar methods.
However, the benefit from the parallelization of PEA makes up for this additional effort.

\subsection{Epsilon-Components}
In this section, we introduce \emph{epsilon-components}, cf.~\cite{Prinz2024}, that we use to define an order on the parameters of the lexicographic epsilon-constraint scalarization:  Similarly to weight set components for weighted sum scalarization problems (cf.\ Przybylski et al.~\cite{Przybylski2010}), we associate each nondominated image $y$ with the set of all parameters $\e\in\mathbb{R}^{k-1}$ for which $y$ is optimal for $\emethod{\e}$.

\begin{definition}\label{def:epsilon components}
	For a nondominated image $y\in Y_N$, the \emph{epsilon-component} is defined as 
	$$E(y)\coloneq \left\{\e \in\mathbb{R}^{k-1} : y = \arglexmin_{\bar{y}\in Y_N} \{(\bar{y}_k,\dots,\bar{y}_1) : \bar{y}_{-k}<\e\}\right\}.$$
\end{definition}

The following four-objective example illustrates this concept.
\begin{example}\label{ex:introductory4D}
	We consider $f=\text{id}$ and
	\begin{align*}
		X=Y=\Big\{&y^1=(4,1,2,1)^\top, y^2=(2,4,3,2)^\top, y^3=(1,3,4,3)^\top\Big\}.
	\end{align*}
	Let us first consider the nondominated image $y^1$:
    This image is only feasible for $\emethod{\e}$ if $y^1_{-4}<\e$. Thus, 
	$$ \E{y^1}\subseteq \left\{\e\in\mathbb{R}^3 : y^1_{-4}<\e\right\}=\left\{\e\in\mathbb{R}^3 : (4,1,2)<\e\right\}. $$
	Furthermore, $y^1$ has the smallest fourth objective function value among all nondominated images. Thus, if $y^1$ is feasible for a lexicographic epsilon-constraint scalarization problem, then it is also optimal. Therefore, it holds that
	$$\E{y^1}=\left\{\e\in\mathbb{R}^3 : (4,1,2)<\e\right\}.$$
	
	Similarly, $y^2$ is only feasible for $\emethod{\e}$ if $y^2_{-4}<\e$. However, $y^1$ has a lower fourth objective function value than $y^2$, i.\,e., $y^1_4<y^2_4$. Consequently, if $y^1$ is feasible, then $y^2$ is not optimal. Thus,
	\begin{align*}
		\E{y^2}&=\left\{\e\in\mathbb{R}^3: y^2_{-4} < \e\right\}\backslash\E{y^1}\\
		&= \left\{\e\in\mathbb{R}^3: (2,4,3) < \e\right\} \cap \left\{\e\in\mathbb{R}^3 : (4,1,2)\nless\e\right\}.
	\end{align*}
	Analogously, for any $y\in Y_N$, we get 
	\begin{align} \label{eq:epsilon components} 
		\E{y} = \left\{ \e \in\mathbb{R}^{k-1} : y_{-k} < \e\right\} \backslash \bigcup_{\substack{\bar{y}\in Y_N\\ (\bar{y}_k,\dots,\bar{y}_1)\llex (y_k,\dots,y_1)}}  \E{\bar{y}}.\end{align}
  The epsilon-components from this example are depicted in \cref{fig:imgage set projection 4D}.
\end{example}
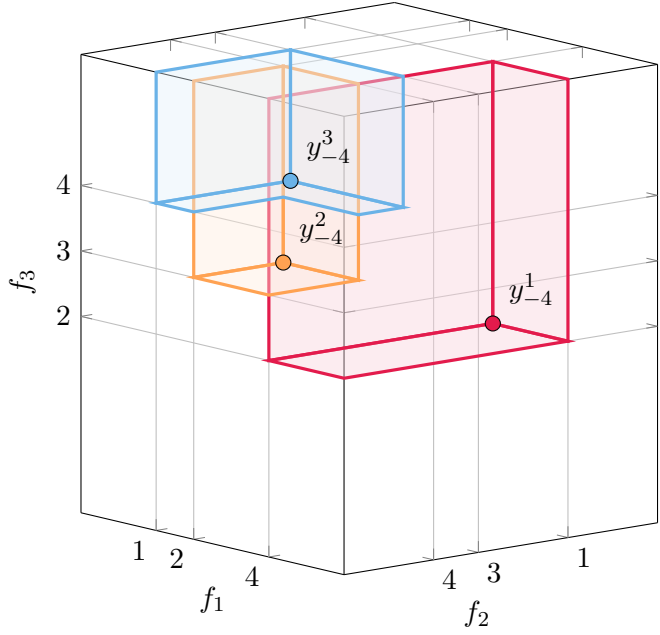
\begin{figure}
	\centering
		\subfloat{
		\begin{tikzpicture}[scale = 1]
			\begin{axis}[
				view={-50}{-10},
				width=260pt,
				height=260pt,	
				xmin=-1,xmax=6,
				ymin=-1,ymax=6,
				zmin=-1,zmax=6,
				grid=major,
				clip=true,
				xlabel={$f_1$},
				ylabel={$f_2$},
				zlabel={$f_3$},
                xtick ={1,2,4},
				ytick={1,3,4},
                ztick={2,3,4},
				every axis x label/.style={
				at={(axis cs: 2.5,6,-1.5)},
				anchor=north},
			every axis y label/.style={
				at={(axis cs: 6,2.5,-2)},
				anchor=east
			},
				]
				\cuboid{4}{1}{2}{6}{6}{6}{i1}{0.4}
				\cuboid{2}{4}{3}{4}{6}{6}{i2}{0.4}
				\filldraw[draw=i3,fill opacity=0.4,fill=i3!20,very thick] (axis cs: 1,3,4) -- (axis cs: 1,6,4) -- (axis cs: 2,6,4) -- (axis cs: 2,4,4)--    (axis cs: 4,4,4) -- (axis cs: 4,3,4)-- cycle;
				\filldraw[draw=i3,fill opacity=0.4,fill=i3!20,very thick] (axis cs: 1,3,4) -- (axis cs: 1,3,6) -- (axis cs: 1,6,6) -- (axis cs: 1,6,4)-- cycle;
				\filldraw[draw=i3,fill opacity=0.4,fill=i3!20,very thick] (axis cs: 1,3,4) -- (axis cs: 1,3,6) -- (axis cs: 4,3,6) -- (axis cs: 4,3,4)-- cycle;
				\node[draw,shape=circle,opacity=1,fill=i1, inner sep=2pt, label= above right : $y^1_{-4}$] at (axis cs:4,1,2) {};
				\node[draw,shape=circle,opacity=1,fill=i2, inner sep=2pt, label= above right : $y^2_{-4}$] at (axis cs:2,4,3) {};
				\node[draw,shape=circle,opacity=1,fill=i3, inner sep=2pt, label= above right : $y^3_{-4}$] at (axis cs:1,3,4) {};
			\end{axis}
			
	\end{tikzpicture}}

\caption{A visualization of \Cref{ex:introductory4D}: The projected nondominated images in $\mathbb{R}^3$  and their epsilon-components.} 
\label{fig:imgage set projection 4D}
\end{figure}
 
 We observe the following structural properties of epsilon-components. Similar properties (though in a slightly different context) are also given by Kirlik and Sayın~\cite{Kirlik2014}, among others. We restate the properties here in the context of epsilon-components and include a proof for the sake of completeness. We use $\cl$ to denote the closure of a set.
\begin{lemma} \label{lem: star shaped}
	Let $y\in Y_N$ and $\e\in\E{y}$. Then, the following holds:
	\begin{enumerate}[label=(\roman*)]
		\item For all $\e^*\in (y_{-k},\e]$, it holds that $\e^*\in\E{y}$. \label{item: star shaped 1}
		\item For all $\e^*\in [y_{-k},\e]$, it holds that $\e^*\in\cl{\E{y}}$. \label{item: star shaped 2}
	\end{enumerate}
\end{lemma}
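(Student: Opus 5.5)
For \ref{item: star shaped 1}, I would argue directly from \Cref{def:epsilon components}. Take $\e\in\E{y}$ and $\e^*\in(y_{-k},\e]$. Then $y_{-k}<\e^*$, so $y$ is feasible for the problem defining membership in $\E{\cdot}$ at parameter $\e^*$: it lies in $\{\bar y\in Y_N:\bar y_{-k}<\e^*\}$. Because $\e^*\leqq\e$, this feasible set is contained in the feasible set $\{\bar y\in Y_N:\bar y_{-k}<\e\}$. By assumption $y$ is the lexicographic minimizer of $(\bar y_k,\dots,\bar y_1)$ over the larger set. A minimizer over a set that still belongs to a subset is also the minimizer over that subset. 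The lexicographic minimum is unique, since distinct images differ in some component and the lexicographic order is total. Hence $y=\arglexmin$ over the smaller set, i.e.\ $\e^*\in\E{y}$. The key points are just the monotonicity of the feasible set in $\e$ and the uniqueness of the lexicographic optimum.

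For \ref{item: star shaped 2}, I would reduce to \ref{item: star shaped 1} by approximation. Take $\e^*\in[y_{-k},\e]$ and define
\[
\e^t\coloneqq (1-t)\e^*+t\e,\qquad t\in(0,1].
\]
The case to watch is that $\e^*$ may have some components equal to those of $y_{-k}$, and then $y_{-k}<\e^t$ can fail. To handle this, I would instead take the point $\e^t$ on the segment from $\e^*$ to $\e$. We have $\e^t_i=\e^*_i+t(\e_i-\e^*_i)$, and $\e_i>y_i$ since $\e\in\E{y}$ forces $y_{-k}<\e$. Together with $\e^*_i\ge y_i$ this gives $\e^t_i>y_i$ for every $t>0$. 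Also $\e^t\leqq\e$. So $\e^t\in(y_{-k},\e]$, and by \ref{item: star shaped 1}, $\e^t\in\E{y}$. Letting $t\to0$ gives $\e^t\to\e^*$, so $\e^*\in\cl{\E{y}}$.

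Components equal to $\infty$ need a short remark. If $\e_i=\infty$, I would replace $\e$ by a finite point of $\E{y}$ obtained from \ref{item: star shaped 1}, namely $\e$ with its infinite coordinates (and any coordinates exceeding them) capped at $\max(\e^*_i,y_i)+1$. Alternatively, I would treat those coordinates separately, since they are fixed at $\infty$ for both points. I expect this bookkeeping around boundary components and infinite entries to be the only real obstacle; the rest is routine.
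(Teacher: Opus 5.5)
Your proof is correct and follows the paper's argument. Part (i) is the same monotonicity-of-the-feasible-set argument. Part (ii) likewise approximates $\e^*$ by points of $(y_{-k},\e]$ and invokes (i); the only difference is that you move along the segment toward $\e$, whereas the paper perturbs only the coordinates with $\e^*_i=y_i$ by $\frac{1}{m}$. Your extra bookkeeping for infinite entries is unnecessary, since $\E{y}\subseteq\mathbb{R}^{k-1}$ by definition, so $\e$ is finite.
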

\begin{proof} \phantom{linebreak} 		
	\begin{enumerate}[label=(\roman*)] 
		\item It holds that $y_{-k}<\e^*$. Thus, $y$ is feasible for $\emethod{\e^*}$. Furthermore, if some $y^*\in Y$ is feasible for $\emethod{\e^*}$, then it is also feasible for $\emethod{\e}$. Therefore, $\e\in\E{y}$ implies $(y_k,\dots,y_1)\llex(y^*_k,\dots,y_1^*)$. Hence, any image feasible for $\emethod{\e^*}$ has a worse lexicographic objective function value than $y$. Therefore, it is $\e^*\in\E{y}$.
		\item  For each $m \in\mathbb{N}$, we define $\e^{(m)}$ component-wise by 
		$$\e_i^{(m)}\coloneqq \begin{cases}\e^*_i+\frac{1}{m}, & \text{ if  }\e^*_i=y_i, \\ \e^*_i, &\text{else} \end{cases}, \text{ for } i\in[k-1].$$
		Then, for sufficiently large $m'$  it holds that $\e^{(m')}\in (y_{-k},\e]$. Therefore, by \ref{item: star shaped 1}, we obtain $\e^{(m)}\in\E{y}$ for all $m\ge m'$. It is  $\lim_{m\to\infty}\e^{(m)} = \e^*$, and, thus, $\e^*$ is an accumulation point of the sequence ${(\e^{(m)})}_{m\in\mathbb{N}}$. Therefore, it holds that $\e^*\in\cl{\E{y}}$.
	\end{enumerate}
\end{proof}

\subsection{Viable Combinations}
Next, we describe the set of all parameters for which we solve lexicographic epsilon-constraint scalarization problems to compute the nondominated set. Hereby, each parameter is defined by $k-1$ images. Furthermore, we use so-called dummy images to represent unconstrained objectives.
\begin{definition}
	For $t\in[k]$, the $t$-th \emph{dummy image} is given by
	$$d_i^t\coloneq \begin{cases} 
		\infty, & i=t \\
		-\infty, & i\ne t
	\end{cases}, \text{ for all } i\in[k].
	$$
\end{definition}
The dummy images can also be described by sufficiently small and large values, which is particularly useful for visualization purposes. 
The following observations are only for illustrative purposes and to give the reader an intuition. Thus, they are provided without proof.

Combining \Cref{eq:epsilon components} and \Cref{lem: star shaped}, we get that, for each $y\in Y_N$, there exist a certain number $\alpha(y)\in\mathbb{N}$ of parameters $\e^{(i)}\in(\mathbb{R}\cup \{\infty\})^{k-1}$ such that $\E{y}$ can be written as
$$\E{y} = \bigcup_{i=1}^{\alpha(y)}   (y_{-k},\e^{(i)}].$$ 
Additionally, for each $\e^{(i)}$ and $j\in[k-1]$, there exists an image $\bar{y}\in Y_N \cup \{d^j\}$ such that $\e^{(i)}_j=\bar{y}_j$. Thus, each of these parameters can be defined by $k-1$ images. 
Such an image $\bar{y}$ has a better lexicographic objective function value than $y$ and an ``adjacent'' epsilon-components, i.\,e., $\e^{(i)}\in\cl{\E{\bar{y}}}\cap\E{y}\ne \emptyset$.
Moreover, if we have found all the parameters $\e^{(i)}$ that describe the epsilon-components and solved a lexicographic epsilon-constraint scalarization problem for each, then we have explored the entire parameter space. That is, for each $\e\in(\mathbb{R}\cup \{\infty\})^{k-1}$ we know which nondominated image is optimal for $\emethod{\e}$. 

To summarize, we have a finite set of parameters we want to enumerate and for each such parameter $\e$ there exist $k-1$ images (nondominated or dummy) $y^1,\dots,y^{k-1}$ with
$$\e\in \clE{y^i}{i} \coloneqq \left\{\cl{\E{y^i}}: \e_i=y^i_i\right\}$$
for $i\in[k-1]$.

For any $\e$ in such a set $\clE{y^i}{i}$, it holds that the image $y^i$ itself is not feasible for $\emethod{\e}$. Thus, solving the corresponding scalarization problem yields another nondominated image. 
  
The set of parameters PEA enumerates is given as follows.
\begin{definition}
	Let $\Y^1,\dots,\Y^{k-1}\in Y_N\cup\{d^1,\dots,d^{k-1}\}$. Then, $\mathcal{\Y}=(\Y^1,\dots,\Y^{k-1})$ is a \emph{viable combination} of $Y_N$, if 
    $$\bigcap_{i=1}^{k-1} \clE{\Y^i}{i} \ne \emptyset.$$ 
    Each viable combination defines a \emph{viable parameter}
	$$ \e(\mathcal{Y})\coloneqq\left(\Y^1_1,\dots,\Y^{k-1}_{k-1}\right).$$
	We denote the set of all viable combinations of $Y_N$ by $\V{Y_N}$.
\end{definition}
For each viable combination $\Y$ it holds that that $\bigcap_{i=1}^{k-1} \clE{\Y^i}{i}=\{\e(\mathcal{Y})\}$, since any  $\e\in \bigcap_{i=1}^{k-1} \clE{\Y^i}{i}$ has $k-1$ fixed components.


We illustrate the concept of viable combinations and how they describe the parameter space/epsilon-components in the following example.

\begin{example}\label{ex:introductory_vc}
	Consider the image set of \Cref{ex:introductory4D}. We use the dummy images 
    $$d^1=(5,0,0,0)^\top,d^2=(0,5,0,0)^\top,d^3=(0,0,5,0)^\top \text{ and } d^4=(0,0,0,5)^\top.$$ Additionally, we  extend the concept of epsilon-components (cf.\ \Cref{def:epsilon components}) and also refer to epsilon-components of dummy images. Hereby, $\E{d^4}$ represents all parameters for which the lexicographic epsilon-constraint scalarization is infeasible.  There are eleven viable combinations with viable parameters as depicted in \Cref{fig:dummies and viable combinations}. Furthermore, we have
	\begin{align*}
	    &\E{y^1}=(y^1_{-4},\e(d^1,d^2,d^3)],\\
        &\E{y^2}=(y^1_{-4},\e(y^1,d^2,d^3)] \text{ and } \\
        &\E{y^3}=(y^3_{-4},\e(y^2,d^2,d^3)]\cup (y^3_{-4},\e(y^1,y^2,d^3)].
	\end{align*}
    $\E{d^4}$ can be described by the remaining seven viable parameters. Therefore, we need the viable combinations $(d^1,d^2,d^3)$, $(y^1,d^2,d^3)$, $(y^2,d^2,d^3)$ and $(y^1,y^2,d^3)$ to describe the epsilon-components of all nondominated images and the remaining seven viable combinations to describe $\E{d^4}$. 
\end{example}
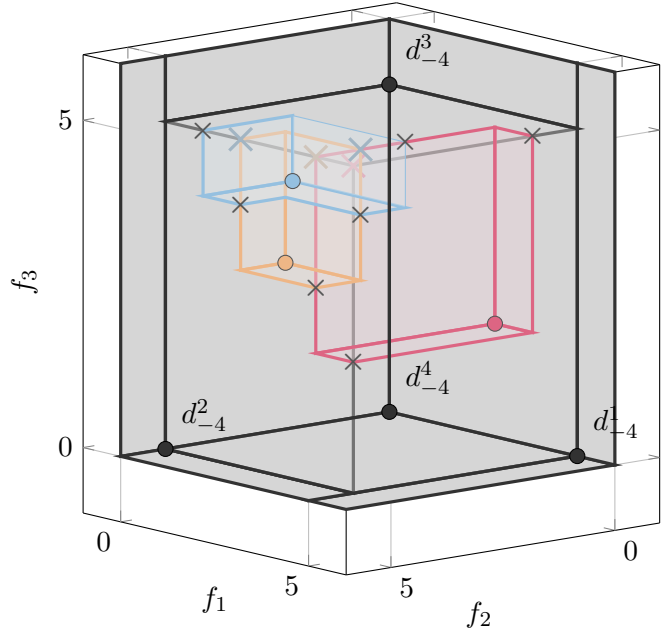
\begin{figure}[ht]
	\centering
	\begin{tikzpicture}[scale = 1]
		\begin{axis}[
			view={-50}{-10},
			width=260pt,
			height=260pt,	
			xmin=-1,xmax=6,
			ymin=-1,ymax=6,
			zmin=-1,zmax=6,
			grid=major,
			clip=true,
			xlabel={$f_1$},
			ylabel={$f_2$},
			zlabel={$f_3$},
            xtick ={0,5},
			ytick={0,5},
            ztick={0,5},
			every axis x label/.style={
				at={(axis cs: 2.5,6,-1.5)},
				anchor=north},
			every axis y label/.style={
				at={(axis cs: 6,2.5,-2)},
				anchor=east
			},
			]
			\cuboid{5}{0}{0}{6}{6}{6}{dummy}{1}
			\cuboid{0}{5}{0}{5}{6}{6}{dummy}{1}
			\cuboid{0}{0}{5}{5}{5}{6}{dummy}{1}
            \node[draw,shape=cross out,draw=i1,very thick] at (axis cs: 5,5,5 ) {};
			\cuboid{4}{1}{2}{5}{5}{5}{i1}{0.4}
            \node[draw,shape=cross out,draw=i2!60!black,very thick] at (axis cs: 4,5,5 ) {};
			\cuboid{2}{4}{3}{4}{5}{5}{i2}{0.4}
            
			\node[draw,shape=cross out,draw=i3!60!black,very thick] at (axis cs: 2,5,5 ) {};
			\node[draw,shape=cross out,draw=i3!60!black,very thick] at (axis cs: 4,4,5 ) {};
			\filldraw[draw=i3,fill opacity=0.4,fill=i3!20,very thick] (axis cs: 1,3,4) -- (axis cs: 1,5,4) -- (axis cs: 2,5,4) -- (axis cs: 2,4,4)--    (axis cs: 4,4,4) -- (axis cs: 4,3,4)-- cycle;
			
			\filldraw[draw=i3,fill opacity=0.4,fill=i3!20,very thick] (axis cs: 1,3,4) -- (axis cs: 1,3,5) -- (axis cs: 1,5,5) -- (axis cs: 1,5,4)-- cycle;
			\filldraw[draw=i3,fill opacity=0.4,fill=i3!20] (axis cs: 1,3,4) -- (axis cs: 1,3,5) -- (axis cs: 4,3,5) -- (axis cs: 4,3,4)-- cycle;
			\node at (axis cs: 1,5,5) {$\boldsymbol\times$};
			\node at (axis cs: 4,3,5) {$\boldsymbol\times$};
			\node at (axis cs: 4,4,4) {$\boldsymbol\times$};
			\node at (axis cs: 2,5,4) {$\boldsymbol\times$};
			\node at (axis cs: 4,5,3) {$\boldsymbol\times$};
			\node at (axis cs: 5,1,5) {$\boldsymbol\times$};
			\node at (axis cs: 5,5,2) {$\boldsymbol\times$};
            \node[draw,shape=circle,opacity=1,fill=i1, inner sep=2pt] at (axis cs:4,1,2) {};
				\node[draw,shape=circle,opacity=1,fill=i2, inner sep=2pt] at (axis cs:2,4,3) {};
				\node[draw,shape=circle,opacity=1,fill=i3, inner sep=2pt] at (axis cs:1,3,4) {};
			\cuboid{0}{0}{0}{5}{5}{5}{dummy}{0.4}
			\node[draw,shape=circle,opacity=1,fill=dummy, inner sep=2pt, label= above right : $d^4_{-4}$] at (axis cs:0,0,0) {};
			\node[draw,shape=circle,opacity=1,fill=dummy, inner sep=2pt, label= above right : $d^1_{-4}$] at (axis cs:5,0,0) {};
			\node[draw,shape=circle,opacity=1,fill=dummy, inner sep=2pt, label= above right : $d^2_{-4}$] at (axis cs:0,5,0) {};
			\node[draw,shape=circle,opacity=1,fill=dummy, inner sep=2pt, label= above right : $d^3_{-4}$] at (axis cs:0,0,5) {};
		\end{axis}
		
	\end{tikzpicture}
	
	\caption{The projected nondominated images and dummy images as well as their epsilon-components of \Cref{ex:introductory4D} in $\mathbb{R}^3$. All viable parameters are marked. The four viable parameters that describe the epsilon-component of a nondominated image are marked bigger and in their respective color. The other seven parameters describe the epsilon-component of the dummy image $d^4$, i.\,e., the part of the parameter space for which the respective epsilon-constraint scalarization problems are infeasible.}
	\label{fig:dummies and viable combinations}

\end{figure}
So far we have only given an intuition why viable combinations describe the entire parameter space. We give the formal proofs and show how $\V{Y_N}$ can be enumerated in \Cref{sec:general position} and \Cref{sec:not general position}. 

In \Cref{ex:introductory_vc}, each viable combination describes a different viable parameter. This is because the image set is in \emph{general position}, i.\,e., for all $i\in[k]$ and $y,\bar{y}\in Y_N$ with $y\ne\bar{y}$, it holds that  $y_i\ne\bar{y}_i$. Furthermore, each viable parameter is necessary to describe one epsilon-component. This is not the case if $Y_N$ is not in general position. 
In that case, it might happen that there are two  viable combinations $\Y,\Z\in\V{Y_N}$ with $\Y\ne\Z$, but $\e(\Y)=\e(\Z)$. Therefore, we first consider the simpler special case. 

\section{Viable Combinations under General Position}
\label{sec:general position}
In this section, we study properties of viable combinations and how they can be used to compute the nondominated set under the following assumption.

\begin{assumption}
	$Y_N$ is in general position.
\end{assumption}

This is a rather restrictive assumption in multi-objective optimization, but it makes the following proofs less technical. We show in \Cref{sec:not general position} that similar results still hold, when $Y_N$ is not in general position.

\begin{remark}
    Since we assume that $Y_N$ is in general position, for two nondominated images $y,\bar{y}$, it holds that
    $$(y_k,\dots,y_1)\llex (\bar{y}_k,\dots, \bar{y}_1) \text{ if and only if } y_k<\bar{y}_k.$$
    Therefore, a nondominated image $y$  that is feasible for some $\emethod{\e}$ is also optimal if no nondominated image $\bar{y}$ with $\bar{y}_k<y_k$ is feasible.

    Furthermore, no nondominated images share any objective function value. Therefore, for all $i\in[k]$ it is $y_i\ngtr\bar{y}_i$ if and only if $y_i<\bar{y}_i$.
\end{remark}

The theoretical results presented in this section form the foundation for the multi-objective PEA.
The approach can be outlined as follows:
We define a partial order on the set of all viable combinations such that it induces a directed tree. Each vertex represents a viable combination/parameter. Then, the nondominated image that is optimal for the corresponding lexicographic epsilon-constraint problem is used to explore outgoing arcs and obtain new viable combinations.

We show that, for every nondominated image, there exists a viable combination such that it is optimal for the corresponding lexicographic epsilon-constraint problem. Consequently, enumerating all viable combinations and solving a lexicographic epsilon-constraint problem for each is sufficient to compute the entire nondominated set.
Furthermore, under the general position assumption, we show that the parameters defined by viable combinations are projections of local upper bounds, cf.~Klamroth et al.~\cite{Klamroth2015}. 
Therefore,  the number of viable combinations is in $\mathcal{O}(|Y_N|^{\lfloor\frac{k}{2}\rfloor})$ (as the number of local upper bounds is known to be in $\mathcal{O}(|Y_N|^{\lfloor\frac{k}{2}\rfloor})$).

In the following, $\delta>0$ is the same $\delta$ that can be  used to turn strict inequality constraints in the definition of $\emethod{\e}$ into non-strict inequalities, i.\,e.,
$$\delta<\min_{\substack{y,\bar{y}\in Y_N\\ y\ne \bar{y}}} \min_{i\in [k]} |y_i-\bar{y}_i|.$$
We show two conditions that hold for any viable combination and the optimal image of the corresponding scalarization  problem. Afterwards, we use them to define an order on the set of viable combinations. 
The first condition gives a criterion that any viable combination must satisfy.
The second condition states, that the $k$-th objective function value of the optimal image is strictly larger than the $k$-th objective function value of all images that appear in the viable combination.
\begin{corollary}\label{cor:bascis}
	Let $\Y$ be a viable combination. Then, the following holds:
	\begin{enumerate}[label=(\roman*)]
		\item \label{item:basic 1} For all $i,j\in[k-1]$ with $i\ne j$, it holds that $\Y^j_i< \Y^i_i$.
		\item \label{item: basic 2} Let $y^*$ be the optimal image of $\emethod{\e(\Y)}$. Then, for any $i\in[k-1]$, it holds that $y^*_k>\Y^i_k$.
	\end{enumerate}
\end{corollary}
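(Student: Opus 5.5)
Both parts follow from membership of the single point $\e\coloneqq\e(\Y)$ in every set $\clE{\Y^i}{i}$. Since $\bigcap_{i}\clE{\Y^i}{i}=\{\e\}$, for each $i\in[k-1]$ we have $\e\in\cl{\E{\Y^i}}$ and $\e_i=\Y^i_i$. The plan is to derive a coordinatewise comparison from $\e\in\cl{\E{\Y^i}}$, and then argue about optimality near $\e$.

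For \ref{item:basic 1}, fix $i\neq j$. By the characterization \eqref{eq:epsilon components}, $\E{\Y^j}\subseteq\{\e'\in\mathbb{R}^{k-1}:\Y^j_{-k}<\e'\}$, so its closure satisfies $\Y^j_{-k}\leqq\e$. In particular $\Y^j_i\le\e_i=\Y^i_i$. For dummy images this reads $-\infty\le\cdot$, or the case $\Y^j=d^i$ gives $\infty$ on the left. The latter is excluded because $\Y^j_i\le\Y^i_i$ forces $\Y^i=d^i$ as well, and then one checks from the closure that only $d^j$ in slot $j$ is sensible. I would handle dummies by the convention that $d^t_i=-\infty<\Y^i_i$ whenever $t\neq i$. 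Strictness for nondominated images comes from general position. If $\Y^i$ and $\Y^j$ are the same image, then $\Y^j_i=\Y^i_i$ and equality would hold, so I must rule this out. This is where $j\neq i$ matters: the same image $y$ in slots $i$ and $j$ would require $\e_i=y_i$ and $\e_j=y_j$. I expect this to be acceptable only if the strictness fails, which is the main subtlety. I would argue that $\e$ must be a closure point of $\E{y}$ while also being a boundary point of the neighbouring component in each slot. That is consistent, so the statement presumably intends the images to be distinct or accepts this case via a different argument. My fallback is to use that $\e_i=y_i$ and $\e_j=y_j$ places $\e$ at a corner of $(y_{-k},\cdot]$, and to contradict viability there.

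For \ref{item: basic 2}, let $y^*$ be optimal for $\emethod{\e}$, so $y^*_{-k}<\e$ and hence $y^*_i<\e_i=\Y^i_i$. If $\Y^i$ is a dummy, its $k$-th value is $-\infty$ and the claim is immediate. Otherwise, since $\e\in\cl{\E{\Y^i}}$, choose $\e'\in\E{\Y^i}$ arbitrarily close to $\e$, with $\|\e'-\e\|<\delta$ componentwise. Then $y^*_{-k}\le\e-\delta<\e'$, because the gaps between distinct image values exceed $\delta$. So $y^*$ is feasible for $\emethod{\e'}$, whose optimum is $\Y^i$. By uniqueness of the lexicographic optimum and general position, $\Y^i_k<y^*_k$ unless $y^*=\Y^i$. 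But $y^*=\Y^i$ is impossible, because $y^*_i<\Y^i_i$.

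The main obstacle is the careful treatment of dummy images and the strictness in \ref{item:basic 1}. I would make the $-\infty$/$\infty$ conventions explicit first, and then use general position together with $\delta$-perturbation arguments, as in the proof of \Cref{lem: star shaped}\ref{item: star shaped 2}.
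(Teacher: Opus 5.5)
Your argument for the cases the paper actually covers is essentially the paper's. In (i) you get $\Y^j_{-k}\leqq\e(\Y)$ by taking the closure of $\E{\Y^j}\subseteq\{\e' : \Y^j_{-k}<\e'\}$. This is a slightly cleaner form of the paper's $\delta$-ball contradiction, and strictness then comes from general position, as in the paper. In (ii) you run the paper's argument directly rather than by contradiction: $y^*$ is feasible at points of $\E{\Y^i}$ within $\delta$ of $\e(\Y)$, and uniqueness of the lexicographic optimum plus general position finish it.

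Your suspicion about two equal entries is correct, and you should drop the fallback: that case cannot be ruled out, and in it (i) is false. Take $k=3$ and $Y_N=\{y\}$. Then $\E{y}=\{\e\in\mathbb{R}^2 : y_{-3}<\e\}$, so $(y_1,y_2)\in\clE{y}{1}\cap\clE{y}{2}$. Hence $(y,y)$ is a viable combination by definition, yet (i) would demand $y_1<y_1$. Adding $\bar y=(y_1-1,y_2-1,y_3+1)$ makes $\emethod{(y_1,y_2)}$ feasible and changes nothing. The paper's proof has the same blind spot: it supposes $\Y^j_i>\Y^i_i$, which negates the claim only when $\Y^j\neq\Y^i$.

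The repair is an extra hypothesis, not a better argument. Require the entries of $\Y$ to be pairwise distinct, and allow $d^i$ as the only dummy in slot $i$ (otherwise $\e(\Y)_i=-\infty$). Under these conventions your proof of (i) is complete. Your dummy discussion then reduces to $\Y^j_i=-\infty$ when $\Y^j=d^j$ and $\Y^i_i=\infty$ when $\Y^i=d^i$.

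Nothing is lost for PEA, since every combination generated from $\vc{d}$ by scions has distinct entries. Indeed, the optimal image $y^*$ of $\emethod{\e(\Y)}$ satisfies $y^*_i<\Y^i_i$ for all $i$, so it never equals an entry already present. Part (ii) needs neither restriction, and your argument for it stands as written.
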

\begin{proof} \phantom{linebreak}
	
	\begin{enumerate}[label=(\roman*)]
		\item  Suppose there exist $i,j\in[k-1]$ with $i\ne j$ such that  $\Y^j_i> \Y^i_i$. 
        We show that this implies that $\e(\Y)\notin \clE{\Y^j}{j}$ which contradicts that $\Y$ is a viable combination.
        It holds that $\Y^j$ is not feasible for $\emethod{\e^*}$, for any $\e^*\in B_{\delta}(\e(\Y))$. This holds because for any such $\e^*$, it is $\Y^j_i>\Y^i_i+\delta=\e(\Y)_i+\delta \ge \e^*_i$. Thus, $B_{\delta}(\e(\Y))\cap\E{\Y^j}=\emptyset$, which contradicts $\e(\Y)\in \clE{\Y^j}{j}$. $B_{\delta}(\e(\Y))$ denotes the ${\delta}$-neighborhood of $\e(\Y)$.
		\item 
        For  $\e^*\in B_{\delta}(\e(\Y))$ it holds that $y^*$ is feasible for $\emethod{\e^*}$. Thus, for any $i\in[k-1]$ with $y^*_k<\Y^i_k$, we have $B_{{\delta}}(\e(\Y))\cap\E{\Y^i}=\emptyset$ which contradicts $\e(\Y)\in\clE{\Y^i}{i}$.
	\end{enumerate}
 
\end{proof}

We now define an order on the set of viable combinations. The idea is the following: Given a viable combination~$\mathcal{Y}\in\V{Y_N}$, we solve a lexicographic epsilon-constraint scalarization and obtain a new nondominated image $y^*$. Then, we only use this information to generate new viable combinations, i.\,e., information of $\Y^1,\dots,\Y^{k-1}$ and $y^*$. Therefore, we  generate new viable combinations by replacing any of the $\Y^i$ with $y^*$. 
The resulting image combination~$(\Y^1,\dots,\Y^{\ell-1},y^*,\Y^{\ell+1},\dots,\Y^k)$ cannot be a viable combination if \Cref{cor:bascis}.\ref{item:basic 1} does not hold. Therefore, we require $y^*_\ell > \Y^i_\ell$ for all $i\ne\ell$.
\begin{definition}\label{def:scions}
    Let $\mathcal{Y}\in\V{Y_N}$ and let $y^*$ be the optimal image of $\emethod{\e(\mathcal{Y})}$. Then, for every $\ell\in[k-1]$, we call the combination $(\Y^1,\dots,\Y^{\ell-1},y^*,\Y^{\ell+1},\dots,\Y^{k-1})$ the \emph{$\ell$-th scion} of $\mathcal{Y}$ if, for all $i\in[k-1]\backslash\{\ell\}$, it holds that $y^*_\ell > \Y^i_\ell$.
    We denote the $\ell$-th scion by $\scion{\Y}{\ell}$.
    Conversely, we call $\Y$ a \emph{precursor} of $\scion{\Y}{\ell}$.
\end{definition}
In view of \Cref{def:scions}, the necessary condition in~\Cref{cor:bascis}.\ref{item:basic 1} is also sufficient for  generating new viable combinations.

\begin{theorem}\label{thm:scions}
	Let $\mathcal{Y}\in\V{Y_N}$. Then, for all $\ell\in[k-1]$, it holds that the $\ell$-th scion, if it exists, is a viable combination.
\end{theorem}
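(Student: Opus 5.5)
Write $\Z\coloneqq\scion{\Y}{\ell}$, so $\Z^\ell=y^*$ and $\Z^i=\Y^i$ for $i\ne\ell$. The new viable parameter is
$$\e'\coloneqq\e(\Z)=(\Y^1_1,\dots,\Y^{\ell-1}_{\ell-1},y^*_\ell,\Y^{\ell+1}_{\ell+1},\dots,\Y^{k-1}_{k-1}).$$
I have to show $\e'\in\clE{\Z^i}{i}$ for every $i\in[k-1]$. The $i$-th component of $\e'$ equals $\Z^i_i$ by construction, so only the closure conditions remain:
\begin{itemize}
\item[(a)] $\e'\in\cl{\E{y^*}}$;
\item[(b)] $\e'\in\cl{\E{\Y^i}}$ for every $i\ne\ell$.
\end{itemize}
The approach is to compare the lexicographic problems at $\e'$ with those at $\e(\Y)$, using that the two parameters differ only in coordinate $\ell$. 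By the scion condition and \Cref{cor:bascis}.\ref{item:basic 1}, $\Y^\ell_\ell>y^*_\ell$, since $y^*$ is feasible for $\emethod{\e(\Y)}$. So $\e'\le\e(\Y)$ with strict decrease in coordinate $\ell$ only.

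For (a), apply \Cref{lem: star shaped}.\ref{item: star shaped 2} to $y^*$. Since $\e(\Y)$ is only in the closure of components, work with a perturbation $\hat\e\in B_\delta(\e(\Y))$, say $\hat\e=\e(\Y)$ itself, which lies in $\E{y^*}$ because $y^*$ is optimal for $\emethod{\e(\Y)}$. Then check $y^*_{-k}\leqq\e'\leqq\e(\Y)$ componentwise:
\begin{itemize}
\item in coordinate $\ell$, equality $\e'_\ell=y^*_\ell$ holds;
\item in any other coordinate $j$, $\e'_j=\e(\Y)_j>y^*_j$ by feasibility of $y^*$.
\end{itemize}
Part \ref{item: star shaped 2} of the lemma then gives $\e'\in\cl{\E{y^*}}$.

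For (b), fix $i\ne\ell$. I will exhibit points of $\E{\Y^i}$ arbitrarily close to $\e'$. The natural candidate is $\e'$ with coordinate $i$ raised by a small $\eta\in(0,\delta)$ and every other coordinate $j$ with $\e'_j=\Y^i_j$ (only possible for $j=i$ here by general position) perturbed slightly. Since $\e(\Y)\in\cl{\E{\Y^i}}$, there are points $\hat\e\in\E{\Y^i}$ within $\eta$ of $\e(\Y)$. Lower the $\ell$-th coordinate of such a $\hat\e$ to $y^*_\ell+\eta'$ with $\eta'$ small. Two things must then be verified.
\begin{itemize}
\item \textbf{Feasibility.} $\Y^i$ remains feasible because $\Y^i_\ell<y^*_\ell$ by the scion condition.
\item \textbf{Optimality.} Every image feasible at the new point was already feasible at $\hat\e$, since the parameter only decreased. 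Hence no image with smaller $k$-th value than $\Y^i$ is feasible, and $\Y^i$ stays optimal.
\end{itemize}
This is just \Cref{lem: star shaped}.\ref{item: star shaped 1} applied to $\hat\e$, once I check that the new point lies in $(\Y^i_{-k},\hat\e]$. Letting $\eta,\eta'\to0$ yields $\e'\in\cl{\E{\Y^i}}$.

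I expect the main obstacle to be the bookkeeping with dummy images and infinite coordinates, together with choosing $\hat\e$ so that lowering coordinate $\ell$ does not violate $\Y^i_{-k}<\cdot$. The latter is exactly where the scion condition $y^*_\ell>\Y^i_\ell$ enters. When $\Y^i$ is a dummy $d^i$, the dummy has $-\infty$ in coordinate $\ell$, so feasibility is automatic and the argument goes through formally. If $\Y^i_\ell$ were close to $y^*_\ell$, general position and the choice of $\delta$ guarantee a gap larger than $\eta'$.
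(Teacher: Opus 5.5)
Your proof is correct. For the unchanged entries $\Y^i$, $i\ne\ell$, it takes a different route from the paper.

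The paper argues by contradiction. If $\e'\notin\clE{\Y^i}{i}$, it looks at $\e'+\delta e^i$, where $\Y^i$ is still feasible but then not optimal. So some $\hat y$ with $\hat y_k<\Y^i_k$ is feasible there. By general position this $\hat y$ is also feasible for $\emethod{\e(\Y)}$, and \Cref{cor:bascis}.\ref{item: basic 2} gives $\hat y_k<\Y^i_k<y^*_k$, contradicting the optimality of $y^*$.

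You instead transport approximating points directly. You take $\hat\e\in\E{\Y^i}$ close to $\e(\Y)$, lower its $\ell$-th coordinate to $y^*_\ell+\eta'$, and apply \Cref{lem: star shaped}.\ref{item: star shaped 1}. This is more elementary. For this part you need neither the optimality of $y^*$, nor \Cref{cor:bascis}.\ref{item: basic 2}, nor general position; only the strict inequalities $\Y^i_\ell<y^*_\ell<\Y^\ell_\ell$ are used. It also makes visible that optimality of $y^*$ enters only in part (a), via $\e(\Y)\in\E{y^*}$ and \Cref{lem: star shaped}.\ref{item: star shaped 2}, a case the paper merely calls analogous.

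The paper's contradiction pattern has its own payoff. It is exactly what the paper packages as \Cref{re:important stuff} and reuses in \Cref{thm:at least one precursor} and \Cref{thm: viable combination construction}, so that route yields a reusable tool.

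One small correction to your last paragraph concerns where a gap is needed. Your construction needs a gap between $y^*_\ell$ and $\hat\e_\ell\approx\Y^\ell_\ell$, so that the lowered point stays $\leqq\hat\e$; this holds because $y^*_\ell<\Y^\ell_\ell$ strictly. Feasibility of $\Y^i$ uses only the strict scion inequality $\Y^i_\ell<y^*_\ell$, so no gap is required there.
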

\begin{proof}
    In the following, $y^*$ is the optimal image of $\emethod{\e(\Y)}$.
    
    To show that the $\ell$-th scion is a viable combination, we need to show that it is
    $$\bigcap_{i\in[k-1]\backslash\{\ell\}} \clE{\Y^i}{i} \cap \clE{y^*}{\ell}\ne\emptyset.$$
	To this end, we show that $$\e^*\coloneqq\left(\Y^1_1,\dots,\Y^{\ell-1}_{\ell-1},y^*_\ell,\Y^{\ell+1}_{\ell+1},\dots,\Y^{k-1}_{k-1}\right)\in \clE{\Y^i}{i}$$ for any $i\in[k-1]\backslash\{\ell\}$ by  contradiction. Showing that $\e^*\in \clE{y^*}{\ell}$ works analogously.

    Suppose $\e^*\notin\clE{\Y^i}{i}$.
    Then, it is $B_\delta(\e^*)\cap\E{\Y^i}=\emptyset$. In particular, it is
	$$\e^i\coloneq\left(\Y^1_1,\dots,\Y^{\ell-1}_{\ell-1},y^*_\ell,\Y^{\ell+1}_{\ell+1},\dots,\Y^{k-1}_{k-1}\right)+\delta e^i \notin \E{\Y^i}$$
    where $e^i$ denotes the $i$-th unit vector.
		
	By \Cref{cor:bascis}.\ref{item:basic 1} and since $y^*_\ell >\Y^i_\ell$, $\Y^i$ is feasible for $\emethod{\e^i}$.
	Since it is feasible but not optimal, there  exists a $\hat{y}$  with $\hat{y}_k<\Y^i_k$ that is feasible for $\emethod{\e^i}$.
    In addition, any image $y\ne\Y^i$ that is feasible for $\emethod{\e^i}$ is also feasible for $\emethod{\e(\Y)}$, i.\,e., $\hat{y}$ is feasible for $\emethod{\e(\Y)}$.
     By \Cref{cor:bascis}.\ref{item: basic 2}, it holds  that $\hat{y}^i_k<\Y^i_k<y^*_k$.
    This contradicts that $y^*$ is optimal for $\emethod{\e(\Y)}$. Consequently, $\e\in \clE{\Y^i}{i}$.
    
	Thus, it is $(\Y^1,\dots,\Y^{\ell-1},y^*,\Y^{\ell+1},\dots,\Y^{k-1})\in\V{Y_N}.$
\end{proof}
We can use the same approach as in the proof of \Cref{thm: scions not in general position} for statements of the following form:
\begin{corollary}\label{re:important stuff}
  Let $\e\in(\mathbb{R}\cup\{\infty\})^{k-1}$ such that there are images $y^1,\dots,y^{k-1}\in Y_N\cup \{d^1,\dots,d^{k-1}\}$ with $\e_i=y^i_i$ for all $i\in[k-1]$. Let $i\in[k-1]$ such that $y^i_j<y^j_j$ for all $j\in[k-1]\backslash\{i\}$, as is the case for a viable combination. Then, it holds that $\e\notin \clE{y^i}{i}$ implies that there exists a $y^*\in Y_N$ that is optimal for $\emethod{\e}$ with $y^*_k<y^i_k$. 
\end{corollary}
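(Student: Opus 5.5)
We mirror the contradiction argument from the proof of \Cref{thm:scions}. Assume $\e\notin\clE{y^i}{i}$. We want to show that the optimal image $y^*$ of $\emethod{\e}$ exists and satisfies $y^*_k<y^i_k$.

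\textbf{Step 1: a perturbed parameter.} Since $\e\notin\cl{\E{y^i}}$ (note $\e_i=y^i_i$ holds by assumption), some neighbourhood of $\e$ misses $\E{y^i}$. For $\delta>0$ as in the definition preceding \Cref{cor:bascis}, we take $B_\delta(\e)\cap\E{y^i}=\emptyset$; if necessary we shrink $\delta$ so that it is also smaller than the chosen neighbourhood radius. We then consider
$$\e^i\coloneqq \e+\delta e^i,$$
which therefore lies outside $\E{y^i}$. If some components of $\e$ are $\infty$ (dummy images), we interpret the ball componentwise; those components impose no constraint.

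\textbf{Step 2: $y^i$ is feasible for $\emethod{\e^i}$.} In component $i$ we have $y^i_i<y^i_i+\delta=\e^i_i$. In every component $j\ne i$ the hypothesis gives $y^i_j<y^j_j=\e_j=\e^i_j$. Hence $y^i_{-k}<\e^i$. If $y^i$ is a dummy image $d^t$ with $t\ne i$, then $y^i_t=\infty$ and the hypothesis $y^i_t<y^t_t$ fails, so this case cannot occur. If $y^i=d^i$, then $\clE{d^i}{i}$ has to be interpreted via the extended epsilon-components. This degenerate case is the one delicate point, and I would treat it separately: either it is excluded, or the conclusion is read with $d^i_k=-\infty$, in which case the statement is vacuous or follows from the convention that $\e_i=\infty$ lies in the closure.

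\textbf{Step 3: a strictly better image.} Since $y^i$ is feasible for $\emethod{\e^i}$ but $\e^i\notin\E{y^i}$, some $\hat y\in Y_N$ is feasible for $\emethod{\e^i}$ and lexicographically better than $y^i$. Under general position this means $\hat y_k<y^i_k$.

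\textbf{Step 4: $\hat y$ is feasible for $\emethod{\e}$.} For $j\ne i$, $\hat y_j<\e^i_j=\e_j$. For component $i$, we have $\hat y_i<y^i_i+\delta$. Because $\hat y\ne y^i$ and $\delta$ is below the minimal gap between distinct values, general position forces $\hat y_i<y^i_i=\e_i$. So $\hat y$ is feasible for $\emethod{\e}$.

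\textbf{Step 5: conclusion.} The problem $\emethod{\e}$ is therefore feasible, and its optimal image $y^*$ satisfies $(y^*_k,\dots)\le_{\text{lex}}(\hat y_k,\dots)$, hence $y^*_k\le\hat y_k<y^i_k$.

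\textbf{Main obstacle.} The delicate step is Step 4: passing from feasibility at the perturbed parameter $\e^i$ back to feasibility at $\e$. This uses the choice of $\delta$ below the minimal coordinate gap, together with the fact that $\hat y$ differs from $y^i$ in coordinate $i$. It also needs care with infinite components and with dummy images.
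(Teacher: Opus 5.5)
Your proposal is correct and is essentially the argument the paper intends. The paper only remarks that the proof of \Cref{thm:scions} carries over, and you carry it out: perturb to $\e+\delta e^i$, use that $y^i$ is feasible but not optimal there to obtain $\hat y$ with $\hat y_k<y^i_k$, and pull feasibility of $\hat y$ back to $\e$ via general position and the choice of $\delta$. Shrinking $\delta$ below the radius of a neighbourhood missing $\E{y^i}$ avoids having to argue, as the paper implicitly does, that the ball of the fixed radius $\delta$ already misses $\E{y^i}$. Your treatment of the case $y^i=d^i$ as vacuous under the natural convention for dummy images matches how the statement is used in the paper.
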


Next, we show that the scion order induces a directed tree that contains all viable combinations. The root of this tree is $(d^1,\dots,d^{k-1})$.
Therefore, we show that each viable combination, with the exception of $\vc{d}$, has exactly one precursor. 

The idea of the proof is the following: For a viable combination $\Z\in\V{Y_N}$, we search for another viable combination $\Y\in\V{Y_N}$ with $\Z=\scion{\Y}{\ell}$ for  some $\ell\in[k-1]$.
This means that the viable combinations $\Y$ and $\Z$ share $k-2$ images, i.\,e., for all $i\in[k-1]\backslash\{\ell\}$ it holds that $\Z^i=\Y^i$.
In addition, $\Z^\ell$ is the optimal image of $\emethod{\e(\Y)}$. Therefore, by \Cref{cor:bascis}.\ref{item: basic 2}, it holds that $\Z^\ell_k>\Y^i_k=\Z^i_k$.
That is, to find a precursor of $\Z$, we find the image with largest $k$-th objective function value and swap it for another image.
Hereby, the other image can be found by ``shooting'' a ray in direction $e^\ell$ until it ``hits'' the epsilon-component of an image. It is guaranteed to hit exactly one as $\E{d^\ell}$ lies in that direction ($\Z^\ell\ne d^\ell$  since $\Z\ne(d^1,\dots,d^k)$ and $d^\ell_\ell=\infty$) and the nondominated set is in general position.
\begin{theorem}\label{thm:at least one precursor}
	Let  $\Z\in\V{Y}$ with $\Z\ne\vc{d}$. Then, it is $$\left|\left\{\Y\in\V{Y_N}:\exists \ell\in[k-1] \text{ with } \Z=\scion{\Y}{\ell}\right\}\right|= 1.$$
\end{theorem}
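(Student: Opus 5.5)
Since $\Z\ne\vc{d}$, at least one $\Z^i$ is a nondominated image. By general position I can take the unique index $\ell\in[k-1]$ with $\Z^\ell_k$ maximal among the $\Z^i_k$, treating dummy images $d^i$ ($i\le k-1$) as having $k$-th component $-\infty$. Then $\Z^\ell\in Y_N$. The proof has two parts: existence of a precursor, and uniqueness.

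\emph{Uniqueness.} Suppose $\Z=\scion{\Y}{m}$ for some $\Y\in\V{Y_N}$ and $m\in[k-1]$. Then $\Z^m$ is the optimal image of $\emethod{\e(\Y)}$ and $\Z^i=\Y^i$ for $i\ne m$. By \Cref{cor:bascis}.\ref{item: basic 2}, $\Z^m_k>\Y^i_k=\Z^i_k$ for all $i\ne m$, which forces $m=\ell$. So $\Y$ agrees with $\Z$ outside position $\ell$, and it remains to show that $\Y^\ell$ is determined. Write $\e(\Y)=\e(\Z)+t e^\ell$ with $t=\Y^\ell_\ell-\Z^\ell_\ell$. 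By the scion condition, $\Z^\ell_\ell>\Y^\ell_\ell$, so $t<0$. Since $\Z^\ell$ is optimal for $\emethod{\e(\Y)}$, we have $\e(\Y)\in\E{\Z^\ell}$. Using \Cref{lem: star shaped} and the fact that $\Z^\ell$ becomes infeasible once the $\ell$-th component drops to $\Z^\ell_\ell$, the value $\Y^\ell_\ell$ should be the first $\ell$-th coordinate, above $\Z^\ell_\ell$ along the ray $\{\e(\Z)+se^\ell: s\ge 0\}$, at which the optimal image of the scalarization changes away from $\Z^\ell$. By general position this value identifies a unique image. I expect this to require care: I must show the ray leaves $\E{\Z^\ell}$ at a finite point, or in $\E{d^\ell}$, and that two different candidate precursors would give two different images of $E^\ell$-type at a single coordinate, contradicting general position.

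\emph{Existence.} I define $\Y$ explicitly: start from $\e(\Z)$ and move in direction $e^\ell$. Let $s^*$ be the supremum of the $s$ such that $\Z^\ell$ is optimal for $\emethod{\e(\Z)+se^\ell+\delta\mathbf{1}}$, and let $\bar y\in Y_N\cup\{d^\ell\}$ be the image whose $\ell$-th coordinate equals $\Z^\ell_\ell+s^*$. This $\bar y$ is the image that becomes feasible and lexicographically better past that point; if none exists, $s^*=\infty$ and $\bar y=d^\ell$. Set $\Y=(\Z^1,\dots,\bar y,\dots,\Z^{k-1})$, with $\bar y$ in position $\ell$. I then check three things.
\begin{enumerate}[label=(\alph*)]
\item $\Y$ is viable. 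For $i\ne\ell$ I use \Cref{re:important stuff}: if $\e(\Y)\notin\clE{\Z^i}{i}$, there would be an optimal image with $k$-th value below $\Z^i_k$, contradicting the optimality of $\Z^\ell$ or $\bar y$ near $\e(\Y)$. For $i=\ell$, membership in $\clE{\bar y}{\ell}$ follows from the construction via \Cref{lem: star shaped}. Before this, the inequalities of \Cref{cor:bascis}.\ref{item:basic 1} for $\Y$ must be verified, namely $\bar y_j<\Z^j_j$ for $j\ne\ell$; these come from $\bar y$ being feasible just above $\e(\Y)$.
\item The optimal image of $\emethod{\e(\Y)}$ is $\Z^\ell$. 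It is feasible, since $\Z^\ell_{-k}<\e(\Y)$ in every component by \Cref{cor:bascis}.\ref{item:basic 1} for $\Z$ together with $\bar y_\ell>\Z^\ell_\ell$. Any better image would be feasible slightly above $\e(\Z)$, contradicting $\e(\Z)\in\clE{\Z^\ell}{\ell}$, again via \Cref{re:important stuff}.
\item The scion condition $\Z^\ell_\ell>\Y^i_\ell$ holds: for $i\ne\ell$ it is \Cref{cor:bascis}.\ref{item:basic 1} for $\Z$, and for $i=\ell$ it holds by construction.
\end{enumerate}

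The main obstacle is making the ray argument rigorous, i.e.\ showing that the image $\bar y$ found along the ray also lies in the closures required in directions $i\ne\ell$ and has $\bar y_k<\Z^\ell_k$. I would handle this by exploiting the maximality of $\Z^\ell_k$ and repeatedly applying \Cref{re:important stuff} with the $\delta$-neighbourhood trick from the proof of \Cref{thm:scions}.
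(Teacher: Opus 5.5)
Your architecture matches the paper's. You pick $\ell=\argmax_i \Z^i_k$ and replace $\Z^\ell$ by the first image hit when moving from $\e(\Z)$ in direction $e^\ell$. You get viability of the entries $i\neq\ell$ from \Cref{re:important stuff} together with $\Z^i_k<\Z^\ell_k$, and you force $m=\ell$ via \Cref{cor:bascis}.\ref{item: basic 2}. As written, however, two steps fail.

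In the existence part, $s^*$ is a supremum over an empty set. Adding $\delta\mathbf{1}$ to \emph{every} coordinate makes each nondummy $\Z^i$, $i\neq\ell$, feasible for $\emethod{\e(\Z)+se^\ell+\delta\mathbf{1}}$: we have $\Z^i_i<\Z^i_i+\delta$, and $\Z^i_j<\Z^j_j$ for $j\neq i$ by \Cref{cor:bascis}.\ref{item:basic 1}. Since $\Z^i_k<\Z^\ell_k$ by your choice of $\ell$, $\Z^\ell$ is optimal for no $s$ once $\Z$ has a second nondummy entry. You must use the unperturbed ray $\e(\Z)+se^\ell$, $s>0$. Alternatively, as the paper does, define the replacement directly: let $y(\Z)$ minimize $y_\ell$ over $y\in Y_N\cup\{d^\ell\}$ with $y_\ell>\Z^\ell_\ell$ and $(\Z^1_1,\dots,y_\ell,\dots,\Z^{k-1}_{k-1})\in\clE{y}{\ell}$. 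This makes $\e(\Y)\in\clE{y(\Z)}{\ell}$ automatic.

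Also, in (b), a lexicographically better $y^*$ with $\Z^\ell_\ell<y^*_\ell<\bar y_\ell$ is not feasible near $\e(\Z)$. So $\e(\Z)\in\clE{\Z^\ell}{\ell}$ only rules out $y^*_\ell<\Z^\ell_\ell$. The other case must be excluded by the minimality of the replacement, which \Cref{re:important stuff} does not provide; the paper puts $y^*$ into the argmin set via \Cref{lem: star shaped}.\ref{item: star shaped 2}.

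The uniqueness part is not actually proved, and it starts from a misreading of \Cref{def:scions}. The scion condition only concerns $i\neq\ell$. Because $\Z^\ell$ is feasible for $\emethod{\e(\Y)}$, you in fact have $\Z^\ell_\ell<\Y^\ell_\ell$, i.e.\ $t>0$. The same slip appears in your (c): the inequality you claim ``by construction'' for $i=\ell$ is the reverse of what your construction gives, and it is not needed anyway.

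The step you leave as ``should be'' is the whole content, and it is short. Suppose $\W\neq\U$ are two precursors; they agree off position $\ell$, and w.l.o.g.\ $\W^\ell_\ell<\U^\ell_\ell$, so $\W^\ell\in Y_N$. Then \Cref{cor:bascis}.\ref{item:basic 1} for $\W$ gives $\W^\ell_i<\W^i_i=\U^i_i$ for $i\neq\ell$, so $\W^\ell$ is feasible for $\emethod{\e(\U)}$. Moreover $\W^\ell_k<\Z^\ell_k$ by \Cref{cor:bascis}.\ref{item: basic 2} for $\W$. Hence $\Z^\ell$ cannot be optimal for $\emethod{\e(\U)}$.

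The same two facts make your threshold picture rigorous. By \Cref{lem: star shaped}.\ref{item: star shaped 1}, $\Z^\ell$ is optimal at $\e(\Z)+se^\ell$ for $0<s\le\Y^\ell_\ell-\Z^\ell_\ell$. For larger $s$, the image $\Y^\ell$ is feasible and beats it. So $\Y^\ell_\ell$ is determined by $\Z$ alone.
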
 
\begin{proof}
    We start by showing that 
    $$\left|\left\{\Y\in\V{Y_N}:\exists \ell\in[k-1] \text{ with } \Z=\scion{\Y}{\ell}\right\}\right|\ge 1.$$
	Let $\ell=\argmax_{i\in[k-1]}\{\Z^i_k\}$ and $$y(\Z)=\argmin_{y\in Y_N\cup\{d^\ell\}}\{y_\ell : (\Z^1_1,\dots,y_\ell,\dots,\Z^{k-1}_{k-1})\in \clE{y}{\ell} \text{ and } y_\ell>\Z_\ell^\ell)\}.$$ Note that $y(\Z)$ is well defined since $d^\ell$ satisfies 
 $$(\Z^1_1,\dots,d^\ell,\dots,\Z^{k-1}_{k-1})\in \clE{d^\ell}{\ell} \text{ and } d^\ell_\ell>\Z_\ell^\ell.$$ 
 Furthermore, as $Y_N$ is in general position, the minimum is unique. 
 We show that 
 $$\Y=(\Z^1,\dots,\Z^{\ell-1},y(\Z),\Z^{\ell+1},\dots,\Z^{k-1})$$ 
 is a viable combination and $\Z^\ell=\scion{\Y}{\ell}$.
 Therefore, we must show that 
 \begin{enumerate}
     \item $\Z^\ell$ is the optimal image of $\emethod{\e(\Y)}$ and
     \item $\Y$ is a viable combination.
 \end{enumerate}
	First, we show that $\Z^\ell$ is the optimal image of $\emethod{\e(\Y)}$.
 For the sake of contradiction, suppose that it is not. 
 Then, there exists a $y^*\in Y_N\backslash\{Z^\ell\}$ that is optimal for $\emethod{\e(\Y)}$ instead. Hence, it holds that $y^*_\ell<y(\Z)_\ell$, $y^*_i<\Z^i_i$ for all $i\in[k-1]\backslash\{\ell\}$ and $y^*_k<\Z^\ell_k$. 
 Consequently, $\e(\Z)\in\clE{\Z^\ell}{\ell}$ (which holds because $\Z\in\V{Y_N}$) implies that $y^*_\ell>\Z^\ell_\ell$.
 In addition, by \Cref{lem: star shaped}.\ref{item: star shaped 2}, it holds that $$(\Z^1_1,\dots,\Z^{\ell-1}_{\ell-1},y^*_\ell,\Z^{\ell+1}_{\ell+1}\dots,\Z^{k-1}_{k-1})\in \clE{y^*}{\ell}$$ and, thus,   
	$$y^*\in\left\{ y\in Y_N\cup\{d^\ell\}: (\Z^1_1,\dots,y_\ell,\dots,\Z^{k-1}_{k-1})\in \clE{y}{\ell} \text{ and } y_\ell>\Z_\ell^\ell\right\}.$$ This contradicts the choice of $y(\Z)$ since $y^*_\ell<y(\Z)_\ell$. 

	Next, we show that $\Y$ is a viable combination.
    Suppose this is not the case.
    For $y(\Z)$, that $\e(\Y)\in\clE{y(\Z)}{\ell}$ follows directly from 
    $$y(\Z)\in \{y\in Y_N \cup \{d^\ell\} : (\Z^1_1,\dots,y_\ell,\dots,\Z^{k-1}_{k-1})\in \clE{y}{\ell} \text{ and } y_\ell>\Z_\ell^\ell)\}. $$ 

    Hence, there exists an $i\in[k-1]\backslash\{\ell\}$ with $\e(\Y)\notin\clE{\Z^i}{i}$. 
	
		\Cref{re:important stuff} implies that there exists a $y^*\in Y_N$ with $y^*_k<\Z^i_k<\Z^\ell_k$ that is feasible for $\emethod{\e(\Y)}$. Hence, $\Z^\ell$ is not optimal which is a contradiction.
    
	Thus, it holds that $\Y$ is a viable combination and, by \Cref{def:scions},  $\Z=\scion{\Y}{\ell}$.

    It remains to be shown that
    $$\left|\left\{\Y\in\V{Y_N}:\exists \ell\in[k-1] \text{ with } \Z=\scion{\Y}{\ell}\right\}\right|\le 1.$$
	Again, let $\ell=\argmax_{i\in[k-1]}\{\Z^i_k\}$. Then, by \Cref{cor:bascis}.\ref{item: basic 2}, for any precursor $\U$ of $\Z$ and all $i\in[k-1]\backslash\{\ell\}$, it holds that $\U^i=\Z^i$. Let $$\W,\U\in\left\{\Y\in\V{Y_N}:\exists \ell\in[k-1] \text{ with } \Z=\scion{\Y}{\ell}\right\}.$$ Suppose $\W\ne\U$, i.\,e., $\W^i=\U^i=\Z^i$ for $i\in[k-1]\backslash\{\ell\}$ and $\W^\ell\ne \U^\ell$. W.\,l.\,o.\,g., $\W^\ell_\ell<\U^\ell_\ell$. But then, by \Cref{cor:bascis}, $\W^\ell$ is feasible for $\e(\U)$ which implies that $\Z^\ell$ is not optimal. Hence, $\Z$ is not a scion of $\U$ which is a contradiction.
\end{proof}

Thus, the order as defined in \Cref{def:scions} induces a directed tree with viable combinations as vertices and root $\vc{d}$.
The induced tree for the instance from  \Cref{ex:introductory4D} is visualized in \Cref{fig:vc tree}.
\begin{figure}
	\centering
		\begin{tikzpicture}[scale = 1,node distance=1cm and 0.5cm]
			\node[draw,very thick] (A) at (0,0) {$(d^1,d^2,d^3)$};
			\node[draw, below left = of A,very thick] (B)  {$(y^1,d^2,d^3)$};
			\node[draw, below  = of A,very thick] (C)  {$(d^1,y^1,d^3)$};
			\node[draw, below right = of A,very thick] (D)  {$(d^1,d^2,y^1)$};
			\draw[->,very thick] (A) -- (B);
			\draw[->,very thick] (A) -- (C);
			\draw[->,very thick] (A) -- (D);
			\node[draw, below left = of B,very thick] (E)  {$(y^2,d^2,d^3)$};
			\node[draw, below  = of B,very thick] (F)  {$(y^1,y^2,d^3)$};
			\node[draw, below right = of B,very thick] (G)  {$(y^1,d^2,y^2)$};
			\draw[->,very thick] (B) -- (E);
			\draw[->,very thick] (B) -- (F);
			\draw[->,very thick] (B) -- (G);
			\node[draw, below left = of E,very thick] (H)  {$(y^3,d^2,d^3)$};
			\node[draw, below = of E,very thick] (I)  {$(y^2,d^2,y^3)$};
			\draw[->,very thick] (E) -- (H);
			\draw[->,very thick] (E) -- (I);
			\node[draw, below = of F,very thick] (J)  {$(y^1,y^3,d^3)$};
			\node[draw, below right = of F,very thick] (K)  {$(y^1,y^2,y^3)$};
			\draw[->,very thick] (F) -- (J);
			\draw[->,very thick] (F) -- (K);
		\end{tikzpicture}
	\caption{The tree as induced by the order described in \Cref{def:scions}, i.\,e., $G=(V,A)$ where $V=\V{Y_N}$ and $A=\{(\Y,\Z): \Z=\scion{\Y}{\ell} \text{ for some } \ell\in[k-1]\}$, for the nondominated set given in \Cref{ex:introductory4D}.}
	\label{fig:vc tree}
\end{figure}
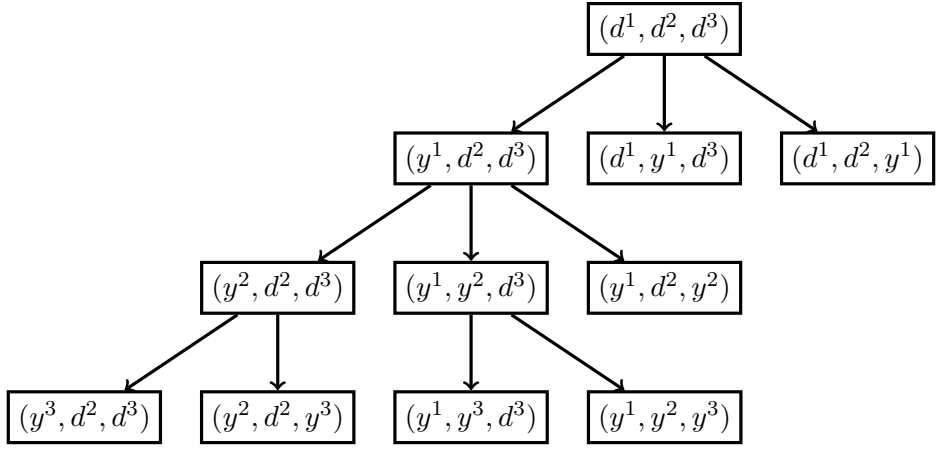

We now show that for each nondominated image $y$, there exists at least one viable combination $\Y$ such that $y$ is optimal for $\emethod{\e(\Y)}$. Thus, if we enumerate all viable combination and solve a lexicographic epsilon-constraint scalarization problem for each, we are guaranteed to compute the entire nondominated set.

The proof of the following theorem is visualized in \Cref{fig: finding vc}.
 We start with a parameter for which we know that $y$ is optimal and --- figuratively speaking --- iteratively shoot rays in the directions $e^1,\dots,e^{k-1}$. Each time, until we hit the epsilon component of an image. The respective images then constitute a viable combination $\Y(y)$ and $y$ is optimal for $\emethod{\e(\Y)}$.
 In addition, for all $i\in[k-1]$, it holds that $\Y(y)^i_{-[i]}<y_{-[i]}$.
 We later show that $\Y(y)$ is the only viable combination that has $y$ as optimal image and satisfies this. Therefore, this adds a nice criterion  when to save nondominated images  without ever having to check for duplicates.
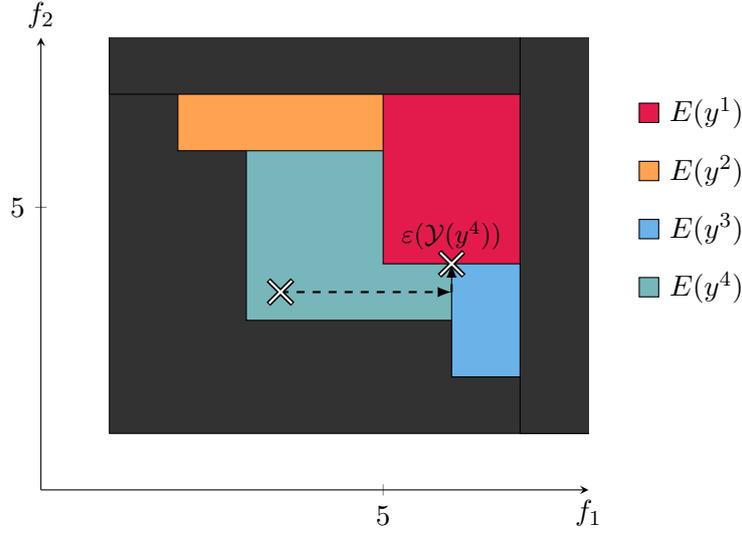
\begin{figure}[ht]
\centering
			\begin{tikzpicture}[scale = 1]
			
			\begin{axis}[
				axis lines=left,
				width=250pt,
				xmin=0,xmax=8,
				ymin=0,ymax=8,
				xtick ={5},
				ytick={5},
				xlabel={$f_1$},
				ylabel={$f_2$},	every axis x label/.style={
					at={(axis cs: 8,0,0)},
					anchor=north},
				every axis y label/.style={
					at={(axis cs: 0,8,0)},
					anchor=south
				}]
				\filldraw [fill=dummy,draw=black] (axis cs: 1,1) rectangle (axis cs:19,10);
                \filldraw [fill=i4,draw=black] (axis cs: 3,3) rectangle (axis cs: 19,10);
				\filldraw [fill=i3,draw=black] (axis cs: 6,2) rectangle (axis cs: 19,10);
				\filldraw [fill=i2,draw=black] (axis cs: 2,6) rectangle (axis cs: 19,10);
	
				\filldraw [fill=i1,draw=black] (axis cs: 5,4) rectangle (axis cs: 19,10);	
				\filldraw [fill=dummy,draw=black] (axis cs: 1,7) rectangle (axis cs: 19,10);
				\filldraw [fill=dummy,draw=black] (axis cs: 7,1) rectangle (axis cs: 19,10);
                \addplot[mark=x,mark options={scale=3.2}, line width=2pt] coordinates {(3.5,3.5)} ;
				\addplot[mark=x,white,mark options={scale=3},thick] coordinates {(3.5,3.5)} ;;
                \draw[-latex,dashed,thick] (axis cs: 3.5,3.5) -- (axis cs: 6,3.5);
                \draw[-latex,dashed,thick] (axis cs: 6,3.5) -- (axis cs: 6,4);

                \addplot[mark=x,mark options={scale=3.2}, line width=2pt] coordinates {(6,4)} ;
				\addplot[mark=x,white,mark options={scale=3},thick] coordinates {(6,4)} ;;
                \node[] at (axis cs: 6,4.5) {\small{$\e(\Y(y^4))$}};
			\end{axis}
     \node[rectangle,fill=i1,draw=black] (y1) at (8,5){};
		\node[right = of y1,xshift=-1cm] {$E(y^1)$};
		
		\node[rectangle,fill=i2,draw=black, below= of y1,yshift=0.5cm] (y2) {};
		\node[right = of y2,xshift=-1cm] {$E(y^2)$};
		
		\node[rectangle,fill=i3,draw=black, below= of y2,yshift=0.5cm] (y3) {};
		\node[right = of y3,xshift=-1cm] {$E(y^3)$};
		
		\node[rectangle,fill=i4,draw=black, below= of y3,yshift=0.5cm] (y4) {};
		\node[right = of y4,xshift=-1cm] {$E(y^4)$};
	\end{tikzpicture}

	\caption{We consider a tri-objective problem for which the nondominated set is given by $Y_N = \left\{ y^1= (5,4,1)^\top, y^2=(2,6,2)^\top, y^3=(6,2,4)^\top, y^4=(3,3,5)^\top\right\}.$ Then, the construction of $\Y(y^4)$ as in the proof of \Cref{thm: viable combination construction} can be interpreted as follows: Starting with $(y^4_1+\delta,y^4_2+\delta)$, we shoot a ray in direction $(1,0)$ until we ``hit'' the first epsilon-component, $\E{y^3}$. Then,  we ``shoot'' another ray in direction $(0,1)$ until we hit another epsilon component, $\E{y^1}$. Thus, $\Y(y^4)=(y^3,y^1)$.}
	\label{fig: finding vc}

\end{figure}
\begin{theorem}\label{thm: viable combination construction}
	Let $y\in Y_N$. Then, there exist $\Y(y)^1,\dots,\Y(y)^{k-1}\in Y_N\cup\{d^1,\dots,d^{k-1}\}$ such that the following hold:
	\begin{enumerate}[label=(\roman*)]
		\item \label{item: viable combination construction 1} For all $i\in[k-1]$, it holds that $\Y(y)^i_{-[i]}<y_{-[i]}$.
        \item \label{item: viable combination construction 3} The optimal image of $\emethod{\e(\Y(y))}$ is $y$.
		\item $\Y(y)=(\Y(y)^1,\dots,\Y(y)^{k-1})$ is a viable combination.
		
	\end{enumerate}
\end{theorem}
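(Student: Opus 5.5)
We construct $\Y(y)$ by the ray-shooting procedure illustrated in \Cref{fig: finding vc}, via induction over the coordinates. Set $\e^{(0)}\coloneqq y_{-k}+(\delta,\dots,\delta)$. By \Cref{lem: star shaped} and the choice of $\delta$, we have $\e^{(0)}\in\E{y}$: any image feasible for $\emethod{\e^{(0)}}$ satisfies $\bar y_{-k}\leqq y_{-k}$ by general position, so $y$ is optimal by nondominance. For $i=1,\dots,k-1$ we then move coordinate $i$ upward. Given $\e^{(i-1)}$ with $y$ optimal for $\emethod{\e^{(i-1)}}$, we define
$$\Y(y)^i\coloneqq\argmin_{\bar y\in Y_N\cup\{d^i\}}\Bigl\{\bar y_i:\ \bar y_i>y_i,\ \bar y_k<y_k,\ \bar y_j<\e^{(i-1)}_j\ \text{for all } j\in[k-1]\setminus\{i\}\Bigr\}.$$
This is the first image with smaller $k$-th value that becomes feasible as $\e_i$ increases. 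We then set $\e^{(i)}\coloneqq\e^{(i-1)}$ with the $i$-th coordinate replaced by $\Y(y)^i_i$. The set is nonempty because $d^i$ belongs to it, and the minimizer is unique by general position. By construction, $\e^{(k-1)}=\e(\Y(y))$.

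The invariant to maintain is that $y$ is optimal for $\emethod{\e^{(i)}}$. By the strict inequality and the minimality of $\Y(y)^i_i$, no $\bar y$ with $\bar y_k<y_k$ is feasible for $\emethod{\e^{(i)}}$. Meanwhile $y$ itself stays feasible, since $y_i<\Y(y)^i_i$. At the end this yields \ref{item: viable combination construction 3}.

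For \ref{item: viable combination construction 1}, fix $i$. The coordinates $j>i$ of $\e^{(i-1)}$ still equal $y_j+\delta$, so $\Y(y)^i_j<y_j+\delta$ gives $\Y(y)^i_j\le y_j$, and hence $\Y(y)^i_j<y_j$ by general position. Together with $\Y(y)^i_k<y_k$, this gives $\Y(y)^i_{-[i]}<y_{-[i]}$. For a dummy $d^i$ all these coordinates are $-\infty$, so the claim is trivial.

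The main obstacle is viability, namely $\e(\Y(y))\in\clE{\Y(y)^i}{i}$ for every $i$. First we check the hypothesis of \Cref{re:important stuff}, that $\Y(y)^i_j<\Y(y)^j_j$ for $j\ne i$. For $j<i$ this holds because $\Y(y)^i_j<\e^{(i-1)}_j=\Y(y)^j_j$. For $j>i$ we have $\Y(y)^i_j<y_j<\Y(y)^j_j$ by \ref{item: viable combination construction 1}. Now suppose $\e(\Y(y))\notin\clE{\Y(y)^i}{i}$. Then \Cref{re:important stuff} yields an optimal $y^*$ for $\emethod{\e(\Y(y))}$ with $y^*_k<\Y(y)^i_k<y_k$, which contradicts \ref{item: viable combination construction 3}.

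For dummy entries $\Y(y)^i=d^i$, we either verify membership directly or note that \Cref{re:important stuff} already covers dummies. The step needing the most care is confirming that the invariant survives later coordinate increases. This holds because increasing $\e_j$ for $j>i$ only adds images with $\bar y_j\ge y_j+\delta$, and the minimum in the definition of $\Y(y)^j$ already accounts for exactly these.
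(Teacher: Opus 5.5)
Your proof is correct. It uses the same iterated ray-shooting construction as the paper, with viability again deduced from \Cref{re:important stuff}. The differences lie in the selection rule and in the proof of (ii).

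\textbf{Selection rule.} The paper takes $\Y(y)^i$ to minimize $\bar y_i$ over $\Li^i(y)$, that is, over images whose closed epsilon-component contains the ray point $(\Y(y)^1_1,\dots,\Y(y)^{i-1}_{i-1},\bar y_i,y_{i+1}+\delta,\dots,y_{k-1}+\delta)$. You instead take the first image with $\bar y_k<y_k$ that becomes feasible along the ray. Given your invariant, both rules pick the same image. Your minimizer is optimal just beyond the ray point, so it lies in $\Li^i(y)$. Conversely, any nondominated member of $\Li^i(y)$ must beat $y$ in the $k$-th objective, because $y$ is strictly feasible near that point.

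\textbf{Consequences for (i) and (iii).} Building $\bar y_k<y_k$ into the definition pays off. Item (i) follows at once from the $\delta$-gap. In (iii), the image produced by \Cref{re:important stuff} contradicts (ii) immediately. The paper, by contrast, needs an additional neighbourhood argument to contradict $\Y(y)^j\in\Li^j(y)$.

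\textbf{Proof of (ii).} You maintain optimality of $y$ for $\emethod{\e^{(i)}}$ step by step. The paper argues once by contradiction: it lets $j$ be the largest index where a better $y^*$ exceeds $y$, and uses \Cref{lem: star shaped} to place $y^*$ in $\Li^j(y)$. The paper's closure-based formulation is the one that \Cref{cor:unique vc} later works with. Your feasibility-based formulation is more elementary and would serve there equally well.

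Two small points to tighten. In the invariant step, say explicitly that a feasible $\bar y$ with $\bar y_k<y_k$ and $\bar y_i\le y_i$ would already be feasible for $\emethod{\e^{(i-1)}}$; this is where the induction hypothesis is used. In the base case, the conclusion $\bar y_{-k}\leqq y_{-k}$ comes from the choice of $\delta$, not from general position alone.
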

\begin{proof}
	We iteratively define images $\Y(y)^1,\dots,\Y(y)^{k-1}$:	
    For $i\in[k-1]$, we define
	$$\Li^i(y)\coloneq \{ \bar{y}\in Y_N \cup \{d^i\}: y_i<\bar{y}_i,  (\Y(y)^1_1,\makebox[1.2em][c]{.\hfil.\hfil.},\Y(y)^{i-1}_{i-1},\bar{y}_i,y_{i+1}+{\delta},\makebox[1.2em][c]{.\hfil.\hfil.},y_{k-1}+{\delta})\in \clE{\bar{y}}{i}\}$$
	and we set
	$$\Y(y)^i \coloneqq \argmin_{\bar{y}\in \Li^i(y)} \{\bar{y}_i\}.$$
	   For all $i\in[k-1]$ it holds that $d^i\in \Li^i(y)$. Thus, all $\Y(y)^i$ are well defined and, since we assume that $Y_N$ is in general position, also unique. In addition, from the construction of the $\Y(y)^i$ it follows that $\Y(y)^i_j<\Y(y)^j_j$ for all $i,j\in[k-1]$ with $j\ne i$. 
	\begin{enumerate}[label=(\roman*)]
		\item If $\Y(y)^i_j>y_j$  for some $i<j\le k$, then  $$(\Y(y)^1_1,\dots, \Y(y)^i_i,y_{i+1}+\delta,\dots,y_{k-1}+\delta)\notin \clE{\Y(y)^i}{i}$$  which contradicts $\Y(y)^i\in\Li^i(y)$.
        \item By construction, $y$ is feasible for $\emethod{\e(\Y(y))}$.
        Suppose it is not optimal. Thus, there exists a $y^*\in Y_N$ with $y^*_i<\Y(y)^i_i$ for all $i\in[k-1]$ and $y^*_k<y_k$. Furthermore, as $y\in Y_N$, there exists $J\subseteq[k-1]$ with $y^*_j>y_j$ for all $j\in J$. Let $j=\max J$. Then, we have that $\e(\Y(y))\in\E{y^*}$ and $$(\Y(y)^1_1,\dots,\Y(y)^{j-1}_{j-1},y^*_j,y_{j+1}+\delta,\dots,y_{k-1}+\delta)\in[y^*,\e(\Y(y)].$$ Therefore, by \Cref{lem: star shaped}.\ref{item: star shaped 2}, it is 
		$$(\Y(y)^1_1,\dots,\Y(y)^{j-1}_{j-1},y^*_j,y_{j+1}+\delta,\dots,y_{k-1}+\delta)\in \clE{y^*}{j}$$ and $y^*\in\Li^j(y)$ which contradicts the choice of $\Y(y)^j$.
		\item Suppose $\Y(y)$ is not a viable combination. Thus, for some $j\in[k-1]$, it holds that $\e(\Y(y))\notin\clE{\Y(y)^j}{j}$. Hence, by \Cref{re:important stuff}, there exists a $y^*\in Y_N$ with $y^*_k<\Y(y)^j_k$ and $y^*_i<\Y(y)^i_i$ for all $i\in[k-1]$ that is optimal for $\emethod{\e(\Y(y))}$. 
		Thus, by \ref{item: viable combination construction 3}, it holds that $y=y^*$. But this implies that 
		$$(\Y(y)^1_1,\dots,\Y(y)^j_j,y_{j+1}+\delta,\dots, y_{k-1}+\delta) \notin \clE{\Y(y)^j}{j}$$
		since $y$ is feasible for $\emethod{\e}$ for all $$\e\in B_{\frac{\delta}{2}}((\Y(y)^1_1,\dots,\Y(y)^j_j,y_{j+1}+\delta,\dots, y_{k-1}+\delta))$$ 
		and $y_k=y^*_k< \Y(y)^j_k$.
		Thus, $\Y(y)^j\notin \Li^j(y)$ which is a contradiction.
		
	\end{enumerate}
	
\end{proof}

We show that $\Y(y)$ is the only viable combination that satisfies Condition~\ref{item: viable combination construction 1} and \ref{item: viable combination construction 3} of \Cref{lem: star shaped}. 

\begin{corollary}\label{cor:unique vc}
	Let $y\in Y_N$. Then, there exists exactly one $\Y\in\V{Y_N}$ such that $y$ is optimal for $\emethod{\e(\Y)}$ and $\Y^i_{-[i]}<y_{-[i]}$ for all $i\in[k-1]$.
\end{corollary}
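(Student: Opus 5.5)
Existence is immediate from \Cref{thm: viable combination construction}: $\Y(y)$ is a viable combination, $y$ is optimal for $\emethod{\e(\Y(y))}$, and $\Y(y)^i_{-[i]}<y_{-[i]}$ for all $i\in[k-1]$. The work is uniqueness. Let $\Z\in\V{Y_N}$ be any viable combination with $y$ optimal for $\emethod{\e(\Z)}$ and $\Z^i_{-[i]}<y_{-[i]}$ for all $i$. I would show by induction on $i=1,\dots,k-1$ that $\Z^i=\Y(y)^i$. The precise claim is that $\Z^i$ belongs to $\Li^i(y)$, where $\Li^i(y)$ is built with the already identified entries $\Z^1_1,\dots,\Z^{i-1}_{i-1}=\Y(y)^1_1,\dots,\Y(y)^{i-1}_{i-1}$, and that $\Z^i$ is the $\argmin$ of $\bar y_i$ over that set. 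Uniqueness of the minimizer (general position) then gives $\Z^i=\Y(y)^i$.

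For the membership part, note first that $y_i<\Z^i_i$, since $y$ is feasible for $\emethod{\e(\Z)}$. Next I need
\[
\e'\coloneqq(\Z^1_1,\dots,\Z^i_i,y_{i+1}+\delta,\dots,y_{k-1}+\delta)\in\clE{\Z^i}{i}.
\]
Here I would argue by contradiction via \Cref{re:important stuff}. Its hypothesis holds: for $j<i$ this is \Cref{cor:bascis}.\ref{item:basic 1}, and for $j>i$ it is $\Z^i_j<y_j<y_j+\delta$ by the assumed condition $\Z^i_{-[i]}<y_{-[i]}$. If $\e'\notin\clE{\Z^i}{i}$, we obtain an image $y^*$ optimal for $\emethod{\e'}$ with $y^*_k<\Z^i_k$. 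Also $\Z^i_k<y_k$, by \Cref{cor:bascis}.\ref{item: basic 2} applied to $\Z$ and its optimal image $y$. So $y^*_k<y_k$. Moreover $y^*$ satisfies $y^*_j<\Z^j_j$ for $j\le i$ and $y^*_j\le y_j$ for $j>i$, using $\delta$ and general position. Hence $y^*$ is feasible for $\emethod{\e(\Z)}$ provided that $y_j<\Z^j_j$ for $j>i$, which holds by feasibility of $y$. Since $y^*_k<y_k$, this contradicts optimality of $y$.

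For minimality, suppose some $\bar y\in\Li^i(y)$ has $\bar y_i<\Z^i_i$. Define $\e''$ as $\e'$ but with $i$-th entry $\bar y_i$. Since $\e''\in\clE{\bar y}{i}$, Lemma-type closure arguments give $\bar y_j\le$ the corresponding components, so $\bar y_{-k}<\e(\Z)$ after small perturbation, i.e.\ $\bar y$ is feasible for $\emethod{\e(\Z)}$. I would then compare $\bar y_k$ with $y_k$. If $\bar y_k<y_k$, this contradicts optimality of $y$. If $\bar y_k>y_k$, then $y$ itself is feasible near $\e''$ (because $y_i<\bar y_i$ and $y_j<y_j+\delta$) and lexicographically better, contradicting $\e''\in\cl{\E{\bar y}}$. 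Dummy images are handled trivially since $d^i_i=\infty$.

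The main obstacle is this minimality step. It requires care to produce strict feasibility inequalities from closure membership, using $\delta$-neighborhoods as in \Cref{cor:bascis}. I would mimic the $B_\delta$ arguments from the proof of \Cref{thm:scions}.
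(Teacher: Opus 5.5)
Your proof is correct and has the same skeleton as the paper's: induction on $i$, first showing $\Z^i\in\Li^i(y)$, then showing that $\Z^i$ minimizes $\bar y_i$ over $\Li^i(y)$. Both sub-steps, however, are argued differently.

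For membership, the paper just notes that $\e'\in[\Z^i_{-k},\e(\Z)]$ and invokes \Cref{lem: star shaped}.\ref{item: star shaped 2}, in its closure version since $\e(\Z)$ lies only in $\cl{\E{\Z^i}}$. You argue by contradiction through \Cref{re:important stuff} instead. That works, but the corollary as stated requires every coordinate of the parameter to be of the form $y^j_j$ for an image $y^j$, which fails for the entries $y_j+\delta$, $j>i$. Its underlying argument (perturb slightly in direction $e^i$) only uses $\Z^i_j<\e'_j$ for $j\neq i$ and general position in coordinate $i$, so it carries over, but you should say so explicitly.

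For minimality, the paper never considers an arbitrary $\bar y\in\Li^i(y)$; it compares $\Z^i$ only with the actual minimizer $\Y(y)^i$. If $\Y(y)^i_i<\Z^i_i$, then $\Y(y)^i$ is feasible for $\emethod{\e(\Z)}$, because $\Y(y)^i_{-[i]}<y_{-[i]}$ and $\Y(y)^i_j<\Y(y)^j_j=\Z^j_j$ for $j<i$. Moreover $\Y(y)^i_k<y_k$ by \Cref{cor:bascis}.\ref{item: basic 2} applied to $\Y(y)$, which contradicts optimality of $y$. This takes two lines because it reuses what \Cref{thm: viable combination construction} already proved about $\Y(y)$, so the step you flagged as the main obstacle essentially disappears.

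Your version works from the definition of $\Li^i(y)$ alone. Every nondominated $\bar y\in\Li^i(y)$ has $\bar y_k<y_k$, because $y$ is strictly feasible around $\e''$; and $\bar y_i<\Z^i_i$ would make $\bar y$ feasible for $\emethod{\e(\Z)}$. This makes your uniqueness part independent of the viability and optimality properties of $\Y(y)$. The price is the closure-to-strict-feasibility bookkeeping. In particular, $\bar y_j<\Z^j_j$ for $j<i$ needs $\bar y\neq\Z^j$, which you should state and which follows from $\bar y_i>y_i>\Z^j_i$.
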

\begin{proof}
	Let $\Y(y)$ be the viable combination constructed in the proof of \Cref{thm: viable combination construction}. We have already shown that $y$ is optimal for $\emethod{\e(\Y(y))}$ and that $\Y(y)^i_{-[i]}<y_{-[i]}$ for all $i\in[k-1]$. 
	Let $\Z$ be a viable combination such that  $y$ is optimal for $\emethod{\e(\Z)}$ and $\Z^i_{-[i]}<y_{-[i]}$ for all $i\in[k-1]$. We show that $\Z^i=\Y(y)^i$ for all $i\in[k-1]$ by induction.
	\begin{itemize}
		\item $i=1$: We have that $(\Z^1_1,y_2+{\delta},\dots,y_{k-1}+{\delta})\in [\Z^1_{-k},\e(\Z)]$. Thus, \Cref{lem: star shaped}.\ref{item: star shaped 2} states that $(\Z^1_1,y_2+{\delta},\dots,y_{k-1}+{\delta})\in\clE{\Z^1}{1}$. Therefore, $\Z^1\in\Li^1(y)$. Suppose $\Z^1\ne \Y(y)^1$. Thus, $\Z^1_1>\Y(y)^1_1$. Since $\Y(y)^1_{-1}<y_{-1}$ and $y$ is feasible for $\emethod{\e(\Z)}$, it holds that $\Y(y)^1$ is feasible as well. Hence, as $y_k>\Y(y)^1_k$ by \Cref{cor:bascis}.\ref{item: basic 2}, $y$ cannot be optimal which is a contradiction. Thus, it holds that $\Y(y)^1=\Z^1$.
		\item $i>1$: By induction, for all $j<i$ it holds that $\Z^j=\Y(y)^j$ . Furthermore, since $\Z$ is a viable combination it holds that $\Z^i_j< \Z^j_j= \Y(y)^j_j$. Thus, $$(\Y(y)_1,\dots,\Y(y)_{i-1},\Z^i_i,y_{i+1}+{\delta},\dots,y_{k-1}+{\delta})\in [\Z^i_{-k},\e(\Z)].$$ Consequently, again by \Cref{lem: star shaped}.\ref{item: star shaped 2}, it is $\Z^i\in\Li^i(y)$. Combining this with $\Z^i\ne \Y(y)^i$ implies $\Y(y)^i_i<\Z^i_i$. Thus, since $\Y^i_{-[i]}<y_{-[i]}$, $\Y(y)^i$ is feasible for $\emethod{\e(\Z)}$ which, again, contradicts that $y$ is optimal. Thus, it holds that $\Z^i=\Y(y)^i$.
	\end{itemize}
\end{proof}

Therefore, we already have the basic outline of PEA: We start with the viable combination $(d^1,\dots,d^{k-1})$. Then, we solve the corresponding scalarization, generate scions and repeat. We only store optimal images if \Cref{cor:unique vc} is satisfied. Thus, each nondominated image is stored exactly once, and we do not need to check for duplicates. What remains to be shown is that the cardinality of the set of all viable combinations is in $\mathcal{O}(|Y_N|^{\lfloor\frac{k}{2}\rfloor})$. To this end, we link the set of viable combinations to the upper bound set of $Y_N$.

\subsection{Viable Combinations and Upper Bounds}\label{sec:upper bounds}
In this section, we explore the connection  between viable combinations and the representation of the  \emph{search region} - the subset of the image space that potentially contains further nondominated images, by so-called upper bounds, first introduced by Przybylski et al.~\cite{Przybylski2010a}.  

Formally, for a set $N\subseteq Y_N$, the search region is given by
$$
S(N)\coloneqq \left\{z \in\mathbb{R}^k: y \nleqq z \text{ for all } y\in N\right\}.
$$
Then, local upper bounds that describe the search region can be defined as follows.
\begin{definition}[Klamroth et al.~\cite{Klamroth2015}]\label{cor:upper bounds}
	Let $N\subseteq Y$ be a finite set of images and $u\in\mathbb{R}^k$ satisfy the following:
	\begin{enumerate}[label=(\roman*)]
		\item There is no $y\in N$ such that $y<u$.
		\item\label{item:upper bounds 2} There exist $k$ points of $N\cup\{d^1,\dots,d^{k}\}$, denoted by $y^1(u),\dots,y^k(u)$, such that $\begin{cases}
			y^i_i(u)=u_i\\
			y^i_{-i}(u)<u_{-i}
			
		\end{cases}, i=1,\dots,k.$
	\end{enumerate}
	Then, $u$ is a \emph{local upper bound} with respect to $N$. The set of all local upper bounds with respect to $N$ is denoted by $U(N)$. The images $y^1(u),\dots,y^k(u)$ are called the \emph{defining points} of $u$.
\end{definition}
Every local upper bound $u\in U(N)$ defines a \emph{search zone} $C(u)\subseteq \mathbb{R}^k$:
$$C(u)=\{z\in\mathbb{R}^k: z < u\}$$
and the search region can be written as 
$$S(N)=\bigcup_{u\in U(N)} C(u).$$
An upper bound based method to compute the nondominated set works as follows. Starting with $N=\emptyset$, such a method iteratively picks a local upper bound  $u\in U(N)$  and explores the associated search zone $C(u)$ with an appropriate scalarization problem. Then, if a new nondominated image $y$ is found, the updated local upper bound set $U(N\cup \{y\})$  is computed and $y$ is added to $N$. Otherwise, $C(u)$ is empty. Hereby, the update step is of crucial importance. Klamroth et al.~\cite{Klamroth2015} propose two different update strategies, one based on redundancy elimination and the other one based on redundancy avoidance. The latter uses defining points: When a new nondominated image $y$ is identified $u$ is replaced by new local upper bounds that are generated by verifying \Cref{cor:upper bounds}.\ref{item:upper bounds 2}. The search zones given by upper bounds in $U(N)$ intersect. Hence, there might be another $u'\in U(N)$ such that $y\in C(u')$. 
In this case, $u'\notin U(N\cup \{y\})$ and it needs to be replaced as well.


 D{\"a}chert et al.~\cite{Daechert2017} propose the following neighborhood structure among local upper bound to efficiently identify the $u'\in U(N)$ with $y\in C(u')$.
\begin{definition}[D{\"a}chert et al.~\cite{Daechert2017}]\label{def:upper bounds neighborhood}
    Let $u,u'\in U(N)$. Then, $u$ and $u'$ are neighbors if they share $k-1$ defining points, i.\,e., there exist $\ell,j\in[k]$ with $\ell\ne j$ such that for all $i\in[k]\backslash\{\ell,j\}$ it holds that $y^i(u)=y^i(u')$ and $y^j(u)=y^\ell(u')$. Then, $u$ is the $j$-neighbor of $u'$ and $u'$ is the $\ell$-neighbor of $u$.
    \end{definition}

The literature on computational geometry offers some insight on the cardinality of $U(Y_N)$: From Boissonnant et al.~\cite{Boissonnat1998} and Bringmann~\cite{Bringmann2013a} it can be derived that $|U(Y_N)|\in\mathcal{O}(|Y_N|^{\lfloor \frac{k}{2} \rfloor})$. We show that we can map viable combinations onto $U(Y_N)$ via an injective function $g$. This provides insight into the cardinality of $\V{Y_N}$ and relates the scion order to the order described in \Cref{def:upper bounds neighborhood}.

\begin{theorem}\label{thm:map to upper bounds}
	Let $\Y\in \V{Y_N}$ and let $\Y^k$ be the optimal image of $\emethod{\e(\Y)}$. Then, $(\Y^1_1,\dots,\Y^k_k)\in U(Y_N)$.
\end{theorem}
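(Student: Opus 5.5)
Set $u\coloneqq(\Y^1_1,\dots,\Y^{k-1}_{k-1},\Y^k_k)$, where $\Y^k=y^*$ is the optimal image of $\emethod{\e(\Y)}$. The plan is to verify the two conditions of \Cref{cor:upper bounds} with $N=Y_N$, using $\Y^1,\dots,\Y^k$ themselves as the defining points, i.e. $y^i(u)=\Y^i$ for $i\in[k]$. Condition $y^i_i(u)=u_i$ holds by construction. So the work lies in (a) the strict inequalities $y^i_{-i}(u)<u_{-i}$ and (b) the absence of any $y\in Y_N$ with $y<u$.

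For (a), fix $i\in[k-1]$ and split the coordinates $j\ne i$ into two groups.
\begin{itemize}
\item For $j\in[k-1]\setminus\{i\}$, \Cref{cor:bascis}.\ref{item:basic 1} gives $\Y^i_j<\Y^j_j=u_j$.
\item For $j=k$, \Cref{cor:bascis}.\ref{item: basic 2} gives $\Y^i_k<y^*_k=u_k$.
\end{itemize}
For $i=k$, feasibility of $y^*$ for $\emethod{\e(\Y)}$ gives exactly $y^*_{-k}<\e(\Y)=u_{-k}$.

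Dummy images need a separate check. If $\Y^i=d^i$, then $u_i=\infty$ and all other entries of $d^i$ are $-\infty$, so the inequalities hold trivially. Here I will use the convention that $\infty$ components of $u$ are permitted, as in Klamroth et al.

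For (b), argue by contradiction. Suppose $y\in Y_N$ with $y<u$. Then $y_{-k}<\e(\Y)$, so $y$ is feasible for $\emethod{\e(\Y)}$. Also $y_k<u_k=y^*_k$. Under general position this means $y$ is lexicographically better than $y^*$, contradicting the optimality of $y^*$. If $y^*=d^k$ (the scalarization is infeasible), then no $y$ is feasible at all and (b) holds trivially. In that case $u_k=\infty$, and $d^k$ serves as the $k$-th defining point.

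I expect the main obstacle to be this bookkeeping at the boundary: dummy images and infinite entries of $u$. In particular, \Cref{cor:bascis}.\ref{item: basic 2} must be checked when some $\Y^i$ are dummies, since $d^i_k=-\infty$ makes the inequality trivial. Everything else is a direct application of \Cref{cor:bascis} and the definition of optimality.
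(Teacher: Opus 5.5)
Your proof is correct and follows the paper's argument: condition (i) holds because any $y<u$ would be feasible for $\emethod{\e(\Y)}$ with $y_k<\Y^k_k$, contradicting optimality, and condition (ii) follows from both parts of \Cref{cor:bascis}. For $i=k$ you correctly invoke feasibility of $\Y^k$ for $\emethod{\e(\Y)}$, which \Cref{cor:bascis} does not cover even though the paper cites only that corollary, and you also handle dummies and the infeasible case, which the paper does not; the appeal to general position in (b) is unnecessary, since $y_k<y^*_k$ alone already makes $y$ lexicographically better.
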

\begin{proof}
	Any $y<u\coloneqq(\Y^1_1,\dots,\Y^k_k)$ would be feasible for $\emethod{\e(\Y)}$ with $y_k<\Y^k_k$ which contradicts that $\Y^k$ is optimal for $\emethod{\e(\Y)}$. Furthermore, by \Cref{cor:bascis},  $\Y^1,\dots,\Y^k$ satisfy $$\begin{cases}
		\Y^i_i=u_i\\
		\Y^i_{-i}<u_{-i}
	\end{cases}, i=1,\dots,k.$$ Thus, by \Cref{cor:upper bounds}, $u\in U(Y_N)$. 
\end{proof}
\Cref{thm:map to upper bounds} implies that the function
$$g: \V{Y_N} \to U(Y_N), \Y \mapsto (\Y^1_1,\dots,\Y^k_k),$$
where $\Y^k$ is the optimal image of $\emethod{\e(\Y)}$, is injective. Therefore, $|\V{Y_N}|\le |U(Y_N)|$ and $|\V{Y_N}|\in\mathcal{O}(|Y_N|^{\lfloor\frac{k}{2}\rfloor})$. In addition, by \Cref{thm:map to upper bounds}, the viable parameters are projections of the local upper bounds w.\,r.\,t.\ $Y_N$ and the corresponding viable combination corresponds to the defining points $y^1(u),\dots,y^{k-1}(u)$. Furthermore, for two viable combinations $\Y,\Z$ it holds that $\Y$ is the $\ell$-th scion of $\Z$, if and only if $g(\Y)$ is the $\ell$-neighbor of $g(\Z)$ and $g(\Z)$ is the $k$-neighbor of $g(\Y)$. Thus, the tree induced by the scion order is equivalent to a subgraph of the graph induced by neighborhood structure among local upper bound w.\,r.\,t.\ \Cref{def:upper bounds neighborhood}. 

However, while D{\"a}chert et al.~\cite{Daechert2017} iteratively update the neighborhood structure and utilize it to efficiently update the search region, PEA directly enumerates the nondominated images according to a subtree of the graph induced by the neighborhood structure of  the local upper bounds set of the entire nondominated set, i.\,e., the neighborhood structure once all nondominated images have already been found.

In addition, while the generation of new local upper bounds and new viable combinations is similar there is a key difference: Upper bound based methods use parameters  in $\mathbb{R}^k$ to solve scalarization problems. Meanwhile, PEA works with parameters in $\mathbb{R}^{k-1}$.
Let $u$ be a local upper bound w.\,r.\,t.\ some $N\subseteq Y_N$. When a new image $y$ is discovered in $C(u)$ it means that no other nondominated image $\bar{y}$ with $\bar{y}\ge y$ exists. Thus, no more scalarization problems need to be solved for any parameter $u'> y$. That is, all $u'\in U(N)$ with  $u'> y$ need to be identified and replaced with local upper bounds w.\,r.\,t $N\cup\{y\}$.
In contrast, let $\Y$ be a viable combination of $Y_N$. When PEA solves $\emethod{\e(\Y)}$ and discovers a nondominated image it is not necessarily new, i.\,e., it might have been computed before. In addition, it only follows that $y$ is optimal for all parameters in $(y_{-k},\e(\Y)]$. That is, for all other viable combinations, solving a scalarization problem for the respective viable parameter might still yield a new nondominated images and PEA only generates new viable combinations and does not replace any.

Thus, working with viable combinations is more suited for parallelization, or at least, requires less communication between different threads: The viable combinations can be processed independently while the local upper bounds cannot.

\section{Viable Combinations in the Generic Case }
\label{sec:not general position} 
In \cref{sec:general position}, we need the property that the nondominated set is in general position.
However, while this assumption greatly helps in simplifying technical proofs, it is not a necessary condition for PEA\@.
In this section, we consider the case where the nondominated set $Y_N$ is not in general position. The idea is the following: We construct a related nondominated set $\Phi(Y_N)$ that is in general position. Hereby, $\Phi : Y_N\to \mathbb{R}^k$ is a function that slightly modifies each image. Since $\Phi(Y_N)$ is in general position, all results from \Cref{sec:general position}
 hold. For each viable combination $\Phi(\Y)$ of $\Phi(Y_N)$, we consider the preimage $\Y$. We show that it is a viable combination of $Y_N$ and an image $y$ is optimal for $\emethod{\e(\Y)}$ if and only if $\Phi(y)$ is optimal for $\emethod{\e(\Phi(\Y))}$. Thus, since the enumeration of all viable combinations $\Phi(Y_N)$ and solving a scalarization problem is sufficient to compute $\Phi(Y_N)$, it is also sufficient to enumerate the preimages to compute $Y_N$.

 The set $\Phi(Y_N)$ is only a means to an end and only used as a tool for the proofs in this section. It can be constructed as follows:
Let $K\coloneqq |Y_N|$. We assume that the nondominated images are ordered such that 
\begin{align}\label{eq:order}
	(y_k^1,\dots,y^1_1)\llex (y_k^2,\dots,y^2_1)\llex \dots \llex (y^K_k,\dots,y^K_1).
\end{align}
We define $\Phi(Y_N):=\{\Phi(y^1),\dots,\Phi(y^K)\}$ where for all $i\in[K]$ and $j\in[k]$ we set
$$
\Phi(y^i)_j\coloneqq\begin{cases} y^i_j + \frac{i\delta}{K+1},& \text{ if there exists a } s<i \text{ with } y^s_j=y^i_j, \\
	y^i_j, & \text{ else.} \end{cases}
$$

Under $\Phi$, all nondominated images remain nondominated and the order as given in \Cref{eq:order} is preserved.

\begin{proposition}\label{prop:not general position}
	For all $y,\bar{y}\in Y_N$ with $y\ne\bar{y}$ the following hold:
	\begin{enumerate}[label=(\roman*)]
		\item $\Phi(y)\nleq \Phi(\bar{y})$.
		\item \label{item: not general position 2} $(y_k,\dots,y_1)\llex(\bar{y}_k,\dots,\bar{y}_1)$ implies $(\Phi(y)_k,\dots,\Phi(y)_1)\llex(\Phi(\bar{y})_k,\dots,\Phi(\bar{y})_1)$.
	\end{enumerate}
\end{proposition}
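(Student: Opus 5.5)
The key observation is that $\Phi$ moves each coordinate by less than $\delta$ and only increases it: for all $i\in[K]$ and $j\in[k]$,
$$0\le \Phi(y^i)_j-y^i_j\le \frac{i\delta}{K+1}<\delta .$$
Recall that $\delta$ is smaller than every nonzero difference $|y_j-\bar y_j|$ between distinct nondominated images. So whenever $y_j\ne\bar y_j$, the perturbation cannot reverse the strict order: $y_j<\bar y_j$ forces $\bar y_j-y_j>\delta$, and hence
$$\Phi(y)_j< y_j+\delta<\bar y_j\le\Phi(\bar y)_j .$$
Strict inequalities between distinct coordinate values are therefore preserved. The only work is to handle ties $y_j=\bar y_j$, where the perturbation decides the order. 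For a tie, write $y=y^s$ and $\bar y=y^i$ with $s<i$ in the order \eqref{eq:order}. I will check that $\Phi(y^s)_j<\Phi(y^i)_j$. Since there is an earlier index with the same $j$-th value, $\Phi(y^i)_j=y^i_j+\frac{i\delta}{K+1}$. The value $\Phi(y^s)_j$ is either $y^s_j$ or $y^s_j+\frac{s\delta}{K+1}$, and both are strictly smaller. So among tied coordinates, the image that comes later in the lexicographic order gets the strictly larger perturbed value. In particular $\Phi$ is injective and $\Phi(Y_N)$ is in general position.

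For (i), suppose $\Phi(y)\leq\Phi(\bar y)$ with $y\ne\bar y$. Take any coordinate $j$. If $y_j\ne\bar y_j$, then by preservation of strict order $y_j<\bar y_j$. If $y_j=\bar y_j$, then $y_j\le\bar y_j$ trivially. Together these give $y\leqq\bar y$, and $y\neq\bar y$ then gives $y\le\bar y$. This contradicts the nondominance of $\bar y$. The argument does not even need the tie analysis.

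For (ii), let $(y_k,\dots,y_1)\llex(\bar y_k,\dots,\bar y_1)$, so $y=y^s$ and $\bar y=y^i$ with $s<i$. By the tie analysis, $\Phi(y)_j<\Phi(\bar y)_j$ for every coordinate $j$ with $y_j=\bar y_j$. Now let $m$ be the first index in the order $k,k-1,\dots$ at which $y$ and $\bar y$ differ, so $y_m<\bar y_m$. I see two ways to conclude. The first: if $m=k$, strict order is preserved at coordinate $k$ and we are done. If $m<k$, then $y_k=\bar y_k$ and the tie rule gives $\Phi(y)_k<\Phi(\bar y)_k$, which again decides the lexicographic comparison at the first coordinate. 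The second, equivalent view is that the perturbed images are in general position, so the comparison is decided at coordinate $k$; either $y_k<\bar y_k$ or $y_k=\bar y_k$, and in both cases the inequality in coordinate $k$ is preserved.

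The main obstacle is the bookkeeping in the tie case. One must verify that $\Phi(y^s)_j$ is indeed below $\Phi(y^i)_j$ both when $y^s$ was itself perturbed (because of a still earlier tie) and when it was not. One must also check that the bound $\frac{i\delta}{K+1}<\delta$ keeps perturbed values from crossing any other distinct value. Both follow from $s<i\le K$.
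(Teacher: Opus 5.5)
Your proposal is correct and follows essentially the same route as the paper. For (i), the key fact is that a perturbation of at most $\frac{K\delta}{K+1}<\delta$ cannot reverse a strict inequality between distinct coordinate values; the paper applies it directly at a coordinate $j$ with $\bar y_j<y_j$, while you apply it by contradiction. For (ii), you decide the comparison at coordinate $k$ with the same case split as the paper, $y_k<\bar y_k$ versus $y_k=\bar y_k$, where $s<i$ makes the later image's perturbation strictly larger. You are also right to read $\delta$ as bounding only the nonzero coordinate differences, since outside general position the literal minimum in the paper's definition of $\delta$ would be zero.
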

\begin{proof} \phantom{linebreak}
	
	\begin{enumerate}[label=(\roman*)]
		\item  Since both $y$ and $\bar{y}$ are nondominated, there exists a $j\in[k]$ with $\bar{y}_j<y_j$. Then, 
		$$\Phi(\bar{y})_j \le \bar{y}_j+\frac{K\delta}{K+1}< \bar{y}_j+\delta < y_j\le \Phi(y)_j.$$ 
		Thus, it is $\Phi(y)\nleq \Phi(\bar{y})$.
		\item By construction, there exist $s,t\in[K]$ with $s<m$ such that the following hold for all $i\in[k]$:
		\begin{itemize}
			\item If $y_i=\bar{y}_i$, it holds that $$\Phi(y)_i\le y_i + \frac{s\delta}{K+1}<\bar{y}_i+\frac{t\delta}{K+1}=\Phi(\bar{y})
			_i.$$
			\item If $y_i<\bar{y}_i$, it holds that $$\Phi(y)_i\le y_i + \frac{s\delta}{K+1}<\bar{y}_i\le \Phi(\bar{y})_i.$$
		\end{itemize}
		Specifically,  it holds that $\Phi(y)_k<\Phi(\bar{y})_k$. Thus, it is $$(\Phi(y)_k,\dots,\Phi(y)_1)\llex(\Phi(\bar{y})_k,\dots,\Phi(\bar{y})_1).$$
	\end{enumerate}
\end{proof}
Additionally, viable combinations of $\Phi(\Y)$ and  viable combinations of $Y_N$ are strongly related. More precisely, the preimage of a viable combination of $\Phi(Y_N)$ is a viable combination of $Y_N$.
To proof this, we require the following lemma.

\begin{lemma}\label{lemma: preimage inequality}
    Let $\Phi(\Y)$ be a viable combination of $\Phi(Y_N)$. Then, for all $i,j\in[k-1]$, it holds that $\Y^j_i\le \Y^i_i$.
\end{lemma}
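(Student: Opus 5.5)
We derive the statement from \Cref{cor:bascis}.\ref{item:basic 1} applied to the perturbed set, together with the fact that $\Phi$ moves each coordinate by strictly less than $\delta$ in a controlled, order-preserving way. By \Cref{prop:not general position}, $\Phi(Y_N)$ consists of pairwise nondominated images in general position. Hence all results of \Cref{sec:general position} apply to $\Phi(Y_N)$ in place of $Y_N$; the perturbed $\delta$ has to be chosen small enough for $\Phi(Y_N)$, but this plays no role here. Dummy images are extended by $\Phi(d^t)\coloneqq d^t$.

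Let $\Phi(\Y)$ be a viable combination of $\Phi(Y_N)$ and fix $i\neq j$ in $[k-1]$; the case $i=j$ is trivial. By \Cref{cor:bascis}.\ref{item:basic 1} for $\Phi(Y_N)$, we have $\Phi(\Y^j)_i<\Phi(\Y^i)_i$. Suppose, for contradiction, that $\Y^j_i>\Y^i_i$. We first treat the cases involving dummies.
\begin{itemize}
\item If $\Y^i=d^i$, then $\Y^i_i=\infty$, so the claim holds.
\item If $\Y^j$ is a dummy with index $j\ne i$, then $\Y^j_i=-\infty$, so the claim holds.
\item If $\Y^i=d^t$ with $t\neq i$, then $\Y^i_i=-\infty$. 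This would force $\Phi(\Y^j)_i<-\infty$, which is impossible. (One should double-check that a dummy of the wrong index cannot sit at position $i$; this follows from \Cref{cor:bascis}.\ref{item:basic 1} in the same way, or from the definition of $\clE{d^t}{i}$.)
\end{itemize}
So we may assume that both $\Y^i$ and $\Y^j$ are nondominated images of $Y_N$.

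The key quantitative step is the coordinate estimate already used in the proof of \Cref{prop:not general position}. For every $y\in Y_N$ and every coordinate $m$, we have $y_m\le \Phi(y)_m\le y_m+\frac{K\delta}{K+1}<y_m+\delta$. Since distinct values differ by more than $\delta$, the assumption $\Y^j_i>\Y^i_i$ gives $\Y^j_i>\Y^i_i+\delta$. Therefore
$$\Phi(\Y^j)_i\ge \Y^j_i>\Y^i_i+\delta>\Phi(\Y^i)_i,$$
which contradicts $\Phi(\Y^j)_i<\Phi(\Y^i)_i$. Hence $\Y^j_i\le \Y^i_i$.

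Equality $\Y^j_i=\Y^i_i$ is genuinely possible, since $\Phi$ only separates tied values and never reverses a strict order. This is exactly why the lemma states $\le$ rather than $<$.

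The main obstacle I expect is bookkeeping rather than substance. First, one has to make sure that \Cref{cor:bascis} is legitimately applicable to $\Phi(Y_N)$, including the choice of $\delta$ for the perturbed set. Second, one has to handle the dummy images cleanly, in particular to exclude a dummy $d^t$ with $t\ne i$ in position $i$. For the latter I would argue directly: if $\Y^i=d^t$ with $t\neq i$, then $\e(\Phi(\Y))_i=-\infty$, so no image is feasible near $\e(\Phi(\Y))$. Whenever $\Phi(\Y)$ contains a non-dummy $\Phi(\Y^m)$, this contradicts viability via \Cref{lem: star shaped}. If no non-dummy is present, the inequality holds trivially.
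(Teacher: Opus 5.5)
Your proof is the paper's argument: you apply \Cref{cor:bascis}.\ref{item:basic 1} to $\Phi(Y_N)$ to get $\Phi(\Y^j)_i<\Phi(\Y^i)_i$, and since $\Phi$ raises each coordinate by at most $\frac{K\delta}{K+1}<\delta$, a strict inequality $\Y^i_i<\Y^j_i$ would survive the perturbation and contradict it. The extra dummy bookkeeping, which the paper omits, is fine except that your enumeration misses the case $\Y^j=d^i$ with $\Y^i\in Y_N$; that case is excluded at once, because \Cref{cor:bascis}.\ref{item:basic 1} would force $\infty=\Phi(\Y^j)_i<\Phi(\Y^i)_i$.
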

\begin{proof}
    In case $i=j$, the statement obviously holds. Otherwise, since $\Phi(\Y)$ is a viable combination, it holds that $\Phi(\Y^j)_i<\Phi(\Y^i)_i$ (\Cref{cor:bascis}.\ref{item:basic 1}).
    Suppose it is $\Y^i_i<\Y^j_i$. Then, by the construction of $\Phi$, it holds that
    $$\Phi(\Y^i)_i \le \Y^i_i+\frac{K\delta}{K+1} < \Y^j_i\le\Phi(\Y^j)_i$$
    which is a contradiction.
\end{proof}
\begin{theorem}\label{lemma: viable combinations}
	Let $\Phi(\Y)$ be a viable combination of $\Phi(Y_N)$. Then, $\Y$ is a viable combination of $Y_N$ .
\end{theorem}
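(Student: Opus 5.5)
We need to show that $\e(\Y)\in\clE{\Y^i}{i}$ for every $i\in[k-1]$, where $\clE{\cdot}{i}$ is taken with respect to $Y_N$; by \Cref{lemma: preimage inequality} we already know $\Y^j_i\le\Y^i_i$ for all $i,j$. The plan is to transfer membership in closures of epsilon-components from $\Phi(Y_N)$ to $Y_N$ by approximating. Since $\Phi(\Y)$ is viable, $\e(\Phi(\Y))\in\clE{\Phi(\Y^i)}{i}$. Hence there are parameters $\e'$ arbitrarily close to $\e(\Phi(\Y))$ with $\Phi(\Y^i)$ optimal for $\emethod{\e'}$ over $\Phi(Y_N)$. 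By \Cref{lem: star shaped}, we may take $\e'=\e(\Phi(\Y))+\eta\mathbf{1}$ for small $\eta>0$, because $\Phi(\Y^i)_{-k}\leqq\e(\Phi(\Y))$ by \Cref{cor:bascis}.\ref{item:basic 1}.

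The key step is a comparison of feasible sets. Fix a small $\eta>0$ (smaller than $\delta/(K+1)$) and set $\e''\coloneqq\e(\Y)+\eta\mathbf{1}$. We aim to show two things:
\begin{enumerate}[label=(\alph*)]
\item $\Y^i$ is feasible for $\emethod{\e''}$ over $Y_N$, which follows from $\Y^i_{-k}\leqq\e(\Y)$ via \Cref{lemma: preimage inequality}.
\item Every $y\in Y_N$ feasible for $\emethod{\e''}$ satisfies $\Phi(y)$ feasible for $\emethod{\e'}$ for a suitably chosen $\e'$ near $\e(\Phi(\Y))$, up to the perturbation.
\end{enumerate}
For (b), a feasible $y$ has $y_j\le\Y^j_j$ for all $j$, since values differ by at least $\delta$ otherwise. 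If $y_j<\Y^j_j$, then $\Phi(y)_j<\Phi(\Y^j)_j$ because the shifts are below $\delta$. If $y_j=\Y^j_j$, the tie is broken by the index in the order \eqref{eq:order}, and this is where care is needed.

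Granting the comparison, the argument concludes as follows. Suppose $y\ne\Y^i$ is lexicographically better than $\Y^i$ and feasible for $\emethod{\e''}$. Then $\Phi(y)$ is lexicographically better than $\Phi(\Y^i)$ by \Cref{prop:not general position}.\ref{item: not general position 2}. If $\Phi(y)$ is also feasible for $\emethod{\e'}$, this contradicts the optimality of $\Phi(\Y^i)$. So $\Y^i$ is optimal for $\emethod{\e''}$, hence $\e''\in\E{\Y^i}$. Since $\e''_i\to\Y^i_i$ as $\eta\to0$, we obtain $\e(\Y)\in\clE{\Y^i}{i}$.

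The main obstacle is the tie case $y_j=\Y^j_j$ with $\Phi(y)_j>\Phi(\Y^j)_j$, where $\Phi(y)$ would be infeasible in the perturbed problem. To handle it, I would instead argue by contradiction in the style of \Cref{re:important stuff}. Suppose $\e(\Y)\notin\clE{\Y^i}{i}$. Then there is an optimal $y^*$ for $\emethod{\e(\Y)+\eta\mathbf{1}}$ that is lexicographically better than $\Y^i$ and satisfies $y^*_j\le\Y^j_j$ for all $j$. I would use the ordering in \eqref{eq:order}: a tie $y^*_j=\Y^j_j$ with $y^*$ later in the order means $y^*$ is lexicographically worse than $\Y^j$. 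I would then combine this with \Cref{cor:bascis}.\ref{item: basic 2} applied in $\Phi(Y_N)$, which places the images of the combination before the optimal image, to show that ties either cannot occur or go in the favorable direction. I expect this bookkeeping of the tie-breaking indices to be the delicate part, and I am not certain it closes as sketched.
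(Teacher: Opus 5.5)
There is a genuine gap, and it lies in the choice of test parameter, not only in the tie bookkeeping. The uniformly shifted point $\e''=\e(\Y)+\eta\mathbf{1}$ cannot lie in $\E{\Y^i}$ in general. By \Cref{lemma: preimage inequality}, every non-dummy $\Y^j$ satisfies $\Y^j_{-k}\leqq\e(\Y)$. So all of them are feasible for $\emethod{\e''}$, and only the lexicographically best one can be optimal there. Since $\e''$ does not depend on $i$, your argument would give $\e''\in\E{\Y^i}$ for all $i$ at once. That is impossible as soon as $\Y$ contains two distinct nondominated images. For instance, in \Cref{ex:introductory4D} ($\Phi$ is the identity) with $\Y=(y^1,y^2,d^3)$, the point $(4+\eta,4+\eta,\infty)$ lies in $\E{y^1}$, not in $\E{y^2}$.

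The same objection applies to $\e(\Phi(\Y))+\eta\mathbf{1}$ on the $\Phi$ side. \Cref{lem: star shaped} only lets you move from a point of $\E{y}$ toward $y_{-k}$. It says nothing about moving upward from a boundary point of $\cl{\E{y}}$, and moving upward makes more images feasible. Consequently the tie case you flag cannot be closed as you hope. The bad tie $y_j=\Y^j_j$ is realized by $y=\Y^j$ itself whenever $\Y^j$ precedes $\Y^i$ in \eqref{eq:order}, and then $\Phi(\Y^j)$ is infeasible at every parameter in $\E{\Phi(\Y^i)}$.

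The missing idea is an asymmetric, $i$-dependent perturbation, which is the route the paper takes. Set $\e^{(m)}_j=\Y^j_j$ if $\Y^j$ precedes $\Y^i$ in \eqref{eq:order}, and $\e^{(m)}_j=\Y^j_j+\delta/m$ otherwise (in particular for $j=i$). Leaving those coordinates unshifted keeps the lexicographically better $\Y^j$ infeasible. Two facts are then needed.
\begin{enumerate}[label=(\arabic*)]
\item $\Y^i$ stays feasible, i.e.\ $\Y^i_j<\Y^j_j$ strictly whenever $\Y^j$ precedes $\Y^i$. Equality would give $\Phi(\Y^j)_j<\Phi(\Y^i)_j$ by the tie-breaking shifts, contradicting \Cref{cor:bascis}.\ref{item:basic 1} for $\Phi(\Y)$.
\item If some $y^*$ lexicographically better than $\Y^i$ were feasible for $\emethod{\e^{(m)}}$, then $\Phi(y^*)_j<\Phi(\Y^j)_j$ for every $j$. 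In the unshifted coordinates this holds because $y^*_j<\Y^j_j$ and the shifts are below $\delta$. In the shifted coordinates you get $y^*_j\le\Y^j_j$, and a tie now breaks in your favor, since $y^*$ precedes $\Y^i$, which precedes or equals $\Y^j$.
\end{enumerate}
Then compare not with a fixed shifted point but with an arbitrary sequence in $\E{\Phi(\Y^i)}$ converging to $\e(\Phi(\Y))$. By these strict inequalities, $\Phi(y^*)$ is feasible for its late members. Yet it is lexicographically better than $\Phi(\Y^i)$ by \Cref{prop:not general position}.\ref{item: not general position 2}, a contradiction. Hence $\e^{(m)}\in\E{\Y^i}$ and $\e^{(m)}\to\e(\Y)$.

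Your instinct to break ties using the order \eqref{eq:order} is the right one. It only works, however, once the coordinates belonging to lexicographically better $\Y^j$ are left unshifted.
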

\begin{proof}
	We show that $\e(\Y)\in \clE{\Y^1}{1}$. Showing the same for $\Y^2,\dots,\Y^{k-1}$ works analogously.  For each $m\in\mathbb{N}$ and for all $i\in[k-1]$, we define an $\e^{(m)}$  by 
	$$\e^{(m)}_i\coloneqq\begin{cases}
		\Y^i_i, & \text{if } (\Y^i_k,\dots, \Y^i_
		1)\llex (\Y^1_k,\dots,\Y^1_1) \\
		\Y^i_i+\frac{\delta}{m}, & \text{else.}  
	\end{cases}$$
	Then, it holds that $\lim_{m\to\infty} \e^{(m)}	=\e(\Y)$. 
    We show that $\e^{(m)}\in\E{\Y^1}$ for all $m\in\mathbb{N}$.
	To this end, we  first prove that $\Y^1$ is feasible for $\emethod{\e^{(m)}}$, i.\,e., $\Y^1_i<\e_i^{(m)}$ for all $i\in[k-1]$. By \Cref{lemma: preimage inequality}, it holds that $\Y^1_i \le \Y_i^i$. Thus, we need to show that $\Y^1_i<\Y^i_i=\e^{(m)}_i$ for all $i\in[k-1]$  with $(\Y^i_k,\dots, \Y^i_1)\llex (\Y^1_k,\dots,\Y^1_1)$.
 Suppose this is not the case, i.\,e., $\Y^1_i=\Y^i_i$. Then, for some $s,r$ with $s<r$ it holds that 
 $$\Phi(\Y^i)_i\le \Y^i_i+ \frac{s\delta}{K +1} < \Y^1_i +\frac{r\delta}{K+1}= \Phi(\Y^1)_i.$$ But this contradicts \Cref{cor:bascis}.\ref{item:basic 1}. Thus, $\Y^1_i<\Y^i_i$ for all $i\in[k-1]$  with $(\Y^i_k,\dots, \Y^i_1)\llex (\Y^1_k,\dots,\Y^1_1)$ and $\Y^1$ is feasible for $\emethod{\e^{(m)}}$.
 
 It remains to be shown that $\Y^1$ is also optimal for $\emethod{\e^{(m)}}$ for all $m\in\mathbb{N}$. 
Suppose this is not the case.
 Then, there exists a $y^*$ with  $(y^*_k,\dots,y^*_1)\llex (\Y^1_k,\dots,\Y^1_1)$ that is feasible for $\emethod{\e^{(m)}}$ for some $m\in\mathbb{N}$. Thus, by the definition of $\delta$, it already holds that $y^*$ is feasible for $\emethod{\e^{(m)}}$ for all $m\in\mathbb{N}.$, i.\,e., $\Y^1$ is not optimal for any.
 We show that this contradicts that $\e(\Phi(\Y))\in\clE{\Phi(\Y^1)}{1}$, i.\,e., it contradicts that $\Phi(\Y)$ is a viable combination of $\Phi(\Y_N)$. To this end, let $\bar{\e}^{(m)}$ be any sequence in $\E{\Phi(\Y^1)}$ with limit $\e(\Phi(\Y))$. We show that for all $i\in[k-1]$ and for sufficiently large $m'$, it holds that $\Phi(y^*)_i<\bar{\e}_i^{(m')}$, i.\,e., $\Phi(y^*)$ is feasible for $\emethod{\e^{(m')}}$. This then contradicts that $\bar{\e}^{(m)}$ is a sequence in $\E{\Phi(\Y^1)}$ since $(\Phi(y^*)_k,\dots,\Phi(y^*)_1)\llex(\Phi(\Y^1)_k,\dots,\Phi(\Y^1)_1)$ by \Cref{prop:not general position}.\ref{item: not general position 2}. To this end, we consider two distinct cases:
 \begin{enumerate}
     \item First, we consider all  $i\in[k-1]$ with $$(\Y^i_k,\dots, \Y^i_
		1)\llex (\Y^1_k,\dots,\Y^1_1).$$ 
        For these, we have that $y^*_i<\e^{(m)}_i=\Y^i_i$ and, thus, $\Phi(y^*)_i+\frac{\delta}{K+1}<\Phi(\Y^i)_i$.
        Since $\lim_{m\to\infty} \bar{\e}^{(m)}=\e(\Phi(\Y))$, it is $\lim_{m\to\infty} \e^{(m)}_i=\Phi(\Y^i)_i$. Consequently, for sufficiently large $m'$, it holds that $\Phi(y^*)_i<\bar{\e}_i^{(m')}.$
        
  \item Next, we consider  all $i\in[k-1]$ with $$(\Y^1_k,\dots, \Y^1_
		1)\llex (\Y^i_k,\dots,\Y^i_1).$$
        For these, it holds that $y^*_i< \e^{(m)}=\Y^i_i+\frac{\delta}{m}$ and, therefore, $y^*_i<\Y^i_i$ and 
        $$(y^*_k,\dots,y^*_1)\llex (\Y^1_k,\dots, \Y^1_
		1)\llex (\Y^i_k,\dots,\Y^i_1).$$
        Thus, it holds that $\Phi{(y^*)}_i < \Phi{(\Y^i)}_i$ and, consequently, for sufficiently large $m'$, it is $\Phi{(y^*)}_i<\bar{\e}^{(m')}_i$.
   \end{enumerate}

    Therefore, we get that, for sufficiently large $m'$, $\Phi(y^*)$ is feasible for $\emethod{\e^{(m')}}$. This contradicts that $\bar{\e}^{(m)}$ is a sequence in $\E{\Phi(\Y^1)}$.

\end{proof}
The reverse of \Cref{lemma: viable combinations} does not hold as the following example shows.
\begin{example}\label{ex:true combinations}
	Consider  a tri-objective integer optimization problem with $$
	Y=Y_N=\left\{y^1=(4,3,2)^\top,y^2=(4,2,3)^\top,y^3=(2,3,4)^\top \right\}.
	$$
	Then, for $\delta=1$, we have
	$$
	\Phi(Y_N)=\left\{\Phi(y^1)=(4,3,2)^\top,\Phi(y^2)=(4.5,2,3)^\top,\Phi(y^3)=(2,3.75,4)^\top \right\}
	$$ 
	It holds that $\Y=(y^2,y^3)$ is a viable combination of $Y_N$. But, $\clE{\Phi(y^2)}{1}\cap \clE{\Phi(y^3)}{2}=\emptyset$. Thus, $(\Phi(y^2),\Phi(y^3))$ is not a viable combination of $\Phi(Y_N$). A visualization is given in \Cref{fig:true combs}.
\end{example}

\begin{figure}[!ht]
	\centering
	\subfloat[]{
    \begin{tikzpicture}[scale = 1]
			
			\begin{axis}[
				axis lines=left,
				width=200pt,
				xmin=0,xmax=7,
				ymin=0,ymax=6,
				xtick ={5},
				ytick={5},
				xlabel={$f_1$},
				ylabel={$f_2$},	every axis x label/.style={
					at={(axis cs: 7,0,0)},
					anchor=north},
				every axis y label/.style={
					at={(axis cs: 0,6,0)},
					anchor=south
				}]
				\filldraw [fill=dummy,draw=black] (axis cs: 1,1) rectangle (axis cs:19,10);
				\filldraw [fill=i3,draw=black] (axis cs: 2,3) rectangle (axis cs: 19,10);
				\filldraw [fill=i2,draw=black] (axis cs: 4,2) rectangle (axis cs: 19,10);
				\filldraw [fill=i1,draw=black] (axis cs: 4,3) rectangle (axis cs: 19,10);
				
				\filldraw [fill=dummy,draw=black] (axis cs: 1,5) rectangle (axis cs: 19,10);
				\filldraw [fill=dummy,draw=black] (axis cs: 6,1) rectangle (axis cs: 19,10); 
				\addplot[mark=x,mark options={scale=3.2}, line width=2pt] coordinates {(6,5)} ;
				\addplot[mark=x,white,mark options={scale=3},thick] coordinates {(6,5)} ;
				\addplot[mark=x,mark options={scale=3.2}, line width=2pt] coordinates {(6,3)} ;
				\addplot[mark=x,white,mark options={scale=3},thick] coordinates {(6,3)} ;
				\addplot[mark=x,mark options={scale=3.2}, line width=2pt] coordinates {(6,2)} ;
				\addplot[mark=x,white,mark options={scale=3},thick] coordinates {(6,2)} ;
				\addplot[mark=x,mark options={scale=3.2}, line width=2pt] coordinates {(4,5)} ;
				\addplot[mark=x,white,mark options={scale=3},thick] coordinates {(4,5)} ;
				\addplot[mark=x,mark options={scale=3.2}, line width=2pt] coordinates {(2,5)} ;
				\addplot[mark=x,white,mark options={scale=3},thick] coordinates {(2,5)} ;
				\addplot[mark=x,mark options={scale=3.2}, line width=2pt] coordinates {(4,3)} ;
				\addplot[mark=x,green,mark options={scale=3},thick] coordinates {(4,3)} ;
	
			\end{axis}
			
		\end{tikzpicture}	
 
    }
	\subfloat[]{
    \begin{tikzpicture}[scale = 1]
			\begin{axis}[
				axis lines=left,
				width=200pt,
				xmin=0,xmax=7,
				ymin=0,ymax=6,
				xtick ={5},
				ytick={5},
				xlabel={$f_1$},
				ylabel={$f_2$},	every axis x label/.style={
					at={(axis cs: 7,0,0)},
					anchor=north},
				every axis y label/.style={
					at={(axis cs: 0,6,0)},
					anchor=south
				}]
				\filldraw [fill=dummy,draw=black] (axis cs: 1,1) rectangle (axis cs:19,10);
				\filldraw [fill=i3,draw=black] (axis cs: 2,3.75) rectangle (axis cs: 19,10);
				\filldraw [fill=i2,draw=black] (axis cs: 4.5,2) rectangle (axis cs: 19,10);
				\filldraw [fill=i1,draw=black] (axis cs: 4,3) rectangle (axis cs: 19,10);
				
				\filldraw [fill=dummy,draw=black] (axis cs: 1,5) rectangle (axis cs: 19,10);
				\filldraw [fill=dummy,draw=black] (axis cs: 6,1) rectangle (axis cs: 19,10);
				\addplot[mark=x,mark options={scale=3.2}, line width=2pt] coordinates {(6,5)} ;
				\addplot[mark=x,white,mark options={scale=3},thick] coordinates {(6,5)} ;
				\addplot[mark=x,mark options={scale=3.2}, line width=2pt] coordinates {(6,3)} ;
				\addplot[mark=x,white,mark options={scale=3},thick] coordinates {(6,3)} ;
				\addplot[mark=x,mark options={scale=3.2}, line width=2pt] coordinates {(6,2)} ;
				\addplot[mark=x,white,mark options={scale=3},thick] coordinates {(6,2)} ;
				\addplot[mark=x,mark options={scale=3.2}, line width=2pt] coordinates {(4,5)} ;
				\addplot[mark=x,white,mark options={scale=3},thick] coordinates {(4,5)} ;
				\addplot[mark=x,mark options={scale=3.2}, line width=2pt] coordinates {(2,5)} ;
				\addplot[mark=x,white,mark options={scale=3},thick] coordinates {(2,5)} ;
				\addplot[mark=x,mark options={scale=3.2}, line width=2pt] coordinates {(4,3.75)} ;
				\addplot[mark=x,green,mark options={scale=3},thick] coordinates {(4,3.75)} ;
				\addplot[mark=x,mark options={scale=3.2}, line width=2pt] coordinates {(4.5,3)} ;
				\addplot[mark=x,green,mark options={scale=3},thick] coordinates {(4.5,3)} ;  

			\end{axis}
			\node[rectangle,fill=i1,draw=black] (y1) at (6
			,4){};
			\node[right = of y1,xshift=-1cm,yshift=0.2cm] {$E(y^1)$};
			\node[right = of y1,xshift=-1cm,,yshift=-0.2cm] {$E(\Phi(y^1))$};
			\node[rectangle,fill=i2,draw=black, below= of y1,yshift=0] (y2) {};
			\node[right = of y2,xshift=-1cm,yshift=0.2cm] {$E(y^2)$};
			\node[right = of y2,xshift=-1cm, yshift=-0.2cm] {$E(\Phi(y^2))$};
			
			\node[rectangle,fill=i3,draw=black, below= of y2,yshift=0] (y3) {};
			\node[right = of y3,xshift=-1cm,yshift=0.2cm] {$E(y^3)$};
				\node[right = of y3,xshift=-1cm, yshift=-0.2cm] {$E(\Phi(y^3))$};

		\end{tikzpicture}
 }
	\caption{The left images depicts the epsilon-components of  the nondominated set $Y_N$ of \Cref{ex:true combinations}. All viable parameters are marked. The right image depicts the same but for the  set $\Phi(Y_N)$. The parameter that is marked in green in the left image is defined by three different viable combinations, $(y^1,y^3),(y^2,y^1)$ and $(y^2,y^3)$. Only two of these are also viable combinations of $\Phi(Y_N)$, $(\Phi(y^1),\Phi(y^3))$ and $(\Phi(y^2), \Phi(y^1))$. $(\Phi(y^1),\Phi(y^3))$ and $(\Phi(y^2), \Phi(y^1))$ define different viable parameters, both are marked in green in the right image.}
	\label{fig:true combs}
\end{figure}
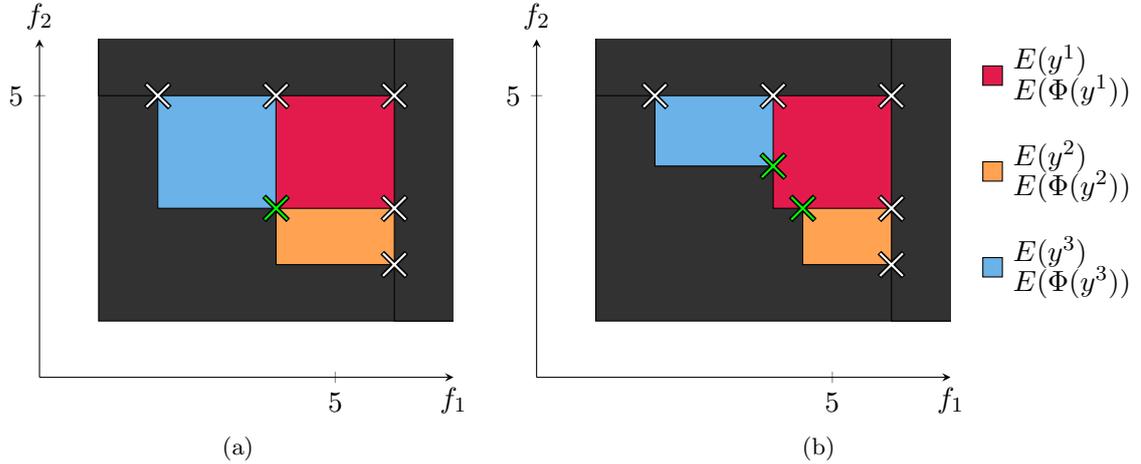
Since $\Phi(Y_N)$ is in general position, it is sufficient to solve a scalarization problem for each viable combination of $\Phi(Y_N)$ to compute it. Thus, as each nondominated imagine in $Y_N$ has a corresponding nondominated image in $\Phi(Y_N)$, it stands to reason that it is also  sufficient to solve a scalarization problem for all viable combinations of $Y_N$ that map to viable combinations of $\Phi(Y_N)$. We call these \emph{true combinations}.
\begin{definition}\label{def:true combination}
	Let $\Y$ be a viable combination of $Y_N$. Then, $\Y$ is a \emph{true combination} of $Y_N$ if $\Phi(\Y)$ is a viable combination of $\Phi(Y_N)$. We denote the set of all true combinations of $Y_N$ by $\T(Y_N)$.
\end{definition}
True combinations of $Y_N$ and the corresponding viable combinations of $\Phi(Y_N)$ ``share optimal images''.
\begin{lemma}\label{lemma: true combinations opt}
	Let $\Y$ be a true combination of $Y_N$. A nondominated image $y$ is optimal for $\emethod{\e(\Y)}$ if and only if $\Phi(y)$ is optimal for $\emethod{\e(\Phi(\Y))}$.
\end{lemma}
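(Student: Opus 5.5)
The approach is to compare the feasible sets of $\emethod{\e(\Y)}$ and $\emethod{\e(\Phi(\Y))}$ and transfer optimality using \Cref{prop:not general position}. Because lexicographic optima are unique, it suffices to show one thing: if $\Phi(y^*)$ is the optimal image of $\emethod{\e(\Phi(\Y))}$, then $y^*$ is optimal for $\emethod{\e(\Y)}$. This gives both directions. Suppose $y$ is optimal for $\emethod{\e(\Y)}$ and $\Phi(y^*)$ is the optimum of the perturbed problem. Then $y^*$ is also optimal for $\emethod{\e(\Y)}$, so $y=y^*$ by uniqueness; the converse is the claim itself. The perturbed problem has an optimum because $\Phi(\Y)$ is viable. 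I expect the original problem is always feasible at a viable parameter, but this is not needed.

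\emph{Step 1 (original feasibility implies perturbed feasibility).} Let $\bar y\in Y_N$ with $\bar y_i<\Y^i_i$ for all $i\in[k-1]$. Since $\bar y_i<\Y^i_i$ forces $\bar y_i+\delta<\Y^i_i$ by the choice of $\delta$, the construction of $\Phi$ gives
$$\Phi(\bar y)_i\le \bar y_i+\tfrac{K\delta}{K+1}<\Y^i_i\le\Phi(\Y^i)_i.$$
Hence $\Phi(\bar y)$ is feasible for $\emethod{\e(\Phi(\Y))}$. Dummy images need a routine separate treatment, since their relevant coordinates are $\pm\infty$.

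\emph{Step 2 (the perturbed optimum is feasible for the original problem).} Let $\Phi(y^*)$ be optimal for $\emethod{\e(\Phi(\Y))}$. As in \Cref{lemma: preimage inequality}, $\Phi(y^*)_i<\Phi(\Y^i)_i$ implies $y^*_i\le\Y^i_i$, so only the case of equality must be excluded. This is the main obstacle, since here the perturbed feasible set can be genuinely larger than the original one. I would use that $\Phi(Y_N)$ is in general position and $\Phi(\Y)$ is viable. Then \Cref{cor:bascis}.\ref{item: basic 2} gives $\Phi(y^*)_k>\Phi(\Y^i)_k$ for every $i$. By \Cref{prop:not general position}.\ref{item: not general position 2}, applied contrapositively and using that both lexicographic orders are total on distinct images, $y^*$ comes after $\Y^i$ in the ordering \eqref{eq:order}. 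If $y^*_i=\Y^i_i$, the construction of $\Phi$ adds a strictly larger shift to the later image, namely $\tfrac{s\delta}{K+1}$ with a larger index $s$. Thus $\Phi(y^*)_i>\Phi(\Y^i)_i$, contradicting feasibility. Hence $y^*_i<\Y^i_i$ for all $i$, and $y^*$ is feasible for $\emethod{\e(\Y)}$.

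\emph{Step 3 (optimality transfers).} Let $\bar y\ne y^*$ be any image feasible for $\emethod{\e(\Y)}$. By Step 1, $\Phi(\bar y)$ is feasible for the perturbed problem, so $(\Phi(y^*)_k,\dots,\Phi(y^*)_1)\llex(\Phi(\bar y)_k,\dots,\Phi(\bar y)_1)$. If instead $\bar y$ were lexicographically better than $y^*$, \Cref{prop:not general position}.\ref{item: not general position 2} would reverse this inequality. Therefore $y^*$ is lexicographically better than every other feasible image, so it is optimal for $\emethod{\e(\Y)}$, which finishes the argument.
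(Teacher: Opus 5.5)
Apart from one false justification, your argument is correct. The claim ``the perturbed problem has an optimum because $\Phi(\Y)$ is viable'' does not hold. Neither does your side remark that the original scalarization is feasible at every viable parameter, because viable parameters may lie in the infeasible region $\E{d^k}$. For instance, in \Cref{ex:introductory_vc} the set $Y_N$ is in general position, so $\Phi$ is the identity and every viable combination is true. Yet $(d^1,d^2,y^1)$ is viable and no image of $Y_N$ satisfies $y_3<y^1_3=2$; in fact, seven of the eleven viable parameters there are infeasible.

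The repair uses only your own Step~1. In the direction where $y$ is assumed optimal for $\emethod{\e(\Y)}$, $y$ is in particular feasible, so $\Phi(y)$ is feasible for $\emethod{\e(\Phi(\Y))}$. Since $\Phi(Y_N)$ is finite, this problem then has an optimal image $\Phi(y^*)$, and your Steps~2 and~3 together with uniqueness give $y=y^*$. With that change the equivalence is established in all cases, including the one where both problems are infeasible and both sides are false for every $y$.

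Otherwise your proof is a faithful, fully explicit version of what the paper compresses into ``follows directly from the construction of $\Phi(Y_N)$''. That one-liner undersells your Step~2: the construction of $\Phi$ alone does not suffice, and the hypothesis that $\Y$ is a \emph{true} combination is genuinely needed exactly where you use it. Take the viable but not true combination $\Y=(y^2,y^3)$ of \Cref{ex:true combinations}. There $\emethod{\e(\Y)}$ with $\e(\Y)=(4,3)$ is infeasible, whereas $\Phi(y^1)$ is optimal for $\emethod{\e(\Phi(\Y))}$ with $\e(\Phi(\Y))=(4.5,3.75)$. This happens because $y^1_1=y^2_1$ and $y^1$ precedes $y^2$ in \eqref{eq:order}. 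Your appeal to \Cref{cor:bascis}.\ref{item: basic 2} in $\Phi(Y_N)$ is precisely what excludes this situation: it forces the perturbed optimum to come after every $\Y^i$ in \eqref{eq:order}, and hence to receive the larger shift on any tied coordinate.
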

\begin{proof}
	   Follows directly from the construction of $\Phi(Y_N)$. 
\end{proof}
Since there is a one-to-one correspondence between $\T(Y_N)$ and $\V{\Phi(Y_N)}$ and optimal images coincide, we can define an order on $\T(Y_N)$ analogous to \Cref{def:scions} and \Cref{thm:scions}. Note that there are two significant differences: We define the order for true combinations and not viable combinations. In addition, we have strict inequalities in \Cref{def:scions} while here we only have less or equals.
\begin{theorem}\label{thm: scions not in general position}
	Let $\mathcal{Y}$ be a true combination and let $y^*$ be the optimal image of $\emethod{\e(\mathcal{Y})}$. Then, for all $\ell\in[k-1]$ such that $y^*_\ell\ge \Y^i_\ell$  for all $i\in[k-1]$ with $i\ne \ell$, it holds that $(\Y^1,\dots,\Y^{\ell-1},y^*,\Y^{\ell+1},\dots,\Y^{k-1})$ is a true combination.\\
 We call $(\Y^1,\dots,\Y^{\ell-1},y^*,\Y^{\ell+1},\dots,\Y^{k-1})$ the \emph{$\ell$-th} scion of $\Y$ and denote it by $\scion{\Y}{\ell}$.
\end{theorem}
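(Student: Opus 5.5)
The plan is to transfer the problem to the perturbed set $\Phi(Y_N)$, which is in general position, and invoke \Cref{thm:scions} there. Since $\Y$ is a true combination, $\Phi(\Y)$ is a viable combination of $\Phi(Y_N)$. By \Cref{lemma: true combinations opt}, the optimal image of $\emethod{\e(\Phi(\Y))}$ is $\Phi(y^*)$. If I can show that $\Phi(y^*)_\ell>\Phi(\Y^i)_\ell$ for all $i\ne\ell$, then $\Phi(\Y)$ has an $\ell$-th scion in the sense of \Cref{def:scions}. That scion is
$$(\Phi(\Y^1),\dots,\Phi(\Y^{\ell-1}),\Phi(y^*),\Phi(\Y^{\ell+1}),\dots,\Phi(\Y^{k-1})),$$
and by \Cref{thm:scions} it is a viable combination of $\Phi(Y_N)$. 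Its preimage is exactly $(\Y^1,\dots,y^*,\dots,\Y^{k-1})$. \Cref{lemma: viable combinations} then shows this preimage is a viable combination of $Y_N$, and hence a true combination by \Cref{def:true combination}. Dummy images are handled trivially throughout, since $\Phi$ leaves them unchanged and their entries are $\pm\infty$.

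The key step is therefore to upgrade the weak inequality $y^*_\ell\ge\Y^i_\ell$ to the strict inequality $\Phi(y^*)_\ell>\Phi(\Y^i)_\ell$. I split into two cases.

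\emph{Case $y^*_\ell>\Y^i_\ell$.} Every perturbation is smaller than $\delta$, and $\delta$ is below the minimal gap between distinct values. Hence $\Phi(\Y^i)_\ell\le \Y^i_\ell+\frac{K\delta}{K+1}<y^*_\ell\le\Phi(y^*)_\ell$, exactly as in the proof of \Cref{lemma: preimage inequality}.

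\emph{Case $y^*_\ell=\Y^i_\ell$.} Here I use the ordering \eqref{eq:order} together with the fact that $\Phi$ adds a perturbation that increases with the lexicographic index among images sharing a value. By \Cref{cor:bascis}.\ref{item: basic 2} applied in $\Phi(Y_N)$, we have $\Phi(y^*)_k>\Phi(\Y^i)_k$. By \Cref{prop:not general position}.\ref{item: not general position 2}, which preserves the order in both directions because it preserves a total order, $\Y^i$ therefore precedes $y^*$ lexicographically. Say $\Y^i=y^s$ and $y^*=y^r$ with $s<r$. Since $y^r_\ell=y^s_\ell$ with $s<r$, the value $\Phi(y^r)_\ell$ receives the shift $\frac{r\delta}{K+1}$. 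Meanwhile $\Phi(y^s)_\ell$ receives either no shift or the shift $\frac{s\delta}{K+1}$, and this shift is strictly smaller. This gives the strict inequality.

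I expect the main obstacle to be the equal case. The difficulty is justifying carefully that \Cref{cor:bascis}.\ref{item: basic 2} in the perturbed problem really forces $\Y^i$ to come before $y^*$ in the order \eqref{eq:order}; this requires that $\Y^i\ne y^*$, which holds because $y^*$ is infeasible where $\Y^i$ is tight. One must also confirm that $\Phi$ is injective on the relevant coordinate comparisons, so that ties in $Y_N$ are broken consistently with the lexicographic order.
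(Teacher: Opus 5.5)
Your proposal is correct and follows the paper's proof: you pass to $\Phi(Y_N)$ via \Cref{lemma: true combinations opt}, upgrade $y^*_\ell\ge\Y^i_\ell$ to $\Phi(y^*)_\ell>\Phi(\Y^i)_\ell$, apply \Cref{thm:scions}, and pull back with \Cref{lemma: viable combinations}. Your two-case argument, which in the tied case uses \Cref{cor:bascis}.\ref{item: basic 2} in $\Phi(Y_N)$ to get $\Y^i\llex y^*$ and hence a strictly larger shift for $y^*$, is a correct expansion of the step the paper justifies only as holding ``by construction''.
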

\begin{proof}
	By \Cref{lemma: true combinations opt} we have that $\Phi(y^*)$ is  optimal for the viable combination $\Phi(\Y)$. Furthermore, by construction, $y^*_\ell\ge \Y^i_\ell$  for all $i\in[k-1]$ with $i\ne \ell$ if and only if $\Phi(y^*)_\ell > \Phi(\Y^i)_\ell$ for all $i=[k-1]$ with $i\ne \ell$. Thus, we can apply \Cref{thm:scions} and get that $(\Phi(\Y^1),\dots,\Phi(\Y^{\ell-1}),\Phi(y^*),\Phi(\Y^{\ell+1}),\dots,\Phi(\Y^{k-1}))$ is a viable combination. Therefore, by \Cref{lemma: viable combinations}, it holds that  $(\Y^1,\dots,\Y^{\ell-1},y^*,\Y^{\ell+1},\dots,\Y^{k-1})$ is a true combination.
\end{proof}
\begin{lemma}\label{lem:scion relation}
	Let $\Y,\Z\in\T(Y_N)$. Then, $\Z$ is the $\ell$-th scion of $\Y$ if and only if $\Phi(\Z)$ is the $\ell$-th scion of $\Phi(\Y)$.
\end{lemma}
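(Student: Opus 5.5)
The plan is to unfold both notions of scion and show that each of their defining conditions matches the other's. By \Cref{thm: scions not in general position}, $\Z$ is the $\ell$-th scion of the true combination $\Y$ exactly when three things hold:
\begin{enumerate}[label=(\alph*)]
\item $\Z^i=\Y^i$ for all $i\in[k-1]\backslash\{\ell\}$;
\item $\Z^\ell=y^*$, where $y^*$ is the optimal image of $\emethod{\e(\Y)}$;
\item $y^*_\ell\ge \Y^i_\ell$ for all $i\ne\ell$.
\end{enumerate}
By \Cref{def:scions}, applied to the viable combination $\Phi(\Y)$ of the general-position set $\Phi(Y_N)$, $\Phi(\Z)$ is the $\ell$-th scion of $\Phi(\Y)$ exactly when three analogous things hold:
\begin{enumerate}[label=(\alph*')]
\item $\Phi(\Z^i)=\Phi(\Y^i)$ for all $i\ne\ell$;
\item $\Phi(\Z^\ell)$ is the optimal image of $\emethod{\e(\Phi(\Y))}$;
\item $\Phi(\Z^\ell)_\ell>\Phi(\Y^i)_\ell$ for all $i\ne\ell$.
\end{enumerate}
I would show (a)$\Leftrightarrow$(a'), (b)$\Leftrightarrow$(b') and, given these, (c)$\Leftrightarrow$(c').

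For (a)$\Leftrightarrow$(a'), I would use that $\Phi$ is injective on $Y_N$. This follows from \Cref{prop:not general position}: distinct images have distinct lexicographic values, and that strict order is preserved. On the dummy images, $\Phi$ is understood as the identity, since they have infinite entries and are not perturbed, so injectivity extends to $Y_N\cup\{d^1,\dots,d^{k-1}\}$. For (b)$\Leftrightarrow$(b'), I would invoke \Cref{lemma: true combinations opt} directly, since $\Y$ is a true combination.

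The substantive step is (c)$\Leftrightarrow$(c'), i.e. for $i\ne\ell$,
$$y^*_\ell\ge\Y^i_\ell \iff \Phi(y^*)_\ell>\Phi(\Y^i)_\ell.$$
The proof of \Cref{thm: scions not in general position} claims this holds "by construction"; I would verify it by cases.
\begin{itemize}
\item If $y^*_\ell>\Y^i_\ell$, then the perturbation bound $\frac{K\delta}{K+1}<\delta$ gives $\Phi(y^*)_\ell>\Phi(\Y^i)_\ell$, exactly as in \Cref{lemma: preimage inequality}.
\item If $y^*_\ell<\Y^i_\ell$, the same bound gives the reverse strict inequality.
\item If $y^*_\ell=\Y^i_\ell$, the perturbation on coordinate $\ell$ increases with the position in the order \eqref{eq:order}. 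So I need $\Y^i$ to precede $y^*$ lexicographically, which holds because $y^*_k>\Y^i_k$, or $y^*_k\ge\Y^i_k$ with lexicographic precedence.
\end{itemize}
For the last case I would transfer \Cref{cor:bascis}.\ref{item: basic 2} from $\Phi(Y_N)$: there $\Phi(y^*)_k>\Phi(\Y^i)_k$, and \Cref{prop:not general position}.\ref{item: not general position 2} (with the injectivity and the converse implied by totality of the order) turns this into $(\Y^i_k,\dots,\Y^i_1)\llex(y^*_k,\dots,y^*_1)$. Then $\Y^i$ has the smaller index in \eqref{eq:order}. Its $\ell$-th coordinate therefore receives the smaller perturbation, or none if it is the first image with that value, which yields $\Phi(\Y^i)_\ell<\Phi(y^*)_\ell$. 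The dummy case $\Y^i=d^i$ is trivial because $d^i_\ell=-\infty$.

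The main obstacle is making this equal-coordinate case airtight, in particular the correct handling of dummies and the converse direction of \Cref{prop:not general position}.\ref{item: not general position 2}. The rest is bookkeeping: combining (a)–(c) with (a')–(c') gives both directions of the lemma.
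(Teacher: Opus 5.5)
Your proposal is correct and takes essentially the paper's route. The paper just points to the proof of \Cref{thm: scions not in general position}, which matches optimal images via \Cref{lemma: true combinations opt} and asserts $y^*_\ell\ge\Y^i_\ell \iff \Phi(y^*)_\ell>\Phi(\Y^i)_\ell$ ``by construction''; you make explicit what it leaves implicit, namely the injectivity of $\Phi$ (including on dummies) and the equal-coordinate case, where $\Y^i$ precedes $y^*$ in the order \eqref{eq:order} and therefore receives the smaller perturbation.
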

\begin{proof}
	Follows directly from the proof of \Cref{thm: scions not in general position}.
\end{proof}
\Cref{lem:scion relation} allows us to transfer the properties of viable combinations and scions as stated in \Cref{sec:general position} to true combinations. Therefore, the scion order has the desired properties even if the the nondominated set is not in general position.
\begin{theorem}\label{thm:results not in general position}
	\phantom{linebreak}
	\begin{enumerate}[label=(\roman*)] 
		\item\label{item:results1}For each $\Z\in\T(Y_N)$ with $\Z\ne(d^1,\dots,d^{k-1})$, it holds that $$\left|\left\{\Y\in\T(Y_N):\exists \ell\in[k-1] \text{ with }  \Z=\scion{\Y}{\ell}\right\}\right|= 1.$$
		\item\label{item:results2} For $y\in Y_N$ there exists exactly one true combination $\Y(y)\in\T(Y_N)$ such that $y$ is optimal for $\emethod{\e(\Y(y))}$ and for all $i\in[k-1]$, it holds that $\Y(y)^i_{-[i]}\le y_{-[i]}$.
		\item\label{item:results3} It holds that $|\T(Y_N)|\in \mathcal{O}(|Y_N|^{\lfloor\frac{k}{2}\rfloor})$.
	\end{enumerate}
\end{theorem}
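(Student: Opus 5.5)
The plan is to transfer each item from $\Phi(Y_N)$, which is in general position, to $Y_N$. The bijection is $\Y\mapsto\Phi(\Y)$ between $\T(Y_N)$ and $\V{\Phi(Y_N)}$. It is well defined because $\Phi$ is injective on $Y_N\cup\{d^1,\dots,d^{k-1}\}$ (dummies are fixed by $\Phi$). It is surjective because every viable combination of $\Phi(Y_N)$ is the image of some tuple $\Y$, which is viable by \Cref{lemma: viable combinations} and hence true by \Cref{def:true combination}. The three tools are \Cref{lemma: true combinations opt}, which matches optimal images, and \Cref{lem:scion relation}, which matches scion relations, together with the general-position results of \Cref{sec:general position}.

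For \ref{item:results1}, let $\Z\in\T(Y_N)$ with $\Z\neq(d^1,\dots,d^{k-1})$. Then $\Phi(\Z)\neq(d^1,\dots,d^{k-1})$ is a viable combination of $\Phi(Y_N)$. By \Cref{thm:at least one precursor} it has exactly one precursor $\Phi(\Y)$ with $\Phi(\Z)=\scion{\Phi(\Y)}{\ell}$ for some $\ell$, and $\Y\in\T(Y_N)$. By \Cref{lem:scion relation}, $\Z=\scion{\Y}{\ell}$, so at least one precursor exists in $\T(Y_N)$. Conversely, any precursor $\Y'\in\T(Y_N)$ of $\Z$ gives a precursor $\Phi(\Y')$ of $\Phi(\Z)$, again by \Cref{lem:scion relation}. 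Uniqueness there and injectivity of $\Phi$ then yield $\Y'=\Y$.

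For \ref{item:results3}, the injection $\T(Y_N)\to\V{\Phi(Y_N)}$ gives $|\T(Y_N)|\le|\V{\Phi(Y_N)}|$. By the injection $g$ following \Cref{thm:map to upper bounds}, this is at most $|U(\Phi(Y_N))|\in\mathcal{O}(|\Phi(Y_N)|^{\lfloor k/2\rfloor})$, and $|\Phi(Y_N)|=|Y_N|$.

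For \ref{item:results2}, I apply \Cref{cor:unique vc} to $\Phi(y)\in\Phi(Y_N)$. This gives a unique $\Phi(\Y)\in\V{\Phi(Y_N)}$ such that $\Phi(y)$ is optimal for $\emethod{\e(\Phi(\Y))}$ and $\Phi(\Y^i)_{-[i]}<\Phi(y)_{-[i]}$ for all $i$. By \Cref{lemma: true combinations opt}, $y$ is optimal for $\emethod{\e(\Y)}$. The remaining task, and the main obstacle, is to show that the strict condition $\Phi(\Y^i)_{-[i]}<\Phi(y)_{-[i]}$ is equivalent to the weak condition $\Y^i_{-[i]}\le y_{-[i]}$, given that $y$ is optimal for $\emethod{\e(\Y)}$.

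\emph{Strict implies weak.} Perturbations are below $\delta$, so $\Phi(a)_j<\Phi(b)_j$ forces $a_j\le b_j$. For dummy entries the comparison is trivially consistent.

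\emph{Weak implies strict.} Suppose $\Y^i_j=y_j$ for some $j>i$ with $\Y^i\ne y$. By \Cref{cor:bascis}.\ref{item: basic 2} applied in $\Phi(Y_N)$, $\Phi(\Y^i)_k<\Phi(y)_k$. By \Cref{prop:not general position}.\ref{item: not general position 2}, $\Y^i$ is lexicographically smaller than $y$, so it precedes $y$ in the order \eqref{eq:order}. Hence $\Phi(\Y^i)_j<\Phi(y)_j$, since the later-indexed image receives the larger perturbation. I need to double-check that $\Phi(y)_j$ is indeed perturbed here: if $y$ is the first image with that $j$-value it is not, but then $\Y^i$, which precedes it with the same value, contradicts this. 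It also remains to rule out the degenerate case $\Y^i=y$; I would do so by noting that $y$ is infeasible at $\e(\Y)$ since $y_i=\e(\Y)_i$.

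With the equivalence in hand, existence and uniqueness transfer through the bijection. Any true $\Y'$ satisfying the weak condition with $y$ optimal maps to a viable $\Phi(\Y')$ satisfying the strict condition with $\Phi(y)$ optimal. \Cref{cor:unique vc} then gives $\Phi(\Y')=\Phi(\Y)$, hence $\Y'=\Y$.
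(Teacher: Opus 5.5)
Your proposal is correct and follows essentially the paper's route. The paper gives no separate proof; it simply transfers the general-position results of \Cref{sec:general position} to true combinations through the correspondence $\Y\mapsto\Phi(\Y)$, using \Cref{lemma: true combinations opt} and \Cref{lem:scion relation}, and your verification that the strict condition $\Phi(\Y^i)_{-[i]}<\Phi(y)_{-[i]}$ of \Cref{cor:unique vc} matches the weak condition $\Y^i_{-[i]}\le y_{-[i]}$ (via $\Y^i$ preceding $y$ in the order~\eqref{eq:order}) supplies the one step the paper leaves implicit.
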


\begin{remark}
    If the nondominated set is not in general position, there is no one-to-one correspondence between local upper bounds,  defining points and viable/true combinations. The mapping described in \Cref{thm:map to upper bounds} only maps combinations to  so-called \emph{quasi-upper bounds} w.\;r.\;t. $Y_N$ (cf.\ D{\"a}chert et al.~\cite{Daechert2017}). In addition, upper bounds can have multiple defining points and not all correspond to viable/true combinations.
\end{remark}

\section{Algorithm} \label{sec:alg}
In this section, we formally describe PEA for computing the nondominated set of multi-objective integer optimization problems. 
The idea of PEA is as follows:
First, we initialize the dummy images. Then, we start with the true combination $(d^1,\dots,d^{k-1})$, the root of the directed tree induced by the scion order as described in \Cref{thm: scions not in general position}. Then, the directed tree is explored with depth-first search.
A complete listing can be found in \cref{alg:PEA}.

\begin{algorithm}
	\caption{PEA}
	\label{alg:PEA}
	\SetKwInOut{Input}{input}\SetKwInOut{Output}{output}
	\SetKwFunction{KwFn}{exploreSubtree}
	\SetKwProg{Fn}{function}{:}{}
	\Input{Objective function $f$, feasible set $X$.}
	\Output{Nondominated set $Y_N$.}
	\Fn{\KwFn{$\Y=(\Y^1,\dots,\Y^{k-1})$}}{
		
		$Y_N \gets \emptyset$\;
		  $y^*\gets \text{solveModel}(\emethod{\e(\Y)})$\; \label{alg: line 3}
        \If{$\Y^i_{-[i]}\le y^*_{-[i]}$ for all $i\in[k-1]$ \label{alg: line 4}}{
        $Y_N\gets Y_N\cup\{y^*\}$\; \label{alg: line 5}
        } \label{alg: line 6}
        \For{$j\in[k-1]$}{\label{alg: line 7}
		\If{$y^*_j\ge \Y^i_j$ for all $i\in[k-1]\backslash\{j\}$ }{
			$Y_N\gets Y_N \cup$ \KwFn{$\Y^1,\dots,\Y^{j-1},y^*,\Y^{j+1},\dots,\Y^{k-1}$}\; \label{alg: line 8}
		} 
	     } \label{alg: line 9} 	
  \Return $Y_N$\;
	}
	\For{$i\in[k-1]$}{

		$d^i_j\gets\begin{cases} 
			\infty, & i=j \\
			-\infty, & i\ne j
		\end{cases}$, for $j\in[k]$\;
    }
	$Y_N \gets $\KwFn{$d^1,\dots,d^{k-1}$} \label{alg: lin 13}\; 
	\Return $Y_N$\;
	
\end{algorithm}

The correctness of \Cref{alg:PEA} follows directly from \Cref{sec:not general position}. By \Cref{thm:results not in general position}.\ref{item:results1}, the scion order on the set of true combinations defines a tree. \Cref{alg:PEA} first calls the function \texttt{exploreSubtree} for the root $(d^1,\dots,d^{k-1})$ of said tree in  line~\ref{alg: lin 13}, and solves the corresponding lexicographic epsilon-constraint scalarization problem in line~\ref{alg: line 3}. We assume that \texttt{solveModel} invokes a black-box solver that correctly returns the optimal image or that the scalarization is infeasible. Then, \Cref{thm: scions not in general position} guarantees that all scions are identified in lines \ref{alg: line 7}--\ref{alg: line 9} and \texttt{exploreSubtree} is called recursively for each. Thus, \Cref{alg:PEA} goes through all true combinations. Furthermore, by \Cref{thm:results not in general position}.\ref{item:results2}, each nondominated image is added to $Y_N$ in lines \ref{alg: line 4}--\ref{alg: line 6} exactly once.

\begin{theorem}
	PEA generates the entire nondominated set and solves $\mathcal{O}(|Y_N|^{\lfloor\frac{k}{2}\rfloor})$ lexicographic epsilon-constraint scalarization problems.
\end{theorem}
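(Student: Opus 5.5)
The plan is to argue that \Cref{alg:PEA} visits exactly the true combinations, each once, and then to read off both correctness and the complexity bound from \Cref{thm:results not in general position}.

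\textbf{Step 1: the recursion enumerates $\T(Y_N)$.} First observe that the root $(d^1,\dots,d^{k-1})$ is a true combination. Its parameter is $(\infty,\dots,\infty)$, and the same holds for $\Phi$ applied to the dummy images, so it is viable for both $Y_N$ and $\Phi(Y_N)$. Next, every recursive call of \texttt{exploreSubtree} in line~\ref{alg: line 8} is issued exactly under the condition of \Cref{thm: scions not in general position}. By induction on recursion depth, every combination passed to \texttt{exploreSubtree} is therefore a true combination, and the children of a call are precisely its scions. The converse direction uses \Cref{thm:results not in general position}.\ref{item:results1}: every $\Z\in\T(Y_N)$ other than the root has exactly one precursor. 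Following precursors from $\Z$ must reach the root, and this is where I expect the main subtlety. To show this chain terminates (no cycles), use \Cref{cor:bascis}.\ref{item: basic 2} applied in $\Phi(Y_N)$, which is in general position, together with \Cref{lem:scion relation}. A scion replaces one entry by the optimal image, whose $k$-th objective is strictly larger than all entries of the precursor. Hence the multiset of $k$-th values of $\Phi$-images strictly increases in the order used by lexicographic comparison of sorted vectors, and the chain is finite. It follows that the scion graph on $\T(Y_N)$ is an arborescence rooted at $(d^1,\dots,d^{k-1})$. The depth-first recursion then calls \texttt{exploreSubtree} exactly once per true combination; uniqueness of the precursor rules out repeated visits.

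\textbf{Step 2: completeness and no duplicates.} Each call solves one scalarization in line~\ref{alg: line 3} and adds $y^*$ only if the test in line~\ref{alg: line 4} holds. By \Cref{thm:results not in general position}.\ref{item:results2}, for each $y\in Y_N$ there is exactly one true combination $\Y(y)$ at which $y$ is optimal and the test is satisfied. Since every true combination is visited (Step 1), $y$ is added at $\Y(y)$ and at no other call. The images added are optimal images of lexicographic epsilon-constraint problems, which are nondominated. Therefore the returned set is exactly $Y_N$. Infeasible scalarizations, whose optimum corresponds to $d^k$, generate no additions. I would remark that they still generate no scions beyond the tree structure, in line with the treatment of $\E{d^k}$ in \Cref{ex:introductory_vc}.

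\textbf{Step 3: counting.} The number of scalarization problems solved equals the number of calls, which by Step 1 is $|\T(Y_N)|$. By \Cref{thm:results not in general position}.\ref{item:results3} this is in $\mathcal{O}(|Y_N|^{\lfloor\frac{k}{2}\rfloor})$. That bound is itself obtained from the bijection $\T(Y_N)\leftrightarrow\V{\Phi(Y_N)}$, the injection $g$ of \Cref{thm:map to upper bounds} into $U(\Phi(Y_N))$, and $|\Phi(Y_N)|=|Y_N|$. The main obstacle is the acyclicity and reachability argument in Step 1, since \Cref{thm:results not in general position} only gives uniqueness of precursors, not that every true combination is reachable from the root.
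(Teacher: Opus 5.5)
Your proposal is correct and follows essentially the paper's own argument. Correctness comes from \Cref{thm: scions not in general position} together with \Cref{thm:results not in general position}.\ref{item:results1}--\ref{item:results2}, and the bound on the number of scalarizations comes from \Cref{thm:results not in general position}.\ref{item:results3}. Your explicit acyclicity argument is a useful addition: along scion arcs the $k$-th values of the $\Phi$-entries strictly increase, by \Cref{cor:bascis}.\ref{item: basic 2} applied in $\Phi(Y_N)$. This fills in the reachability step that the paper takes for granted when it says the scion order defines a tree, since unique precursors alone do not give that.
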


The true strength of PEA lies in its ability to be parallelized in a straight-forward way:
Once called, \texttt{exploreSubtree} can work independently until the whole subtree is traversed.
Hence, any call to \texttt{exploreSubtree} can be outsourced to a new thread, and this thread can then also schedule the tasks it generates in the recursion to different threads.

The speed-up observed when parallelizing  PEA is quite significant and scales nearly linear for many instances and numbers of threads, see \Cref{sec: numerical study}.

\subsection{Improvements}
It is well known that integer programming solvers typically take longer to prove infeasibility than solving a feasible problem. In addition, providing a feasible solution can speed up the resolution of problems. See also Boland et al.~\cite{Boland2016} and Tamby and Vanderpooten~\cite{Tamby2020} for a discussion on both points. Therefore, in this section, we focus on both: We show how to avoid infeasible scalarization problems unless the multi-objective problem is already infeasible and how to provide a feasible solution  for all but the very first scalarization problem. 

To this end, we show that there exists a subset of nondominated images so that for any lexicographic epsilon-constraint scalarization problem this set contains a feasible image if such an image exists. 

This set can be constructed by applying a permutation to the objectives functions and solving a related problem. This related problem uses lexicographic epsilon-constraint scalarization problem with all $k$ objectives but minimizes them in a different order and constraints on $k-2$ objectives. Since these scalarization problems can be seen as lexicographic epsilon-constraint scalarization problems for $k - 1$ objectives functions, we refer to the
construction of the previously mentioned sets as a $k - 1$ dimensional problem.

This procedure can be applied recursively until we end up with a single-objective problem with one optimal solution, which is then feasible for all scalarization problems needed to solve the two dimensional problem, etc.

The aforementioned sets are the following:
\begin{definition}\label{def:feasible sets}
Let $r\in[k-1]$. We define
    $$Y_N(k)\coloneqq Y_N \text{ and } Y_N(r)\coloneqq \left\{ y\in Y_N(r+1): \nexists \bar{y} \in Y_N \text{ with } \bar{y}_{[r]} \le y_{[r]} \right\}.$$
\end{definition}
By definition, it holds that $Y_N(1)\subseteq \dots \subseteq Y_N(k)=Y_N$.

In order to compute the sets $Y_N(r)$, we permute the objective functions, i.\,e., we apply a permutation $\sigma$ sigma on the objective functions and given a vector $\e\in\mathbb{R}^{k-1}$ solve the following \emph{permuted lexicographic epsilon-constraint scalarization problem} 
\[
\begin{array}{ll}\tag{$\Pi_{\sigma}(\e)$}
	\lexmin & (f_{\sigma(k)}(x),f_{\sigma(k-1)}(x),\dots,f_{\sigma(1)}(x))\\
	\text{s.\,t.} & f_{-\sigma(k)}(x)<\e,\\
	& x \in X. 
\end{array}
\]
For $i\in[k]$, we use $\e_{\sigma(i)}$ to access the constraint on the objective function $f_{\sigma(i)}$. All previous results and definitions still hold as the order on the objectives was arbitrary and just fixed for simpler notation in the first place. 
We adjust the notation as follows: 
We denote the epsilon-component of a nondominated image $y$ by $E_\sigma(Y_N)$ and the set of all true combinations of $Y_N$ by $\T_\sigma(Y_N)$. Furthermore, a true combination $\Y\in\T_\sigma(Y_N)$ has the form $\Y=(\Y^{\sigma(1)},\dots,\Y^{\sigma(k-1)})$ where $\Y^{\sigma(i)}_{\sigma(i)}$ defines $\e(\Y)_{\sigma(i)}$.
We illustrate the sets defined in \Cref{def:feasible sets} and how to compute them using permuted lexicographic epsilon-constraint scalarization problems in the following tri-objective example.
\begin{example} \label{ex:permutations}
    We consider $f=\text{id}$ and
	\begin{align*}
		X=Y= \Big\{ &y^1= (5,4,2)^\top, y^2=(2,6,3)^\top, y^3=(6,2,4)^\top, y^4=(3,3,5)^\top, \\
  &y^5=(2,5,5)^\top,y^6=(5,2,6)^\top\Big\}.
	\end{align*}
    Then, it is $Y_N(2)=\left\{y^4,y^5,y^6\right\}$. We observe that  for all $\e\in\mathbb{R}^2$ the lexicographic epsilon-constraint scalarization $\emethod{\e}$ is feasible, if and only if there is a $y\in Y_N(2)$ that is feasible. Hence, $Y_N(2)$ can be used to avoid infeasible scalarization. Furthermore, since we are ``ignoring'' the third objective function, calculating $Y_N(2)$ can be done by permuting the order on the objective functions and only adjusting the bound on one objective, i.\,e., solving a bi-objective problem (where we still need to take a lexicographic minimum of all objective functions). Consider the permutation 
    $\sigma=(3, 1, 2).$
    Then, for each $y\in Y_N(2)$ there exists an $\e\in\mathbb{R}^2
    $ with $\e_{\sigma(1)}=\e_3=\infty$ such that $y$ is optimal for $\Pi_\sigma(\e)$.
    Additionally, all results from previous sections hold for any order on the objective functions. Hence, for each $y$, we can find a true combination $\Y\in\T_\sigma(Y_N)$ with $\Y^{\sigma(1)}=d^{\sigma(1)}$ such that $y$ is the optimal image. However, which permutation we use is important. Specifically $f_3$ needs to have the lowest priority in the lexicographic minimization. Otherwise,  for example for $\sigma=(1, 3, 2)$ and $\Y=(d^1,d^3)$, i.\,e., unbounded objectives, we have that $y^3$ is optimal for $\Pi_\sigma(\e(\Y))$.
    Even though, $y^3_{[2]}=(6,2)\le (5,2)= y^6_{[2]}$ and, thus, $y^3\notin Y_N(2)$.
    This procedure can be repeated recursively: We can use $Y_N(1)=\{y^5\}$ to avoid infeasible scalarizations when computing $Y_N(2)$. Here, we ``ignore'' two objective functions, use the permutation $\sigma=(3,2,1)$ and do not vary any of the parameters. 
    The epsilon-components w.\,r.\,t.\ to the different permutations are depicted in \Cref{fig:permutations}.
    
\end{example}

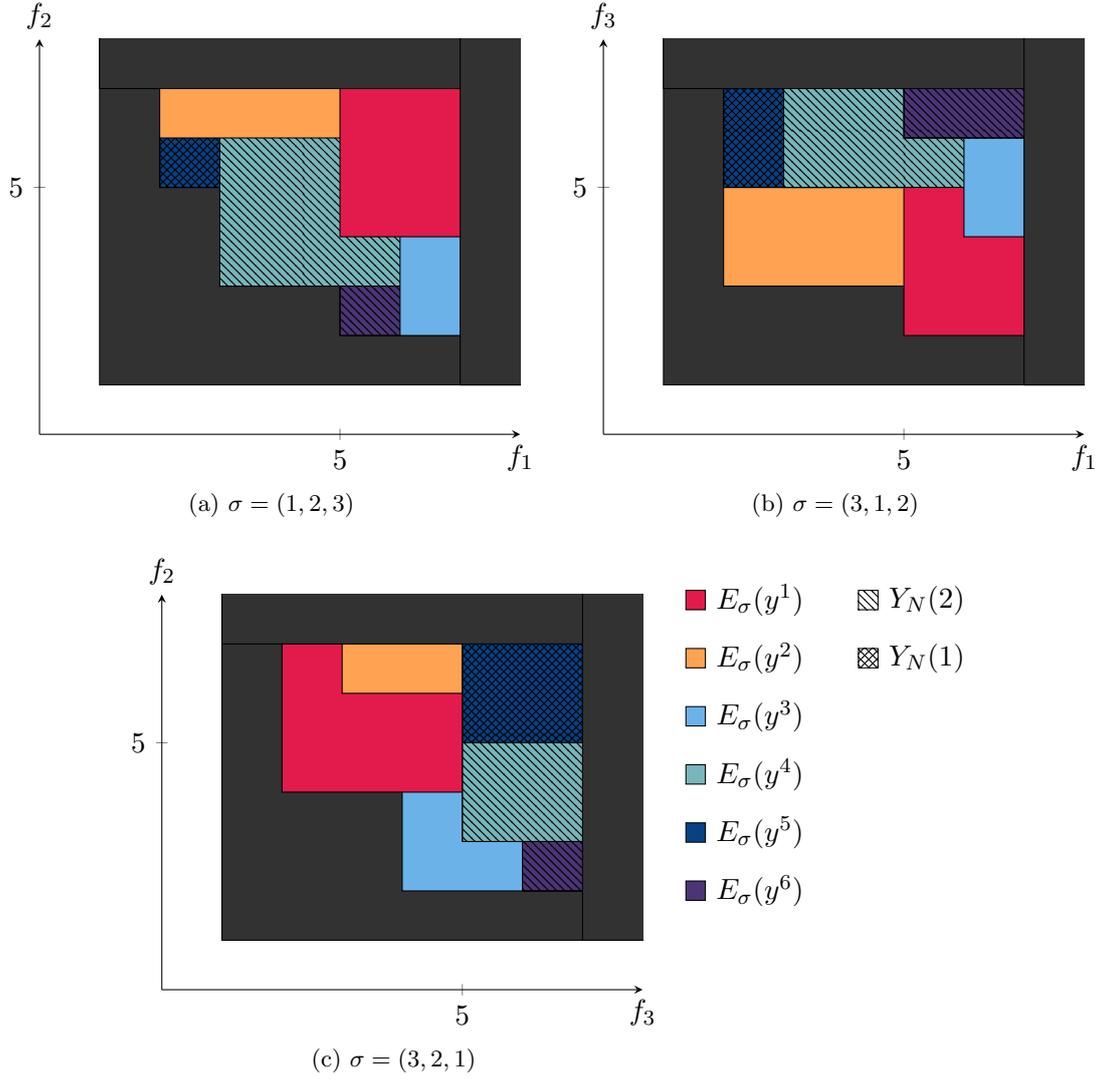
\begin{figure}[!ht]
	\centering
	\subfloat[$\sigma=(1,2,3)$]{
    \begin{tikzpicture}[scale = 1]
			
			\begin{axis}[
				axis lines=left,
				width=225pt,
				xmin=0,xmax=8,
				ymin=0,ymax=8,
				xtick ={5},
				ytick={5},
				xlabel={$f_1$},
				ylabel={$f_2$},	every axis x label/.style={
					at={(axis cs: 8,0,0)},
					anchor=north},
				every axis y label/.style={
					at={(axis cs: 0,8,0)},
					anchor=south
				}]
				\filldraw [fill=dummy,draw=black] (axis cs: 1,1) rectangle (axis cs:19,10);
    
                \filldraw [fill=i6,draw=black] (axis cs: 5,2) rectangle (axis cs: 19,10);
                \filldraw [pattern=north west lines] (axis cs: 5,2) rectangle (axis cs: 19,10);
                
                \filldraw [fill=i5,draw=black,] (axis cs: 2,5) rectangle (axis cs: 19,10);
                \filldraw [pattern=north west lines] (axis cs: 2,5) rectangle (axis cs: 19,10);
                \filldraw [pattern=north east lines] (axis cs: 2,5) rectangle (axis cs: 19,10);
                
                \filldraw [fill=i4,draw=black] (axis cs: 3,3) rectangle (axis cs: 19,10);
                \filldraw [pattern=north west lines] (axis cs: 3,3) rectangle (axis cs: 19,10);
                
                \filldraw [fill=i3,draw=black] (axis cs: 6,2) rectangle (axis cs: 19,10);              
				\filldraw [fill=i2,draw=black] (axis cs: 2,6) rectangle (axis cs: 19,10);
				\filldraw [fill=i1,draw=black] (axis cs: 5,4) rectangle (axis cs: 19,10);
	
				\filldraw [fill=dummy,draw=black] (axis cs: 1,7) rectangle (axis cs: 19,10);
				\filldraw [fill=dummy,draw=black] (axis cs: 7,1) rectangle (axis cs: 19,10);

			\end{axis}

	\end{tikzpicture}
 
    }
	\subfloat[$\sigma=(3,1,2)$]{
    \begin{tikzpicture}
        \begin{axis}[
				axis lines=left,
				width=225pt,
				xmin=0,xmax=8,
				ymin=0,ymax=8,
				xtick ={5},
				ytick={5},
				xlabel={$f_1$},
				ylabel={$f_3$},	every axis x label/.style={
					at={(axis cs: 8,0,0)},
					anchor=north},
				every axis y label/.style={
					at={(axis cs: 0,8,0)},
					anchor=south
				}]
				\filldraw [fill=dummy,draw=black] (axis cs: 1,1) rectangle (axis cs:19,10);
                \filldraw [fill=i2,draw=black] (axis cs: 2,3) rectangle (axis cs: 19,10);
                
                \filldraw [fill=i5,draw=black] (axis cs: 2,5) rectangle (axis cs: 19,10);
                \filldraw [pattern=north west lines] (axis cs: 2,5) rectangle (axis cs: 19,10);
                \filldraw [pattern=north east lines] (axis cs: 2,5) rectangle (axis cs: 19,10);
                
                \filldraw [fill=i1,draw=black] (axis cs: 5,2) rectangle (axis cs: 19,10);
                
                \filldraw [fill=i4,draw=black] (axis cs: 3,5) rectangle (axis cs: 19,10);
                \filldraw [pattern=north west lines] (axis cs: 3,5) rectangle (axis cs: 19,10);
                
                \filldraw [fill=i3,draw=black] (axis cs: 6,4) rectangle (axis cs: 19,10);
                
                \filldraw [fill=i6,draw=black] (axis cs: 5,6) rectangle (axis cs: 19,10);
                \filldraw [pattern=north west lines] (axis cs: 5,6) rectangle (axis cs: 19,10);
	               
				\filldraw [fill=dummy,draw=black] (axis cs: 1,7) rectangle (axis cs: 19,10);
				\filldraw [fill=dummy,draw=black] (axis cs: 7,1) rectangle (axis cs: 19,10);

			\end{axis}

	\end{tikzpicture}
 }\\
 \subfloat[$\sigma=(3,2,1)$]{
    \begin{tikzpicture}
        \begin{axis}[
				axis lines=left,
				width=225pt,
				xmin=0,xmax=8,
				ymin=0,ymax=8,
				xtick ={5},
				ytick={5},
				xlabel={$f_3$},
				ylabel={$f_2$},	every axis x label/.style={
					at={(axis cs: 8,0,0)},
					anchor=north},
				every axis y label/.style={
					at={(axis cs: 0,8,0)},
					anchor=south
				}]
				\filldraw [fill=dummy,draw=black] (axis cs: 1,1) rectangle (axis cs:19,10);
				\filldraw [fill=i3,draw=black] (axis cs: 4,2) rectangle (axis cs: 19,10);
				\filldraw [fill=i1,draw=black] (axis cs: 2,4) rectangle (axis cs: 19,10);
				
				\filldraw [fill=i6,draw=black] (axis cs: 6,2) rectangle (axis cs: 19,10);
				\filldraw [pattern=north west lines] (axis cs: 6,2) rectangle (axis cs: 19,10);
				
				\filldraw [fill=i4,draw=black] (axis cs: 5,3) rectangle (axis cs: 19,10);
				\filldraw [pattern=north west lines] (axis cs: 5,3) rectangle (axis cs: 19,10);
				
				\filldraw [fill=i2,draw=black] (axis cs: 3,6) rectangle (axis cs: 19,10);
				
				\filldraw [fill=i5,draw=black] (axis cs: 5,5) rectangle (axis cs: 19,10);
				\filldraw [pattern=north west lines] (axis cs: 5,5) rectangle (axis cs: 19,10);
				\filldraw [pattern=north east lines] (axis cs: 5,5) rectangle (axis cs: 19,10);
				
				\filldraw [fill=dummy,draw=black] (axis cs: 1,7) rectangle (axis cs: 19,10);
				\filldraw [fill=dummy,draw=black] (axis cs: 7,1) rectangle (axis cs: 19,10);  

			\end{axis}
        
	\end{tikzpicture}
 }
 \subfloat{
\begin{tikzpicture}
    \node[rectangle,fill=i1,draw=black] (y1) at (7,6){};
        \node[right = of y1,xshift=-1cm] {$E_\sigma(y^1)$};
        
        \node[rectangle,fill=i2,draw=black, below= of y1,yshift=0.5cm] (y2) {};
        \node[right = of y2,xshift=-1cm] {$E_\sigma(y^2)$};
        
        \node[rectangle,fill=i3,draw=black, below= of y2,yshift=0.5cm] (y3) {};
        \node[right = of y3,xshift=-1cm] {$E_\sigma(y^3)$};
        
        \node[rectangle,fill=i4,draw=black, below= of y3,yshift=0.5cm] (y4) {};
        \node[right = of y4,xshift=-1cm] {$E_\sigma(y^4)$};
        
        \node[rectangle,fill=i5,draw=black, below= of y4,yshift=0.5cm] (y5) {};
        \node[right = of y5,xshift=-1cm] {$E_\sigma(y^5)$};
        
        \node[rectangle,fill=i6,draw=black, below= of y5,yshift=0.5cm] (y6) {};
        \node[right = of y6,xshift=-1cm] {$E_\sigma(y^6)$};

        \node[draw,rectangle,pattern=north west lines, right= of y1, xshift=1cm] (Y2) {};
        \node[right = of Y2,xshift=-1cm] {$Y_N(2)$};

        \node[draw,rectangle,pattern=crosshatch,below= of Y2,yshift=0.5cm] (Y1) {};
        \node[right = of Y1,xshift=-1cm] {$Y_N(1)$};

        \node[below=of y6, yshift=-0.52cm] {};
\end{tikzpicture}
 }
	\caption{The epsilon-components of the images of \Cref{ex:permutations} for different permutations. The epsilon-components of the images in  $Y_N(2)$ and $Y_N(1)$ are highlighted by different patterns. }
	\label{fig:permutations}
\end{figure}

We now formalize the observations from \Cref{ex:permutations}. First, for all $r\in [k]$ we show how $Y_N(r)$ can be computed. To this end, we use the permutations $\sigma^r$ given by
$$
\sigma^r(j)=\begin{cases}
k-j +1, &\text{if } j\le k-r \\
r-k+j, &\text{if } j > k-r
\end{cases}, \text{ for all } j\in[k].
$$
Hence, $\sigma^r=(k,\;k-1,\; \dots, \;r+1, \; 1,\; 2,\; \dots \;r,)$. Specifically, $\sigma^k=\text{id} = (1,\; \dots,\; k)$ and $\sigma^1 = (k, \;k-1, \;\dots, \; 1)$.

We now show that if we consider permuted problems with $\sigma^r$ and leave $k-r$ objective functions unbounded, we can compute $Y_N(r)$. We do this by applying \Cref{thm:results not in general position}.
\begin{theorem}\label{thm:permutations}
    Let $y^*\in Y_N$ and $r\in[k]$. Then, $y^*\in Y_N(r)$ if and only if there exists a true combination $\Y\in\T_{\sigma^r}(Y_N)$ with $(\Y^{\sigma^r(1)},\dots, \Y^{\sigma^r(k-r)})=(d^{\sigma^r(1)},\dots,d^{\sigma^r(k-r)})$ such that $y^*$ is optimal for 
    $\Pi_{\sigma^r}(\e(\Y))$.
\end{theorem}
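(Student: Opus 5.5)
The plan is to first rewrite $\Pi_{\sigma^r}(\e)$ concretely and then prove the two directions separately, using \Cref{thm:results not in general position}.\ref{item:results2} (applied to the permuted ordering) for the ``only if'' part.

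\emph{Concrete form of $\Pi_{\sigma^r}(\e)$.} By definition of $\sigma^r$, the lexicographic objective is $(f_r,f_{r-1},\dots,f_1,f_{r+1},\dots,f_k)$. The constrained objectives are $f_{\sigma^r(1)},\dots,f_{\sigma^r(k-1)}$, i.e.\ $f_k,\dots,f_{r+1},f_1,\dots,f_{r-1}$. Fixing the first $k-r$ entries of the combination to dummies means $\e_{r+1}=\dots=\e_k=\infty$. So only $f_1,\dots,f_{r-1}$ are bounded.

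I will also use the following reformulation of \Cref{def:feasible sets}: $y\in Y_N(r)$ if and only if for no $s\in\{r,\dots,k\}$ is there a $\bar{y}\in Y_N$ with $\bar{y}_{[s]}\le y_{[s]}$. The case $s=k$ is nondominance.

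\emph{``If'' direction.} Let $y^*$ be optimal for $\Pi_{\sigma^r}(\e(\Y))$ with the first $k-r$ entries dummies. Suppose $\bar{y}\in Y_N$ satisfies $\bar{y}_{[s]}\le y^*_{[s]}$ for some $s\ge r$.
\begin{itemize}
\item \emph{Feasibility of $\bar{y}$.} We have $\bar{y}_i\le y^*_i<\e_i$ for $i<r$, and every other objective is unbounded, so $\bar{y}$ is feasible.
\item \emph{$\bar{y}_{[r]}\ne y^*_{[r]}$.} Then, being componentwise $\le$ and different on $(f_r,\dots,f_1)$, $\bar{y}$ is lexicographically smaller in the first block.
\item \emph{$\bar{y}_{[r]}=y^*_{[r]}$.} Then the first difference lies in $f_{r+1},\dots,f_s$ and favours $\bar{y}$.
\end{itemize}
Either way $\bar{y}$ beats $y^*$, which contradicts optimality. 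Hence $y^*\in Y_N(r)$.

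\emph{``Only if'' direction.} Let $y^*\in Y_N(r)$ and set $\e_i=y^*_i+\delta$ for $i<r$ and $\e_i=\infty$ otherwise.
\begin{itemize}
\item \emph{$y^*$ is optimal for $\Pi_{\sigma^r}(\e)$.} Any feasible $\hat{y}$ has $\hat{y}_i\le y^*_i$ for $i<r$ by the choice of $\delta$. If $\hat{y}$ were lexicographically better, then $\hat{y}_r\le y^*_r$. A strict improvement in the block $(f_r,\dots,f_1)$ would give $\hat{y}_{[r]}\le y^*_{[r]}$. Otherwise the first strict improvement occurs at some $f_s$ with $s>r$, giving $\hat{y}_{[s]}\le y^*_{[s]}$. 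Both contradict the reformulation above.
\item \emph{Existence of a suitable true combination.} Applying \Cref{thm:results not in general position}.\ref{item:results2} in the $\sigma^r$-ordering yields a true combination $\Y\in\T_{\sigma^r}(Y_N)$ for which $y^*$ is optimal. In permuted indices it satisfies $\Y^{\sigma^r(i)}_{\sigma^r(j)}\le y^*_{\sigma^r(j)}$ for all $j>i$.
\end{itemize}

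\emph{Main obstacle: the first $k-r$ entries of $\Y$ are dummies.} For $i\le k-r$ we have $\sigma^r(i)=k-i+1$, and the indices $\sigma^r(j)$ with $j>i$ are exactly $[k-i]$. So if $\Y^{\sigma^r(i)}=\bar{y}$ is not a dummy, then $\bar{y}_{[k-i]}\leqq y^*_{[k-i]}$ with $k-i\ge r$.
\begin{itemize}
\item \emph{$\bar{y}\ne y^*$.} Since both are nondominated, strict inequality on $[k-i]$ holds somewhere only if $k-i<k$. This gives $\bar{y}_{[k-i]}\le y^*_{[k-i]}$ and contradicts $y^*\in Y_N(k-i)\supseteq Y_N(r)$. (For $i=1$ use $Y_N(k-1)$ when $r\le k-1$; the case $r=k$ has no dummies to check.) The care needed here is the equality case $\bar{y}_{[k-i]}=y^*_{[k-i]}$ with $\bar{y}\ne y^*$. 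I would rule it out through the nested definition of the $Y_N(s)$ and the tie-breaking order, i.e.\ the lexicographic comparison on the remaining objectives.
\item \emph{$\bar{y}=y^*$.} Then $\e(\Y)_{\sigma^r(i)}=y^*_{\sigma^r(i)}$, so $y^*$ is infeasible for the strict constraint. This contradicts its optimality.
\end{itemize}
This index bookkeeping in the permuted ordering is where I expect the real care to be needed.
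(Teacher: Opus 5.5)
Your two directions follow the paper's proof. In the ``if'' direction, a $\bar y$ with $\bar y_{[s]}\le y^*_{[s]}$ for some $s\ge r$ is feasible, because only $f_1,\dots,f_{r-1}$ are bounded. It is also lexicographically better with respect to $(f_r,\dots,f_1,f_{r+1},\dots,f_k)$; your case split here is more explicit than the paper's one-line remark for $s>r$. In the ``only if'' direction you take the combination from \Cref{thm:results not in general position}.\ref{item:results2} in the $\sigma^r$-ordering and use $\{\sigma^r(j):j>i\}=[k-i]$ for $i\le k-r$. The first bullet of your ``only if'' part, the optimality of $y^*$ for your hand-made $\e$, is never used and can be dropped.

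The gap is the case you flag but leave open: $\bar y=\Y^{\sigma^r(i)}\in Y_N$ with $\bar y_{[k-i]}=y^*_{[k-i]}$ and $\bar y\ne y^*$. The tools you name do not close it. The nested definition of the $Y_N(s)$ only shows that at the first index $s_0>k-i$ where the two differ, $y^*_{s_0}<\bar y_{s_0}$. That is, $y^*$ wins the tie-break. You already know this from feasibility, since $y^*_{k-i+1}<\e(\Y)_{k-i+1}=\bar y_{k-i+1}$, so no contradiction arises.

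What is missing is the opposite comparison, which comes from the viability of $\Y$:
\begin{enumerate}
\item As a true (hence viable) combination, $\e(\Y)\in\cl{E_{\sigma^r}(\bar y)}$, so there are parameters arbitrarily close to $\e(\Y)$ at which $\bar y$ is optimal.
\item Since $y^*$ satisfies all strict constraints at $\e(\Y)$, it stays feasible at every parameter sufficiently close to $\e(\Y)$.
\item Hence $\bar y$ must be lexicographically smaller than $y^*$. This is the analogue of \Cref{cor:bascis}.\ref{item: basic 2} without general position.
\item But $\bar y$ and $y^*$ coincide on $f_1,\dots,f_{k-i}$, and $y^*_{k-i+1}<\bar y_{k-i+1}$. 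So $y^*$ is lexicographically smaller instead, a contradiction.
\end{enumerate}

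Alternatively, follow the paper. \Cref{thm:results not in general position}.\ref{item:results2} gives $\Y(y)^i_{-[i]}\le y_{-[i]}$ in the vector order $\le$, which by definition excludes equality of the vectors. By rewriting it as componentwise scalar inequalities you discarded exactly this information. If you keep it, $\bar y_{[k-i]}\le y^*_{[k-i]}$ contradicts $y^*\in Y_N(k-i)\supseteq Y_N(r)$ immediately, and the equality case never arises.
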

\begin{proof}

Note that $(\sigma^r(1),\dots,\sigma^r(k-r))=(k,k-1,\dots,r+1)$. 

First, let $y^*\in Y_N(r)\subseteq Y_N$. Thus, by \Cref{thm:results not in general position}, there exists a true combination $\Y(y^*)\in\T_{\sigma^r}(Y_N)$ such that $y^*$ is optimal for $\Pi_{\sigma^r}(\e(\Y^*))$. Additionally, for $i\in[k-1]$ it holds that $$\Y(y^*)^{\sigma^r(i)}_{\sigma^r(i+1),\dots,\sigma^r(k)} \le y^*_{\sigma^r(i+1),\dots,\sigma^r(k)}$$ (this is the equivalent of $\Y(y^*)^i_{-[i]}\le y^*_{-[i]}$ under the permutation $\sigma^r$). Specifically, by the construction of $\sigma^r$, for $i\le k-r$ we get that $Y^{\sigma^r(i)}_{[k-i]}\le y^*_{[k-i]}$. 
That means that $\Y^{\sigma^r(1)}_{[k-1]}\le y^*_{[k-1]},\dots ,\Y^{\sigma^r(k-r)}_{[r]}\le y^*_{[r]}$.
Hence, since $y^*\in Y_N(r)\subseteq Y_N(r+1)\dots \subseteq Y_N(k)$, it holds that $\Y^{\sigma^r(i)}=d^{\sigma^r(i)}$.

Conversely, let $\Y\in\T_{\sigma^r}(Y_N)$ with $$\left(\Y^{\sigma^r(1)},\dots, \Y^{\sigma^r(k-r)}\right)=\left(d^{\sigma^r(1)},\dots,d^{\sigma^r(k-r)}\right)$$ such that $y^*$ is optimal for 
    $\Pi_{\sigma^r}(\e(\Y))$ be given. Suppose  there exists a $y\in Y_N$ with $y_{[r]}\le y^*_{[r]}$. As the objectives $r+1,\dots,k$ are unbounded, this means that $y$ is also feasible for $\Pi_{\sigma^r}(\e(\Y))$.
    Furthermore, $$(y_r,\dots,y_1,y_{r+1},\dots,y_k)\llex(y^*_r,\dots,y^*_1,y^*_{r+1},\dots,y^*_k),$$
   that is,
    $$(y_{\sigma^r(k)},\dots,y_{\sigma^r(1)})\llex (y^*_{\sigma^r(k)},\dots,y^*_{\sigma^r(1)}).$$
    Hence, $y^*$ could not have been optimal for $\Pi_{\sigma^r}(\e(\Y))$ which is a contradiction. In addition, the existence of a $y$ with $y_{[i]}\le y_{[i]}^*$ for some $i>r$ would also contradict the optimality of $y^*$. Thus, $y^*\in Y_N(r)$.
\end{proof}

In the following, for $r\in[k]$, we denote by $\hat{\T}_{\sigma^r}(Y_N)$ the set of all true combinations $\Y\in\T_{\sigma^r}(Y_N)$ with $(\Y^{\sigma^r(1)},\dots, \Y^{\sigma^r(k-r)})=(d^{\sigma^r(1)},\dots,d^{\sigma^r(k-r)})$. Note, that it is $\hat{\T}_{\sigma^k}(Y_N)=\T(Y_N)$. 

We now show for every $r=2,\dots,k$ and all $\Y\in \hat{\T}_{\sigma^r}(Y_N)$ that the corresponding lexicographic epsilon-constraint scalarization $\Pi_{\sigma^r}(\e(\Y))$ is feasible if and only if a $y\in Y_N(r-1)$ is feasible. Hence, as by \Cref{thm:permutations} it is sufficient to consider true combinations in $\hat{\T}_{\sigma^r}(Y_N)$ to compute $Y_N(r)$, we can avoid infeasible scalarizations and obtain feasible solutions to speed up integer programming solvers by iteratively computing $Y_N(1),\dots,Y_N(k)$. 

\begin{theorem}
    Let $r=2,\dots,k$ and $\Y\in\hat{\T}_{\sigma^r}(Y_N)$. Then, $\Pi_{\sigma^r}(\e(\Y))$ is feasible if and only if a $y\in Y_N(r-1)$ is feasible.
\end{theorem}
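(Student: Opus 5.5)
The direction ``$\Leftarrow$'' is immediate. By \Cref{def:feasible sets} we have $Y_N(r-1)\subseteq Y_N$, so any $y\in Y_N(r-1)$ that is feasible for $\Pi_{\sigma^r}(\e(\Y))$ is a feasible image, and the scalarization is feasible. The whole proof therefore rests on the direction ``$\Rightarrow$''. The plan is to exhibit, from any feasible image, one specific feasible image in $Y_N(r-1)$. The candidate is a lexicographic minimizer, taken in the natural order $(f_1,\dots,f_k)$, over the feasible images.

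First I will record the shape of the constraints. Since $\Y\in\hat{\T}_{\sigma^r}(Y_N)$, the components $\Y^{\sigma^r(1)},\dots,\Y^{\sigma^r(k-r)}$ are the dummies $d^k,\dots,d^{r+1}$. Hence the objectives $r+1,\dots,k$ are unbounded. Objective $\sigma^r(k)=r$ is the one being minimized, so it carries no constraint. The only constraints are therefore $y_i<\e(\Y)_i$ for $i\in[r-1]$. The key consequence is the following.
\begin{quote}
\emph{If $y$ is feasible and $\bar y\in Y$ satisfies $\bar y_{[r-1]}\leqq y_{[r-1]}$, then $\bar y$ is also feasible.}
\end{quote}
This monotonicity is the only property of $\hat{\T}_{\sigma^r}(Y_N)$ that the argument uses.

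Now assume the scalarization is feasible, and let $F$ be the finite, nonempty set of feasible nondominated images. It is nonempty because a feasible image is weakly dominated by some element of the finite set $Y_N$, which is then feasible by the monotonicity above. Choose
\[
y\in\arglexmin_{\bar y\in F}(\bar y_1,\dots,\bar y_k).
\]
I then show $y\in Y_N(j)$ for every $j=k,k-1,\dots,r-1$ by unfolding \Cref{def:feasible sets}. This amounts to showing that for every $j\ge r-1$ there is no $\bar y\in Y_N$ with $\bar y_{[j]}\le y_{[j]}$. Suppose such a $\bar y$ exists. Because $j\ge r-1$, we get $\bar y_{[r-1]}\leqq y_{[r-1]}$, so $\bar y$ is feasible and $\bar y\in F$. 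Moreover $\bar y_{[j]}\le y_{[j]}$ means the two vectors differ within the first $j$ coordinates, and at the first such coordinate $\bar y$ is strictly smaller. Hence $(\bar y_1,\dots,\bar y_k)\llex(y_1,\dots,y_k)$, which contradicts the choice of $y$. Thus $y\in Y_N(r-1)$ and $y$ is feasible.

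The main obstacle is getting the nested definition of $Y_N(r)$ right. Membership in $Y_N(r-1)$ requires the condition simultaneously for all $j\ge r-1$, not only for $j=r-1$. This is exactly why the minimizer must be taken lexicographically over all $k$ coordinates in the order $1,\dots,k$, rather than over $f_{[r-1]}$ only. The other point to check is that the monotonicity really uses only the bounds on objectives $1,\dots,r-1$. This needs care at $r=k$, where $\sigma^k=\mathrm{id}$, no objective is unbounded, and the constraints are exactly $f_{-k}<\e$. The conclusion is the same there.
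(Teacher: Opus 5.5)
Your proof is correct, and it handles the nontrivial direction by a different and more self-contained route than the paper.

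The paper first invokes \Cref{thm:permutations} to place the optimal image $y^*$ of $\Pi_{\sigma^r}(\e(\Y))$ in $Y_N(r)$. It then asserts that either $y^*\in Y_N(r-1)$ or some $\bar y\in Y_N(r-1)$ satisfies $\bar y_{[r-1]}\le y^*_{[r-1]}$. It concludes with the same monotonicity observation you use, namely that objectives $r+1,\dots,k$ are unbounded.

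The middle assertion is not justified in the paper. The definition of $Y_N(r-1)$ only produces some $\bar y\in Y_N$ with $\bar y_{[r-1]}\le y^*_{[r-1]}$. Showing that $\bar y$ can be chosen in $Y_N(r-1)$ requires the conditions at every level $j\ge r-1$, and that needs exactly a minimality argument like yours.

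Your lexicographic minimizer in the order $(f_1,\dots,f_k)$ over the feasible nondominated images handles all levels at once. It also makes \Cref{thm:permutations} and the true-combination structure unnecessary. The only property of $\Y\in\hat{\T}_{\sigma^r}(Y_N)$ you use is that $\e(\Y)_i=\infty$ for $i>r$. So your argument proves the statement for every parameter of $\Pi_{\sigma^r}$ with this property. The paper's version is shorter only because it delegates to \Cref{thm:permutations}, which is not really needed for this feasibility statement.

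One small point concerns your claim that every feasible image is weakly dominated by an element of $Y_N$. This does not follow from finiteness of $Y_N$ for arbitrary $X\subseteq\mathbb{Z}^n$. For example, take $X=\{(n,0):n\in\mathbb{Z}_{\ge 0}\}\cup\{(0,1)\}$ with $f(x)=(-x_1,-x_2)$. Then $Y_N=\{(0,-1)\}$, but $(-1,0)$ is not weakly dominated by it. To get $F\neq\emptyset$, rely instead on the standing assumption the paper also uses tacitly: a feasible scalarization has an optimal image. That image is nondominated and therefore lies in $F$.
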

\begin{proof}
Clearly, if a $y\in Y_N(r-1)$ is feasible for $\Pi_{\sigma^r}(\e(\Y))$, $\Pi_{\sigma^r}(\e(\Y))$ is feasible.

Conversely, let $\Pi_{\sigma^r}(\e(\Y))$ be feasible. Then, by \Cref{thm:permutations}, the optimal image $y^*$ is an element of $Y_N(r)$. Consequently, either $y\in Y_N(r-1)$ or there exists a $\bar{y}\in Y_N(r-1)$ with $\bar{y}_{[r-1]} \le y_{[r-1]}$. Thus, since $\e(\Y)_i=\Y^i_i=d^i_i$ for all $i>r$, $\bar{y}$ is feasible for $\Pi_{\sigma^r}(\e(\Y))$.
\end{proof}

\section{Computational Study} \label{sec: numerical study}

To investigate the running time characteristics of PEA in practice, we conduct a small exploratory computational study.

\medskip

PEA is implemented in C++17 and compiled with gcc 14.2.1. 
For solving the lexicographic epsilon-constraint scalarization problems, CPLEX 22.1.1.\ is used.
All CPLEX parameters are kept unchanged, except a lowered MIP tolerance to $10^{-6}$ and limiting the number of threads for CPLEX to one.
Furthermore, we use the oneTBB library~\cite{oneTBB} for the parallelization of PEA.
We provide the code of our PEA implementation under \href{https://gitlab.rhrk.uni-kl.de/xug28mot/pea-parallel-enumeration-algorithm}{gitlab.rhrk.uni-kl.de/xug28mot/pea-parallel-enumeration-algorithm}.
For building PEA with all necessary compiler flags, we also provide a cmake build script there.

\medskip

We compare PEA to an established parallelized algorithm and choose the algorithm AIRA from~\cite{Pettersson2020} as benchmark. 
However, issues arise in the implementation from~\cite{Pettersson2020} when using five or more objectives.
Since a reimplementation is out of scope for our paper, we only compare PEA to AIRA on problems with four objectives.
For a higher number of objectives, we only look at the running times of PEA and how these running times scale with the number of threads used.
We do not compare PEA to sequential algorithms. For tri-objective problems, Prinz and Ruzika~\cite{Prinz2024} already demonstrated that sequential algorithms cannot compete with PEA when multiple threads are available.
\medskip

To test PEA and AIRA, we use instances of the multi-objective knapsack problem (KP) and instances of multi-objective integer linear programs (ILP).
For both classes of instances, we take the instance from the sets provided by Kirlik and Sayın~\cite{Kirlik2015}.
The KP instances as well as the multi-objective assignment problem instances (AP) described in \cite{Kirlik2015} are commonly used in computational studies for multi-objective algorithms, e.\,g., \cite{Kirlik2014,Boland2016,Daechert2024}.
We do not use the AP instances in our study since the aforementioned past studies show that both KP and AP instances have similar running time behavior on a qualitative level.
The number of objectives ranges from $4$ to $10$, with varying sizes (defined by number of variables) of the instances for each number of objectives.
The number of variables ranges for the KP instances from $10$ to $100$, for the ILP instances with four objectives from $10$ to $80$, and for the ILP instances with five or more objectives from $10$ to $50$.
Increments are in steps of size $10$.
For the ILP instances, the number of constraints is exactly half the number of variables.
For each combination of problem, size and number of objectives, ten instances are used.
Newly generated instances are used whenever the original instances sets from~\cite{Kirlik2015} do not contain instances of such size or number of objective, our generation scheme is identical to that in~\cite{Kirlik2015}.
Our complete set of instances can be found under \href{https://gitlab.rhrk.uni-kl.de/xug28mot/pea-parallel-enumeration-algorithm}{gitlab.rhrk.uni-kl.de/xug28mot/pea-parallel-enumeration-algorithm}.

For each single instance, PEA and AIRA were run once for each number of threads~$\tau$ from the set $\{1,4,16,24,64,120\}$, with the exception of AIRA for $64$ and $120$ threads on the  four objective instances. 
This is because AIRA can only use at most $k!$ threads. Therefore, for four objective instances, AIRA is limited to $24$ threads. 
This is also why we use $24$ and $120$ threads, even though they are not powers of two, unlike the rest.
They allow for a direct comparison between PEA and AIRA (ignoring the aforementioned issues with AIRA for five or more objectives).

\medskip

The experiments were run on a compute server with two AMD EPYC 9554 processors, each with 64 physical cores, and 1.51 terrabyte RAM, and Gentoo Linux as operating system.
To explore a huge number of instance and number-of-threads combinations we imposed a rather strict time limit of \nicefrac{1}{2} hours.
Furthermore, we used the following logic to reduce the start of runs that would likely time out:
Let the number of threads $\tau$ that PEA/AIRA uses be fixed.
Furthermore, for a fixed instance size $n$ and number of objectives $k$, let every single run of PEA/AIRA with $\tau$ threads for instances of size $n$ and number of objectives $k$ have timed out.
In this case, we assumed that if we try to solve an instance of larger size or with more objectives with PEA/AIRA and $\tau$ threads,  it would also time out.
We directly skipped such constellations in the computational study and consider them as timed out in our analysis.
To give an example, assume that for PEA with $4$ threads all ILP instances with $10$ variables and $5$ objectives timed out.
Then any ILP instance with at least $10$ or more variables and at least $5$ or more objectives was directly seen as timed out when using $4$ threads.
However, for all runs of PEA with a different number of threads than $4$, this has no effect.
\subsection{Results}
\begin{table}
	\centering
	\resizebox{\textwidth}{!}{\begin{tabular}{llllllllll}  
			\toprule
			\multicolumn{4}{c}{Instance} & $1$ thread & $4$ threads &
			$16$ threads &$24$ threads & $64$ threads & $120$ threads
			\\
			\cmidrule(r){1-4}  \cmidrule(r){5-5} \cmidrule(r){6-6} \cmidrule(r){7-7} \cmidrule(r){8-8} \cmidrule(r){9-9} \cmidrule(r){10-10}
			k & n & $|Y_N|$ & \#P & Time (s) & Time (s) & Time (s) & Time (s) & Time (s) & Time (s)  \\
            \midrule
            4 & 10 & 11.6 & 43.4 & 0.14 & 0.09 & 0.11 & 0.12 & 0.17 & 0.29\\ 
             & 20 & 136.8 & 663.5 & 10.71 & 2.92 & 1.2 & 1.08 & 1.12 & 1.26\\ 
             & 30 & 397.6 & 2012.2 & 53.93 & 14.02 & 4.54 & 3.72 & 2.85 & 2.95\\ 
             & 40 & 1808.6 & 9978.2 & 456.21 & 115.45 & 31.44 & 22.31 & 12.06 & 10.15\\ 
             & 50 & 2881.1 & 16172.0 & (9)  & 218.2 & 57.54 & 40.71 & 20.36 & 15.56\\ 
             & 60 & 6393.8 & 36504.0 & (5)  & (9)  & 168.1 & 114.84 & 51.96 & 37.54\\ 
             & 70 & 15067.5 & 87501.0 & -  & (5)  & (9)  & 385.32 & 160.82 & 109.97\\ 
             & 80 & 25513.4 & 150422.8 & -  & -  & (8)  & (9)  & 329.68 & 218.34\\ 
             & 90 & 26235.3 & 151832.6 & -  & -  & (9)  & 690.6 & 282.99 & 191.97\\ 
             & 100 & (82983.6) & (498594.4) & -  & -  & (1)  & (1)  & (5)  & (9) \\ 
			\midrule
			5  & 10 & 16.2 & 120.6 & 0.44 & 0.17 & 0.17 & 0.18 & 0.24 & 0.34\\ 
			&20 & 161.2 & 1884.4 & 42.41 & 10.87 & 3.38 & 2.67 & 1.92 & 1.91\\ 
			&30 & 1058.7 & 16600.0 & (9)  & 173.55 & 45.01 & 30.97 & 14.60 & 10.88\\ 
			&40 & 4278.4 & 77618.5 & (3)  & (8)  & 335.01 & 225.60 & 94.31 & 63.74\\ 
			&50 & 9990.9 & 183462.4 & -  & (3)  & (9)  & (9)  & 271.45 & 179.13\\ 
			& 60 & (28222.7) & (576806.1) & -  & -  & (1)  & (2)  & (6)  & (7) \\ 
			& 70 & (28961.5) & (557116.0) & -  & -  & (1)  & (3)  & (5)  & (6) \\ 
			& 80 & (28856.0) & (518751.0) & -  & -  & -  & -  & (1)  & (1) \\ 
			& 90 & (59402.0) & (1113779.0) & -  & -  & -  & -  & -  & (1) \\ 
			\midrule
			6 & 10 & 19.7 & 256.6 & 1.44 & 0.45 & 0.29 & 0.29 & 0.35 & 0.47\\ 
			& 20 & 300.7 & 11176.8 & 324.96 & 81.41 & 21.33 & 14.85 & 7.11 & 5.50\\ 
			& 30 & 1927.2 & 92473.3 & (3)  & (8)  & 337.12 & 225.85 & 92.51 & 61.21\\ 
			& 40 & (6920.3) & (384086.4) & -  & -  & (6)  & (6)  & (9)  & (9) \\ 
			& 50 & (16192.0) & (935894.2) & -  & -  & -  & -  & (3)  & (4) \\ 
			& 60 & (15844.0) & (781350.0) & -  & -  & -  & -  & (1)  & (1) \\ 
			\midrule
			7  & 10 & 31.3 & 1230.7 & 13.99 & 3.62 & 1.18 & 0.94 & 0.72 & 0.78\\ 
			& 20 & 459.5 & 38554.7 & (7)  & 352.73 & 89.39 & 60.36 & 25.60 & 17.64\\ 
			& 30 & 4557.7 & 839874.3 & (1)  & (1)  & (2)  & (4)  & (8)  & 650.47\\ 
			& 40 & (7038.7) & (1120506.0) & -  & -  & -  & -  & (2)  & (3) \\ 
			\midrule
			8  & 10 & 29.2 & 2130.2 & 34.38 & 8.73 & 2.45 & 1.79 & 1.11 & 1.05\\ 
			& 20 & 573.6 & 223966.9 & (4)  & (7)  & (9)  & (9)  & 178.98 & 115.80\\ 
			& 30 & (3379.3) & (1188202.3) & -  & -  & -  & (1)  & (2)  & (3) \\ 
			\midrule
			9  & 10 & 39.0 & 28329.5 & (9)  & 138.68 & 34.86 & 23.41 & 9.78 & 6.65\\ 
			& 20 & (691.3) & (588866.3) & -  & (2)  & (7)  & (7)  & (9)  & (9) \\ 
			& 30 & (472.0) & (59723.0) & -  & -  & -  & (1)  & (1)  & (1) \\ 
			\midrule
			10 & 10 & 41.0 & 31658.8 & (9)  & 196.30 & 49.23 & 33.15 & 13.82 & 9.36\\ 
			& 20 & (1108.4) & (1543770.2)  & -  & -  & (1)  & (1)  & (5)  & (5) \\ 
			\bottomrule
	\end{tabular}}
	\caption{%
    Results of PEA on the KP instances.
    The instance size is denoted by $n$, and results are aggregated over all instances of a fixed combination of size and number of objectives.
    The columns $\left|Y_N\right|$ and \#P give the average number of nondominated images and scalarizations solved by PEA, respectively.
    If PEA could not solve an instance within the time limit with any number of threads, both columns are in brackets and give the averages over the remaining instances that could be solved. 
    The columns for the running time give the average running time in seconds or, if the number is in brackets, the number of instances that could be solved within the time limit.%
     \label{table:KP}}
\end{table}
\begin{table}
	\centering
	\resizebox{\textwidth}{!}{\begin{tabular}{llllllllll}  
			\toprule
			\multicolumn{4}{c}{Instance} & $1$ thread & $4$ threads &
			$16$ threads &$24$ threads & $64$ threads & $120$ threads
			\\
			\cmidrule(r){1-4}  \cmidrule(r){5-5} \cmidrule(r){6-6} \cmidrule(r){7-7} \cmidrule(r){8-8} \cmidrule(r){9-9} \cmidrule(r){10-10}
			k & n & $|Y_N|$ & \#P & Time (s) & Time (s) & Time (s) & Time (s) & Time (s) & Time (s)  \\
			\midrule
            4 & 10 & 38.4 & 159.1 & 1.25 & 0.45 & 0.34 & 0.35 & 0.41 & 0.51\\ 
             & 20 & 190.1 & 953.7 & 33.32 & 8.93 & 3.46 & 3.21 & 3.07 & 3.21\\ 
             & 30 & 451.0 & 2457.2 & 287.19 & 73.47 & 21.82 & 18.16 & 13.1 & 12.77\\ 
             & 40 & 571.6 & 3084.2 & (8)  & (8)  & 222.14 & 156.5 & 106.7 & 107.73\\ 
            \midrule
			 5 & 10 & 189.0 & 2642.9 & 27.44 & 7.09 & 2.11 & 1.59 & 1.05 & 1.04\\ 
			& 20 & 684.2 & 10968.3 & (9)  & 160.96 & 43.08 & 29.19 & 14.17 & 12.11\\ 
			& 30 & (506.8) & (7752.7) & (9)  & (9)  & (9)  & (9)  & (9)  & (9) \\ 
			& 40 & (1449.3) & (25291.3) & (4)  & (5)  & (8)  & (8)  & (9)  & (9) \\ 
			\midrule
			 6 & 10 & 134.5 & 4220.4 & 48.21 & 12.36 & 3.61 & 2.71 & 1.72 & 1.62\\ 
			& 20 & 1065.0 & 65453.9 & (6)  & (9)  & (9)  & 199.64 & 83.6 & 57.7\\ 
			& 30 & (2007.9) & (102662.5) & (3)  & (5)  & (6)  & (6)  & (8)  & (8) \\ 
			& 40 & (4085.5) & (275838.5) & (1)  & (3)  & (5)  & (5)  & (5)  & (8) \\ 
			\midrule
			7 & 10 & 402.7 & 54622.5 & (8)  & 196.85 & 49.73 & 33.47 & 13.60 & 8.84\\ 
			& 20 & (2504.9) & (577646.3) & (2)  & (2)  & (5)  & (8)  & (9)  & (9) \\ 
			& 30 & (2467.9) & (398181.7) & (1)  & (3)  & (4)  & (4)  & (6)  & (7) \\ 
			& 40 & (2387.5) & (331054.0) & -  & -  & -  & -  & (2)  & (2) \\ 
			\midrule
            8 & 10 & 544.8 & 274255.3 & (6)  & (7)  & 292.17 & 194.27 & 75.09 & 45.4\\ 
			& 20 & (1705.2) & (724967.0) & -  & (1)  & (4)  & (4)  & (5)  & (6) \\ 
			& 30 & (1578.0) & (634203.0) & -  & -  & (1)  & (1)  & (3)  & (3) \\ 
			& 40 & (1046.5) & (398243.5) & -  & -  & -  & -  & (1)  & (2) \\ 
			\midrule
			9 & 10 & 117.0 & 26342.1 & 477.89 & 119.22 & 30.66 & 20.86 & 8.97 & 6.13\\ 
			& 20 & (955.4) & (1056189.8) & -  & (1)  & (3)  & (3)  & (4)  & (5) \\ 
			\midrule
			10 & 10 & 301.5 & 504702.5 & (4)  & (7)  & (9)  & (9)  & 166.6 & 98.23\\ 
			& 20 & (107.0) & (75623.0) & -  & (1)  & (1)  & (1)  & (1)  & (1) \\  
			
			\bottomrule
	\end{tabular}}
	\caption{%
    Results of PEA on the ILP instances.
    The instance size is denoted by $n$, and results are aggregated over all instances of a fixed combination of size and number of objectives.
    The columns $\left|Y_N\right|$ and \#P give the average number of nondominated images and scalarizations solved by PEA, respectively.
    If PEA could not solve an instance within the time limit with any number of threads, both columns are in brackets and give the averages over the remaining instances that could be solved. 
    The columns for the running time give the average running time in seconds or, if the number is in brackets, the number of instances that could be solved within the time limit.%
    \label{table:ILP}
    }
	
\end{table}

To measure how the running times of PEA and AIRA scale with the number of threads, we cannot use typical measures such a \emph{speedup} or \emph{efficiency}.
Those require the running time of the algorithms when using only a single thread as baseline.
Due to the large sizes of most of the considered instances, no single threaded algorithm would terminate in a reasonable time, even with a much higher running time limit than $\nicefrac{1}{2}$ hours.
Thus, we do not have the running times of PEA/AIRA with a single thread for many instances.
Consequently, we consider the ``inverse'' of the speedup and define the \emph{slowdown} as follows. 
For a fixed instance size, the slowdown measures the average running time of PEA or AIRA with a fixed number of threads relative to the baseline of PEA with 120 threads.
More formally, let $\mathcal{I}_\mathcal{A}(n)$ be the set of all KP/ILP instances of  size $n$ that $\mathcal{A}\in \{\text{AIRA}, \text{PEA} \}$ finished within the running time limit when using using $\tau$ threads.
The slowdown then is defined by
\begin{align*} \label{equation:slowdown}
	sl_{\mathcal{A}}(n)\coloneqq\frac{1}{|\mathcal{I}_\mathcal{A}(n)|} \sum_{I\in\mathcal{I}_\mathcal{A}(n)} \frac{t_\mathcal{A}(I)}{t_{\mathrm{PEA120}}(I)},
\end{align*}
where $t_\mathcal{A}(I)$ is the running time of $\mathcal{A}$ with $\tau$ threads for instance $I$, and $t_{\mathrm{PEA120}}(I)$ is the running time of PEA with 120 threads for instance $I$.
A value of $sl_{\mathcal{A}}(n)=2$, for example, indicates that $\mathcal{A}$ with $\tau$ threads runs on average twice as long as PEA with 120 threads.

We list the average absolute running times of PEA in Table~\ref{table:KP} for KP instances and in Table~\ref{table:ILP} for ILP instances.
For four objectives, the slowdown for PEA and AIRA is visualized in Figure~\ref{fig:4slowdown}, and for PEA for five and six objectives it is visualized in Figure~\ref{fig:6slowdown}.
Note that the cardinality of the nondominated sets varies between instances, in particular for ILP instances.
Interested readers can find the exact running times of PEA and AIRA for each instance under \href{https://gitlab.rhrk.uni-kl.de/xug28mot/pea-parallel-enumeration-algorithm}{gitlab.rhrk.uni-kl.de/xug28mot/pea-parallel-enumeration-algorithm} for a more comprehensive picture. 

\medskip

We first discuss the results for the four objective instances.
For all but the smallest instances, PEA with 120 threads consistently is the fastest combination of algorithm and number of threads.
For any fixed number of threads $\tau$, PEA with $\tau$ threads is faster than AIRA with $\tau$ threads, with the only exception being the single-threaded runs on the ILP instances.
Although PEA does not scale completely proportional to the number of threads, for KP instances every increase in number of threads leads to a clear improvement in running time, and for ILP instances clear improvements show for at least 64 threads. Note that this is because the ILP instances have fewer nondominated images. Thus, less scalarization problems are solved and there is less for PEA to parallelize.
In contrast, AIRA can not provide much improvement in running time above 16 threads on both instance sets.
The results show that PEA outperforms AIRA both in the running time for fixed numbers of threads, and in scaling with the number of threads.

\medskip

For the instances with five or more objectives,  we only discuss how the running time of PEA scales with the number of threads, since we do not have AIRA as benchmark algorithm to compare against here. Overall, PEA scales just as well as it does for four objectives, though more instances time out. In addition, for a high number of objectives, the number of scalarization problems PEA solves is high  even for instances with few nondominated images. Therefore, we can observer clear improvements for up to 120 threads, even on instances with few nondominated images.


\medskip

Altogether, the results show that PEA can take advantage of high numbers of threads.
For many instances, using 120 threads provides a clear improvement in running time.
Compared to computational studies that exist in the literature (e.g.\ in~\cite{,Pettersson2020,Tamby2020,Daechert2024}), we are able to solve far larger instances in reasonable time.
To the best of our knowledge, for the largest of our instances, this is the first time that instances of this size are even considered in a computational study.

\begin{figure}[]
	\subfloat[KP\label{fig:4slowdownKP}]{
		\includegraphics[width=0.86\textwidth,trim={0cm 0.5cm 0cm 0cm},clip]{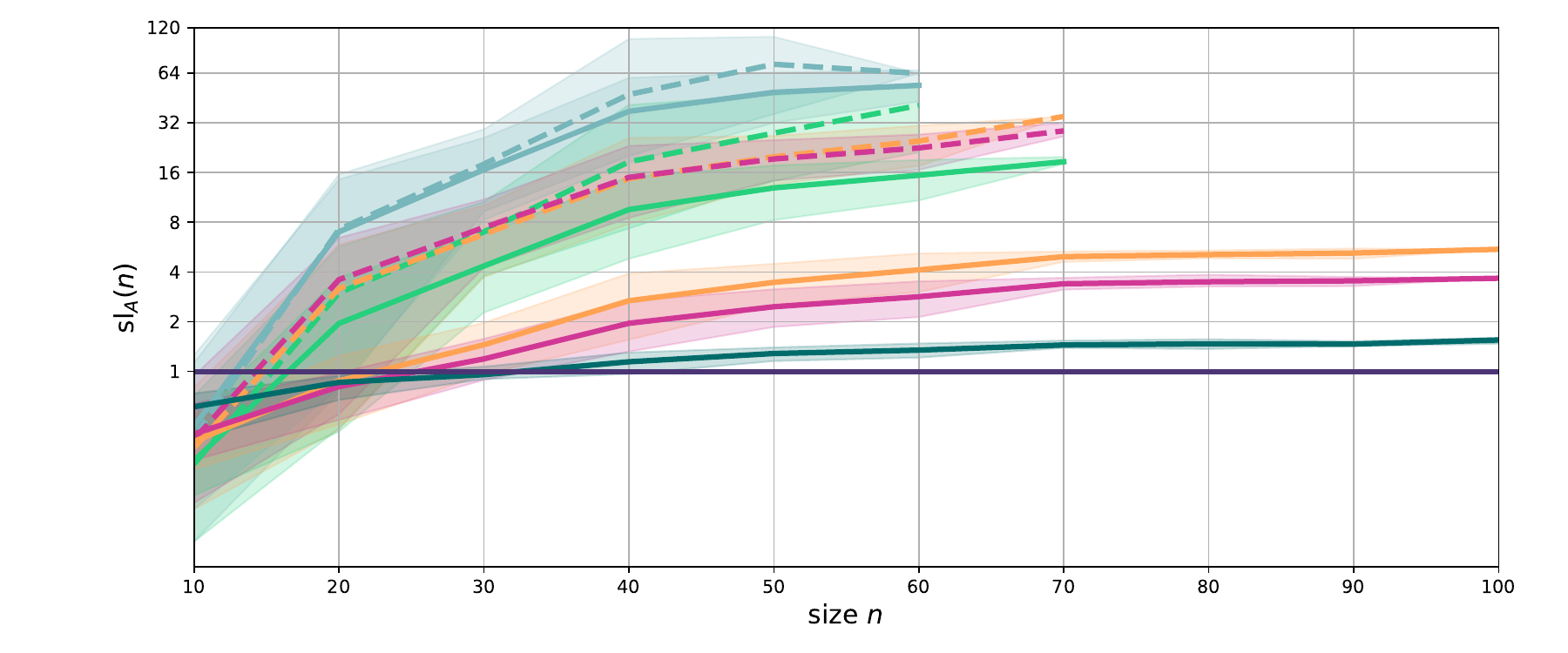}
	} 	
	
	\subfloat[ILP\label{fig:4slowdownILP}]{
		\includegraphics[width=0.86\textwidth,trim={0cm 0.5cm 0cm 0cm},clip]{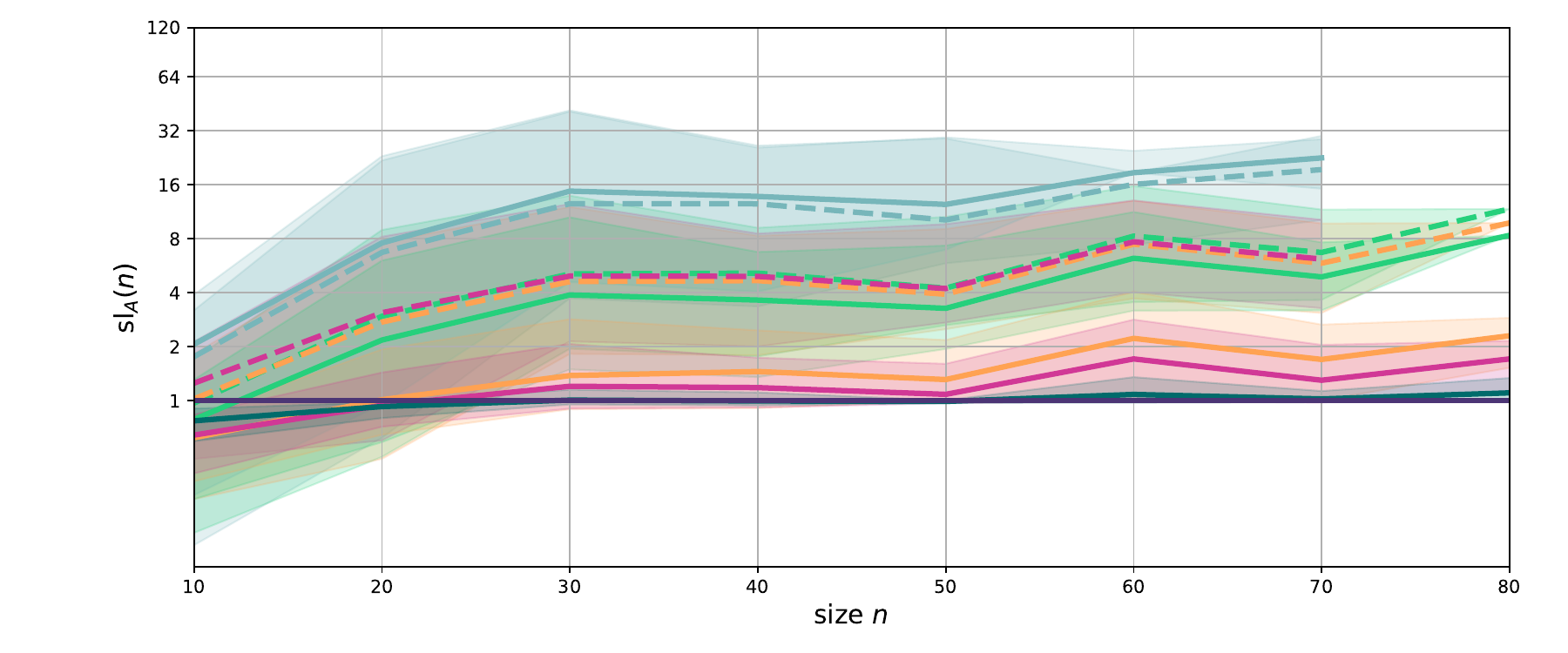}
		
	} 
	\subfloat[legend]{
		\includegraphics[width=0.1\textwidth,trim={0cm -1cm 0cm 0cm},clip]{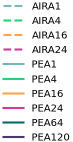}
	}
	\caption{%
    The slowdown of AIRA and PEA for KP and ILP instances with $4$ objectives.
    The naming scheme $\text{PEA}\tau$ (or $\text{AIRA}\tau$) describes the combination of algorithm and number of threads $\tau$.
    Note that the y-axes are scaled logarithmic.
    The shaded area shows the range of the slowdown over all instances  that could be solved within the time limit.
    \label{fig:4slowdown}} 
\end{figure}
\begin{figure}
	\centering
	\subfloat[KP: $k=5$]{
		\includegraphics[width=0.48\textwidth,trim={0cm 0.5cm 0cm 0cm},clip]{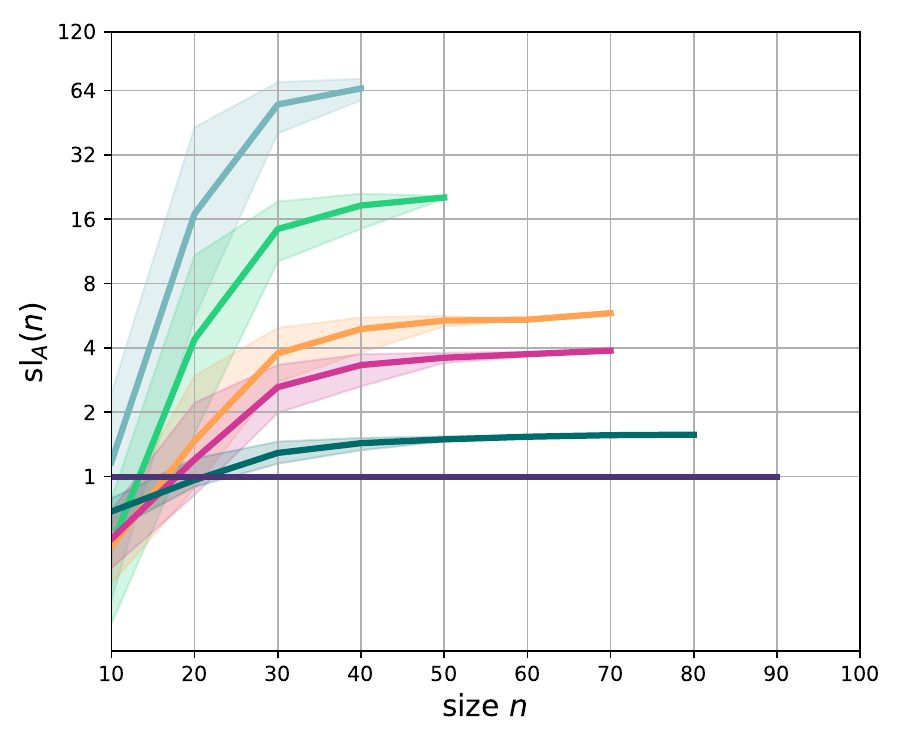}
	} 	
	\subfloat[KP:  $k=6$]{
		\includegraphics[width=0.48\textwidth,trim={0cm 0.5cm 0cm 0cm},clip]{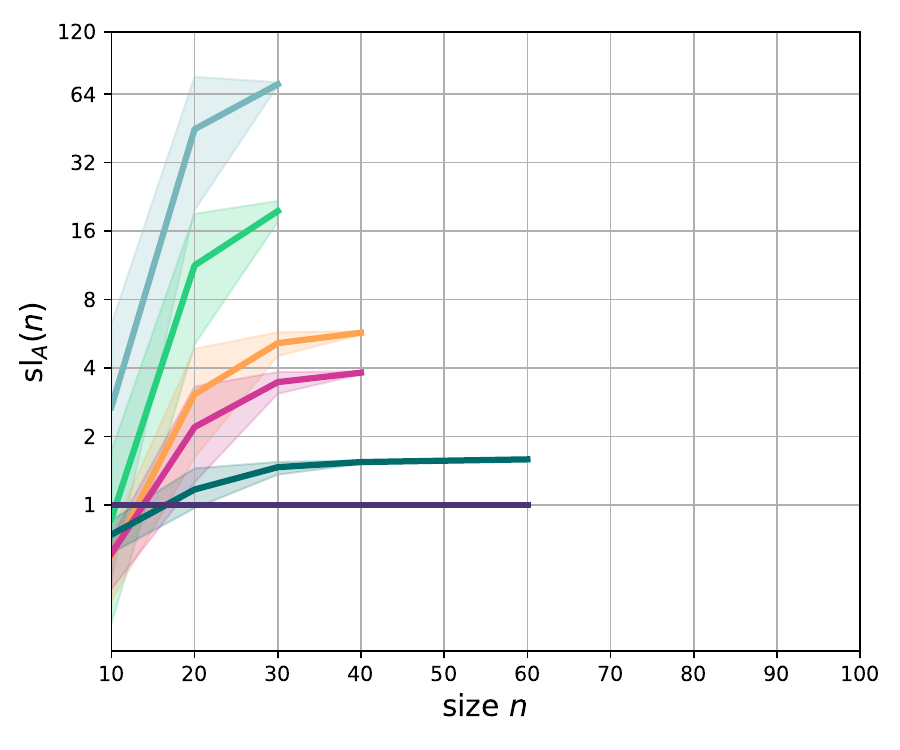}
	}
	
	\subfloat[ILP: $k=5$]{
		\includegraphics[width=0.48\textwidth,trim={0cm 0.5cm 0cm 0cm},clip]{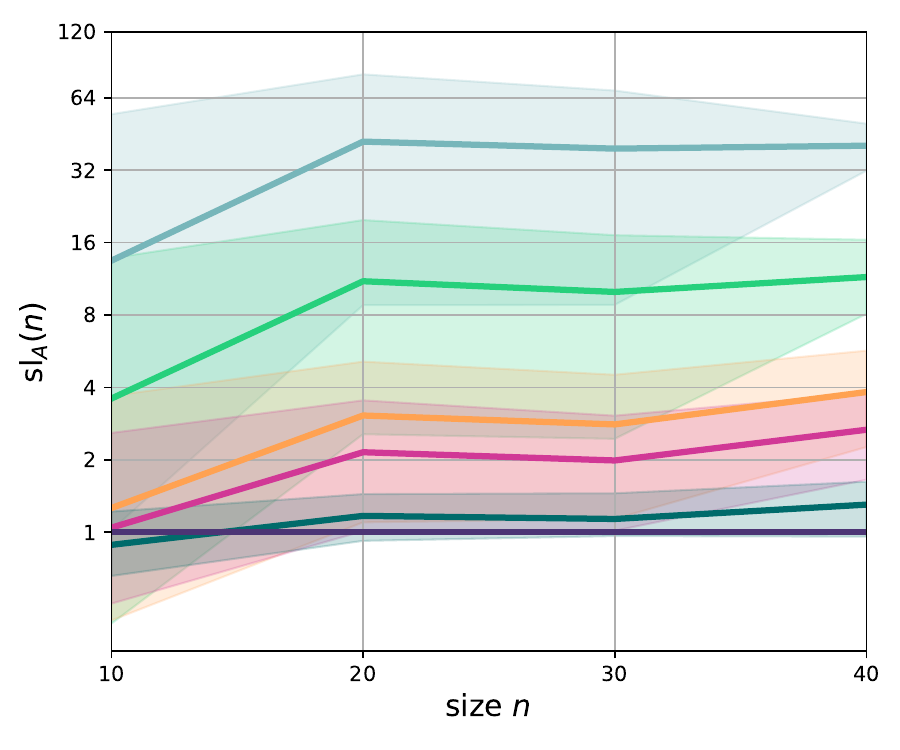}
	}  		
	\subfloat[ILP: $k=6$]{
		\includegraphics[width=0.48\textwidth,trim={0cm 0.5cm 0cm 0cm},clip]{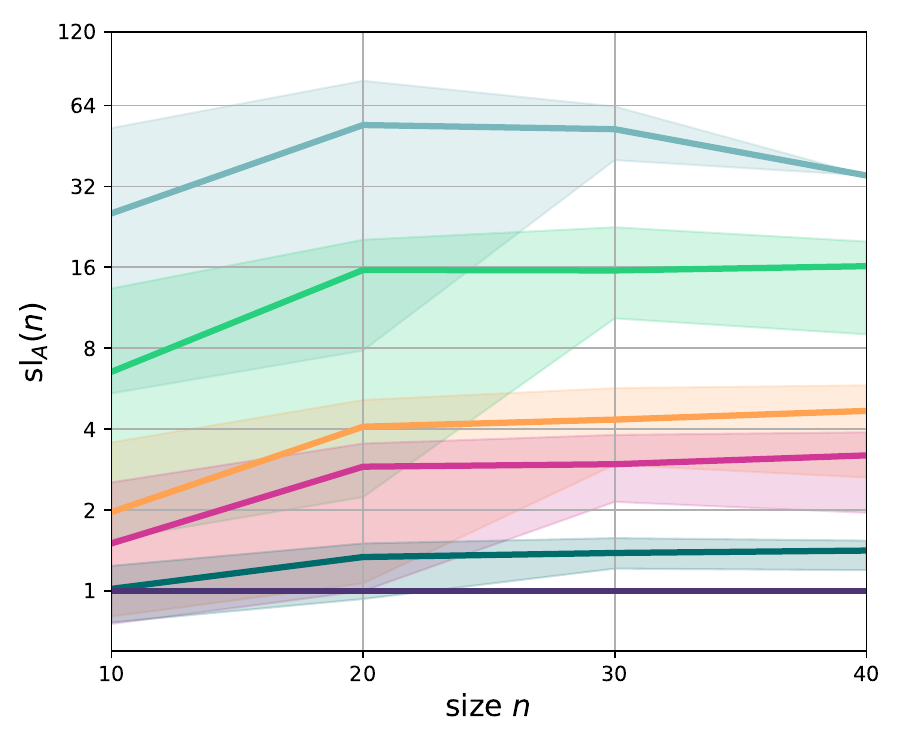}
	} 
	
	\subfloat[Legend.]{
		\includegraphics[width=0.49\textwidth]{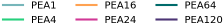}
	} 
	\caption{
    The slowdown of AIRA and PEA for KP and ILP instances with $5$ and $6$ objectives.
    The naming scheme $\text{PEA}\tau$ (or $\text{AIRA}\tau$) describes the combination of algorithm and number of threads $\tau$.
    Note that the y-axes are scaled logarithmic.
    The shaded area shows the range of the slowdown over all instances  that could be solved within the time limit.%
    \label{fig:6slowdown}} 
\end{figure}

\section{Conclusions}
\label{sec:conclusions}

We introduced a new order for parameters of epsilon-constraint scalarizations.
This order arranges the scalarizations in a directed tree.
Traversing this tree is a new algorithmic approach to find all nondominated images of a multi-objective integer problem.
With PEA, we presented the first algorithm using this approach.
The computational study  shows that it greatly speeds up the computation of nondominated images on practical instances.

Hence, the significance of PEA is two-fold:
First, it gives practitioners a new tool to utilize computational resources efficiently and to speed up many real-world applications.
Second, it proposes a new approach for the design of multi-objective optimization algorithms.
We hope that future research is able to build upon PEA to engineer faster variants or to enable it to be used for even more problem classes.
Here, we want to remark that although we described PEA for integer problems, it can be applied to any multi-objective problem with finite nondominated set, as long as a solver for the  lexicographic epsilon-scalarization problems is available.


\section*{Acknowledgments}
The authors gratefully acknowledge the funding by the Deutsche Forschungsgemeinschaft (DFG, German Research Foundation) --- GRK 2982, 516090167 ``Mathematics of Interdisciplinary Multiobjective Optimization'',  the Carl Zeiss Foundation --- Project number P2019-01-005, and the Deutsche Forschungsgemeinschaft (DFG, German Research Foundation) --- Project number 508981269.

\smallskip

\noindent%
In addition, we would like to thank William Pettersson for his technical support with his implementation of AIRA.
\bibliography{references}
\end{document}